\newcommand{\Idt}{\mathcal{I}} 
\newcommand{\T}{\tau} 
\newcommand{\meanv}{\rho} 
\newcommand{\balpha}{\sigma}
\newcommand{\bbeta}{\delta}
\newcommand{\salpha}{\bar \sigma}
\newcommand{\sbeta}{\bar \delta}
\newcommand{\Xk}{\xi}    
\newcommand{\Ak}{\alpha} 
\newcommand{\Bk}{\beta} 
\newcommand{\Ik}{\zeta} 
\newcommand{\sx}{i}   
\newcommand{\sy}{j}   
\newcommand{\staMC}{\pi} 
\newcommand{\valx}{v} 
\newcommand{\valy}{w} 
\newcommand{\valz}{z} 
\newcommand{\sH}{\mathscr{H}}
\newcommand{\new}[1]{{\em #1}\index{#1}}
\newcommand{\comp}{\circ}
\newcommand{\R}{\mathbb{R}}
\newcommand{\inte}[1]{\overset{\;_\circ}{#1}}
\newcommand{\Q}[1]{#1^{\omega}}  
\newcommand{\fvi}[3]{#1(#2;#3)}     
\newcommand{\fvia}[4]{#1(#2;#3,#4)}     
\newcommand{\fr}[1]{\hat{#1}}     
\newcommand{\frvia}[4]{\hat{#1}(#2;#3,#4)}     
\def\tinf#1{\acute{#1}}
\def\tinfens#1{\acute{#1}}
\newcommand{\tfx}[2]{\tinf{#1}_{#2}}  
\newcommand{\tfxvia}[5]{\tinf{#1}_{#2} (#3;#4,#5)}  
\newcommand{\tA}[2]{\tinfens{\A}_{#1,#2}}
\newcommand{\tAm}[3]{\tinfens{\A}_{#1,#2,#3}}
\newcommand{\tB}[3]{\tinfens{\B}_{#1,#2,#3}}
\newcommand{\tBg}[2]{\tinfens{\B}_{#1,#2}}
\def\bg{\bar g}
\newcommand{\sigmaiter}[1]{\balpha_{#1}}
\def\sigmaold{\balpha_{k}}
\def\sigmanew{\balpha_{k+1}}
\def\sigmainit{\balpha_{0}}
\def\deltaold{\bbeta_{k}}
\def\deltanew{\bbeta_{k+1}}
\def\deltainit{\bbeta_{0}}
\def\finit{f^{(\balpha_0)}}
\def\fold{f^{(\balpha_k)}}
\def\fnew{f^{(\balpha_{k+1})}}
\def\bfnew{\overline{f^{(\balpha_{k+1})}}}
\def\vold{v^{(k)}}
\def\vnew{v^{(k+1)}}
\def\Cold{C^{(k)}}
\def\vdemi{v'}
\def\wold{w^{(k)}}
\def\wnew{w^{(k+1)}}
\def\wdemi{w'}
\def\etaold{\eta^{(k)}}
\def\etanew{\eta^{(k+1)}}
\def\tetav{t\eta+v}
\def\tpetav{(t+1)\eta+v}
\newcommand{\norm}[1]{\left\Vert {#1} \right\Vert}        
\newcommand{\vex}{\operatorname{co}}
\newcommand{\sQ}{\mathcal{Q}}
\newcommand{\sF}{\mathcal{F}}
\newcommand{\sT}{\mathcal{T}}
\newcommand{\sB}{\widetilde{B}}
\newcommand{\sG}{\mathcal{G}}
\newcommand{\sP}{\mathcal{P}}
\newcommand{\set}[2]{\{#1\mid\,#2\}}
\newcommand{\gf}{\sG^{\rm f}}
\newcommand{\gc}{\sG^{\rm c}}
\newcommand{\grf}{G} 
\newcommand{\Bl}{\mathcal{B}}
\newcommand{\sE}{\mathbb{E}}       
\newcommand{\X}{{[n]}}
\newcommand{\RX}{\R^n} 
\newcommand{\A}{A}
\newcommand{\B}{B}
\newcommand{\Ag}{A}
\newcommand{\Bg}{B}
\newcommand{\Am}{{\A}_{\mathrm M}}
\newcommand{\Bm}{{\B}_{\mathrm M}}
\newcommand{\proc}[1]{\left( {#1} \right)_{k\ge 0}}              
\newcommand{\MIN}{\text{\sc min}}
\newcommand{\MAX}{\text{\sc max}}
\newtheorem{corollary}{Corollary}
\newtheorem{thm}{Theorem}
\newtheorem{lem}[thm]{Lemma}
\theoremstyle{definition}
\newtheorem{algo}{Algorithm}
\begin{document}

\title{Policy iteration algorithm for zero-sum multichain stochastic games with mean payoff and perfect information}

\author{Marianne Akian, Jean Cochet-Terrasson, Sylvie Detournay, St\'ephane Gaubert}








\maketitle

\begin{abstract}
We consider zero-sum stochastic games with
finite state and action spaces, perfect information,
mean payoff criteria, without any irreducibility assumption on
the Markov chains associated to strategies (multichain games).
The value of such a game can be characterized by a system of nonlinear 
equations, involving the mean payoff vector and an auxiliary vector
(relative value or bias).
We develop here a policy iteration algorithm
for zero-sum stochastic games with mean payoff,
following an idea of two of the authors
(Cochet-Terrasson and Gaubert, C.\ R.\ Math.\ Acad.\ Sci.\ Paris, 2006).
The algorithm relies on a notion of 
nonlinear spectral projection (Akian and Gaubert, Nonlinear Analysis TMA, 2003),
which is analogous to the notion of reduction of super-harmonic functions
in linear potential theory. To avoid cycling, at each degenerate iteration
(in which the mean payoff vector is not improved), the new relative value is obtained by reducing the earlier one.
We show that the sequence of values and relative values
satisfies a lexicographical monotonicity property, which implies that the
algorithm does terminate.
We illustrate the algorithm by a mean-payoff version of Richman games
(stochastic tug-of-war or discrete infinity Laplacian type equation),
in which degenerate iterations are frequent.
We report numerical experiments on large scale instances, 
arising from the latter games, as well as from
monotone discretizations
of a mean-payoff pursuit-evasion deterministic differential game.
\\ 

\noindent {\em \textup{2010} Mathematics Subject Classification}:
91A20; 31C45; 47H09; 91A15; 91A43; 90C40 
\end{abstract}

\section{Introduction}
\paragraph{The mean-payoff problem for zero-sum two player multichain games}
We consider a zero-sum stochastic game with
finite state space $[n]:=\{1,\ldots,n\}$,
finite action spaces $\A$ and $\B$ for the
first and second player respectively, and perfect information.
In the case of the finite horizon problem, in which
the payoff of the game induced by a pair
of strategies of the two players is defined as the expectation of the
sum in finite horizon of the successive rewards
(the payments of the first player to the second player),
Shapley showed (see~\cite{Shapley}) that the value $v^\T_\sx$ of
the game with horizon  $T$ and initial state $\sx\in\X$
 satisfies the \new{dynamic programming equation} 
$v^{T+1}=f(v^T)$, with a dynamic programming
operator $f:\RX\to\RX$ defined as~:
\[ 
[f(v)]_\sx\, = \, \min_{a \in \A } \left(\, \max_{b \in \B} \, \left( \sum_{\sy \in \X}
P_{\sx \sy}^{ab}\, v_\sy \, + \, r_\sx^{a b} \right)\right), \qquad
 \forall \sx \in \X, v\in\RX\enspace.
\] 
Here, $r_\sx^{ab}$ and $P_{\sx\sy}^{ab}$ represent respectively
the reward in state $\sx\in\X$ and the transition probability from state
$\sx$ to state $\sy\in\X$, when the actions of the first and second players are
respectively equal to $a\in \A$ and $b\in \B$.

The above dynamic programming operator $f$ is \new{order-preserving}, 
meaning  that $\valx\leq \valy\implies f(\valx)\leq f(\valy)$ where
$\leq$ denotes the partial ordering of $\RX$, and 
\new{additively homogeneous}, meaning that it commutes
with the addition of a constant vector.
These two conditions imply that $f$ is nonexpansive in the sup-norm 
(see for instance~\cite{crandall}, see also~\cite{sgjg04} for
more background on this class of nonlinear maps).
Moreover, $f$ is \new{polyhedral}, meaning that
there is a covering of $\RX$ by finitely many
polyhedra such that the restriction of $f$ to any of these
polyhedra is affine. 
Kohlberg~\cite{kohlberg} showed that if $f$ is a polyhedral 
self-map of $\R^n$ that is nonexpansive in
some norm, then, there exist two
vectors $\eta$ and $v$ in $\R^n$ such that 
$f(\tetav)= \tpetav$,
for all $t\in\R$ large enough. 
A map of the form $t\mapsto \tetav$ is called
a \new{half-line}, and $\eta$ is its \new{slope}.
It is \new{invariant} if it satisfies the latter property.
Moreover this property is equivalent to the following system of
nonlinear equations for the couple $(\eta,v)$~:
\begin{equation}\label{system}
\left\{ \begin{array}{r l}
\eta & = \fr{f}(\eta)  \enspace,\\
\eta+v  &= \tfx{f}{\eta}(v) \enspace,
\end{array} \right. 
\end{equation}
where the maps $\fr{f}$ (the recession function) and $\tfx{f}{\eta}$ are constructed from $f$  (see Section~\ref{discrete}).

When $f$ has an invariant half-line with slope $\eta$,
the growth rate of its orbits (also called the cycle time)
$\chi(f):=\lim_{k\to\infty} f^k(\valx) /k$ exists and 
is equal to $\eta$. 
Here, $f^k$ denotes the $k$-th iterate of $f$,
and $\valx$ is an arbitrary vector of $\R^n$. 
This shows in particular that the value of the finite horizon
game satisfies $\lim_{T\to\infty} v^T_\sx/T=\eta_\sx$ for any final
reward. Moreover, $\eta_\sx$ gives the value of the game with 
initial state $\sx$, and {\em mean payoff}, that is such that 
the payoff of the game induced by a pair
of strategies of the two players is the 
Cesaro limit of the expectation of the successive rewards.
Then a vector $v$ such that $t\mapsto \tetav$ is an invariant half-line
is called a \new{relative value} of the game, or \new{bias}.
It is not unique, even up to an additive constant.

In this paper, we give an algorithm to find an invariant 
half-line, or equivalently a solution of~\eqref{system}, for general
multichain games. This allows us in particular to determine
the mean payoff, as well as optimal strategies for both players.
By {\em multichain}, we mean that there
is no irreducibility assumption
on the Markov chains associated to the strategies of the two players,
which may have in particular several invariant measures.

\paragraph{Classes of games solvable by earlier policy iteration algorithms}
Policy iteration is a general method initially introduced by 
Howard~\cite{Howard60} in the case of one player problems
(Markov decision processes).
The idea is to compute a sequence of strategies as well as certain
valuations, which serve as optimality certificates, and to
use the current valuation to improve the strategy.
The algorithm bears some resemblance with the Newton method, as the
strategy determines a sub or super-gradient of the dynamic programming
operator. The key of the analysis of policy iteration algorithms is generally
to show that the sequence of valuations which are computed satisfies a monotonicity 
property, from which it can be inferred that the same strategy is never
selected twice.
In the discounted one player case, the valuation which is
maintained by the algorithm is nothing but the value vector of the current
policy. For one-player games
with mean-payoff, in the unichain case (in which 
every stochastic matrix associated to a strategy has only one final
class), the valuation consists of the mean payoff of the current
strategy, as well as of a relative value.
In both cases, the monotonicity property
is natural (it relies on the discrete maximum principle, 
or properties of monotonicity and contraction of the dynamic programming
operator, or on the uniqueness of the invariant measure associated 
to a strategy). However, even for one player games, the extension to the 
multichain case is more difficult. It was initially proposed by
Howard~\cite{Howard60}.
The convergence of his method was established by Denardo and Fox~\cite{Denardo-Fox68}.

The idea of extending Howard algorithm to the two player case appeared
independently in the work of Hoffman and Karp~\cite{hoffman} for 
a subclass of mean-payoff games with imperfect information,
and in the work of Denardo~\cite{Denardo} for discounted games.
Both algorithms consist
of nested iterations; the internal iterations are a simplified 
version of the one player Howard algorithm.
The algorithm for discounted games appeared also,
as an adaptation of the Hoffman-Karp algorithm, 
in the work of Rao, Chandrasekaran, and Nair~\cite[Algorithm~1]{rao} 
and of Puri~\cite{Puri} (deterministic games with perfect information).
More recently,
Raghavan and Syed~\cite{raghavan} developed a related
algorithm in which strategy improvements involve
only one state at each iteration.

The Hoffman-Karp algorithm requires the game to satisfy a strong irreducibility
assumption (each stochastic matrix arising from a choice of strategies 
of the two players must be irreducible).
Without an assumption of this kind, {\em degenerate iterations}, at 
which the mean payoff vector is not improved, may
occur, and so the algorithm may cycle (we shall indeed see such an 
example in Section~\ref{exemple}).
This pathology appears in particular for the important subclass of
deterministic mean payoff games, for which the irreducibility assumption
is essentially never satisfied.

A natural idea to solve mean-payoff games, appearing for instance in the work of Puri~\cite{Puri}, 
is to apply the policy iteration algorithm of Denardo~\cite{Denardo}
or Rao, Chandrasekaran, and Nair~\cite[Algorithm 1]{rao} for 
discounted games, choosing a given discount factor $\alpha$ sufficiently
close to one, which allows one to determine the so-called
{\em Blackwell optimal policies}. For deterministic games, 
when the rewards are integers with modulus less or equal to
$W$ and the number of states is equal to $n$, 
Zwick and Paterson~\cite{ZwickPaterson96} showed
that taking $1-\alpha=1/(4n^3W)$
is sufficient to determine the mean payoff by a rounding argument.
However this requires high precision arithmetics.
In the case of stochastic games, the situation is even worse,
since examples are known in which the value of $1-\alpha$ to be used for
rounding has a denominator exponential in the number  of states.
In particular, an approach of this kind is impracticable if one
works in floating point (bounded precision) arithmetics.
Hence, it is desirable to have a policy iteration algorithm for multichain
stochastic games relying only on the computation of mean payoffs and relative values as in 
the algorithm of Howard~\cite{Howard60}  and Denardo and 
Fox~\cite{Denardo-Fox68}.

The first policy iteration algorithm
not relying on vanishing discount,
for general (multichain) deterministic 
mean payoff games, 
was apparently introduced by
Cochet-Terrasson, Gaubert and Gunawardena~\cite{gg,cras}.
The former reference
concerns the special case in which the mean payoff
is the same for all states at each iteration, whereas
the second one covers the general case, see also~\cite{cochet}.
Details of implementation, as well as experimental results
were given in~\cite{dhingra2}.
The idea of the algorithm of~\cite{gg} is to handle degenerate 
iterations by a tropical (max-plus) spectral projector.
The latter is a tropically linear retraction of the whole space onto the fixed
point set of the dynamic programming operator associated to a given strategy
of the first player.
When the mean payoff or the current strategy is not improved, the new relative value is obtained
by applying a spectral projector to the earlier relative value.
The proof of termination of the algorithm~\cite{gg} relies on a key
ingredient from tropical spectral theory, that a fixed point of a tropically
linear map is uniquely defined by its restriction to the critical nodes (the
nodes appearing infinitely often in a strategy which gives the optimal
mean payoff).
Then, it was shown in~\cite{gg} that at each degenerate iteration,
the relative value decreases, and that the set of critical nodes also
decreases, from which the termination of the algorithm can be deduced.

A related class of games consists of parity games, which can be encoded
as special deterministic games with mean payoff.
A policy improvement algorithm for parity games was introduced by
 V\"oge and Jurdzi\'nski~\cite{jurdzinski}. This algorithm differs
from the one of~\cite{gg,cras} in that instead of the relative value,
the algorithm maintains a set of relevant reachable vertices.
Other policy algorithm for parity games or deterministic mean payoff games
were introduced later on by Bjorklund, 
Sandberg and Vorobyov~\cite{BSV,bjorklund},
and by Jurdzi\'nski, Paterson, and Zwick~\cite{JPZ}.
An experimental comparison of algorithms for deterministic games 
was recently made by Chaloupka~\cite{chaloupkathese,chaloupka}, who also
gave an optimized version of the algorithm of~\cite{gg,cras,dhingra2}.

In~\cite{chancelier2}, Bielecki, Chancelier, Pliska, and Sulem
used a policy iteration algorithm to solve
a semi-Markov mean-payoff game problem with infinite
action spaces obtained 
from the discretization of a quasi-variational inequality,
based on the approach of~\cite{gg,cras}. Their algorithm
proceeds in a Hoffman and Karp fashion.

\paragraph{Policy iteration algorithm for stochastic multichain zero-sum games with mean payoff}
Inspired by the policy iteration algorithm of~\cite{gg,cras}
for deterministic games, 
Cochet-Terrasson and Gaubert proposed in~\cite{CochetGaub}
a policy iteration algorithm for general stochastic games
(see also~\cite{cochet} for a preliminary version).
The relative values are now constructed using
the nonlinear analogues of tropical spectral projectors.
These nonlinear projectors where introduced by Akian and Gaubert
in~\cite{spectral2}. They can be thought of as a nonlinear
analogues of the operation of reduction of a super-harmonic 
function, arising in potential theory.
However, no implementation details were given in the short note~\cite{CochetGaub}, in which the algorithm was stated abstractly,
in terms of invariant half-lines.



We develop here fully the idea of~\cite{CochetGaub},
and describe a policy iteration
algorithm for multichain stochastic games with mean-payoff 
(see Section~\ref{subsection-practical}). We explain
how nonlinear systems of the form~\eqref{system}
are solved at each iteration. We show in particular how
non-linear spectral projections can be computed,
by solving an auxiliary (one player) optimal stopping problem. 
This relies on the determination of the so called
{\em critical graph}, the nodes of which ({\em critical nodes})
are visited infinitely often (almost surely) by an optimal
strategy of a one player mean payoff stochastic game.
An algorithm to compute the critical
graph, based on results on~\cite{spectral2}, is given
in Section~\ref{degenerateCase}. 

We give the proof of the convergence theorem (which 
was only stated in~\cite{CochetGaub}). 
In particular, we show that the sequence $(\etaold,\vold,\Cold)$ 
consisting of the mean payoff vector, relative value vector,
and set of critical nodes, constructed 
by the algorithm satisfies a kind of lexicographical monotonicity property 
so that it converges in finite time (see Section~\ref{convergence}).
The proof of convergence exploits some results of spectral theory of convex order-preserving 
additively homogeneous maps,  by Akian and Gaubert~\cite{spectral2}.
Hence, the situation is somehow analogous to the deterministic case~\cite{gg},
the technical results of tropical (linear) spectral theory used in~\cite{gg}
being now replaced by their non-linear analogues~\cite{spectral2}.
Note also that the convergence proof of the algorithm of~\cite{gg,cras}
can be recovered as a special case of the present proof.

The convergence proof leads to a coarse exponential bound on the execution time
of the algorithm: the number of iterations of the first player is
bounded by its number of strategies, and the number of elementary
iterations (resolutions of linear systems) is bounded by
the product of the number of strategies of both players.

We also show that the specialization of this algorithm
to a one-player game gives an algorithm which is
similar to the multichain policy algorithm of Howard and Denardo and Fox,
see Section~\ref{sec-detailed-sub1}. 

Then, we discuss an example 
(see Section~\ref{exemple})
involving a variant of Richman games~\cite{loeb} (also called stochastic tug-of-war~\cite{peres}, related with discretizations
of the infinity Laplacian~\cite{oberman}), showing that degenerate iterations do occur
and that cyclic may occur with naive policy iteration rules.
Hence, the handling of degenerate iterations,
that we do here by nonlinear spectral projectors,
cannot be dispensed with.

The present algorithm has been implemented in the C library {\sc PIGAMES} by Detournay, see~\cite{detournayThese} for more information. We finally report numerical experiments (see Section~\ref{numerical})
carried out using this library, 
both on random instances of Richman type games with various numbers of states and on a class of discrete games arising from
the monotone discretization of a pursuit-evasion differential game.
These examples indicate that degenerate iterations are frequent,
so that their treatment cannot be dispensed with.
They also show that the algorithm scales well, allowing
one to solve structured instances with $10^6$ nodes and $10^7$ actions
in a few hours of CPU time on a single core processor (the bottleneck
being the resolution of linear systems).

We note that our experimental are consistent
with earlier experimental tests carried out for
simpler algorithms (dealing with one player or deterministic problems)
of which the present one is an extension.
These tests indicate that policy iteration algorithms are 
fast on typical instances (although instances with an exponential
number of iterations have been recently constructed, as
discussed in the next subsection).
Indeed, in the case of one-player deterministic games (maximal circuit mean
problem), Dasdan, Irani and Guptka~\cite{dasdan98} concluded that the instrumentation of Howard's policy iteration algorithm by Cochet-Terrasson et al.~\cite{ccggq98}, in which each iteration is carried out in linear time, was the fastest
algorithm on their test suite. Dasdan latter on developed further
optimizations of this method~\cite{dasdan}.
More recent experiments by Georgiadis,
Goldberg, Tarjan, and Werneck~\cite{georgiadis} have indicated that the class
of cycle based algorithms (to which~\cite{ccggq98,dasdan} belongs) is among the best performers, close second to the tree based method of Young, Tarjan,
and Orlin~\cite{young}. In the deterministic two player case,
Chaloupka~\cite{chaloupka} 
compared several algorithms and observed that the one of~\cite{gg,cras,dhingra2}, with the optimization
that he introduced (see also~\cite{chaloupkathese}), is 
experimentally the best performer.

\paragraph{Alternative algorithms and complexity issues}
Gurvich, Karzanov and Khachiyan~\cite{gurvich} were
the first to develop a combinatorial algorithm ({\em pumping algorithm}) to solve zero-sum deterministic games
with mean payoff. An alternative approach
was developed by Zwick and Paterson~\cite{ZwickPaterson96}, who showed that
such a game can be solved
by considering the finite horizon game for a sufficiently
large horizon, and applying a rounding argument.
Both algorithms are pseudo-polynomial.
Other algorithms, also pseudo-polynomial, based
on max-plus (tropical) cyclic projections, 
with a value iteration flavor,
have been developed
by Butkovi\v{c} and Cuninghame-Green~\cite{CGB-03}, Gaubert and Sergeev~\cite{gauser}, and Akian, Gaubert, Niti\c{c}a and Singer~\cite{AGNS10}.
Deterministic mean payoff games have been recently proved
to be equivalent to decision problems for tropical polyhedra
(the tropical analogue of linear programming)~\cite{AGGut10}.
More generally, the results there show that {\em stochastic}
games problems with mean payoff can be cast as tropical convex
(non-polyhedral) programming problems. 

The pumping algorithm of~\cite{gurvich} was recently extended
to the case of stochastic games with perfect information
by Boros, Elbassioni, Gurvich, and Makino~\cite{gurvich3}.
They showed that their algorithm is pseudo-polynomial
when the number of states of the game at which
a random transition occurs remains fixed. 
No pseudo-polynomial seems currently known without
the latter restriction.
Their algorithm applies to more general games than
the ones covered by the irreducibility assumption of Hoffman and Karp in~\cite{hoffman}, but it does not apply to all multichain games.

The question of the complexity of deterministic mean payoff games was raised
in~\cite{gurvich}, and it has remained open since that time.
Note in this respect that such games are known
to have a good characterization in the sense of Edmonds,
i.e., to be in NP$\cap$coNP. Indeed, the strategies of one player
can be used as concise certificates, as observed by Condon~\cite{condon2},
Paterson and Zwick~\cite{ZwickPaterson96}. Such games even belong
to the class UP$\cap$coUP as shown
by Jurdzi\'nski~\cite{Jur98}. 
We refer the reader to the discussion in~\cite{BSV,JPZ}
for more information. The arguments of Condon~\cite{condon2}
also imply that zero-sum stochastic games with
perfect information (and finite state and action spaces) 
belong to NP$\cap$coNP. An important subclass
of deterministic games with mean payoff consists
of parity games. These can be reduced to mean payoff
deterministic games (Puri~\cite{Puri}), which in turn can be reduced 
to discounted deterministic games. The latter ones can be reduced to 
simple stochastic games (Zwick and Paterson~\cite{ZwickPaterson96}).
In~\cite{AndMilt09}, Andersson and Miltersen generalized this result
showing that stochastic mean payoff games with perfect information,
stochastic parity games and simple stochastic games are polynomial
time equivalent. 
In particular, the decision problem corresponding to a game of any of these
classes lies in the complexity class of NP$\cap$coNP.  

Friedmann has recently constructed an example~\cite{Friedmann} showing that the
V\"oge-Jurdji\'nsky strategy improvement
algorithm for parity games~\cite{jurdzinski}
may require an exponential number of iterations.
This also yields an exponential lower bound~\cite{Friedmannlong}
for the Hoffman-Karp strategy improvement rule
for discounted deterministic games~\cite{Puri}. 
The result of Friedmann has also been extended to total
reward and undiscounted MDP by Fearnley~\cite{fearnley2,fearnley} and
to simple stochastic games and weighted discounted stochastic games
by Andersson~\cite{andersson2009}. 

Moreover, for Markov decision process with a {\em fixed} discount factor,
some upper bound on the number of policy iterations was
given in~\cite{HolzbaurMeister85}. Recently, Ye
gave a 
the first strongly polynomial bound~\cite{Ye2005,ye2010simplex}. The latter bound has been improved and
generalized to zero-sum two~player stochastic games with perfect
information factor by Hansen, Miltersen and Zwick
in~\cite{hansen2011strategy}, 
again for a fixed discount factor,
giving the first strongly polynomial
bound for these games. Note that a polynomial
bound for mean payoff games does not follow from these results
(to address the mean payoff case, we need to consider
the situation in which the discount factor tends to $1$).


Complexity results of a different nature have been established
with motivations from numerical analysis (discretizations
of PDE), exploiting in particular the relation between policy iteration
and the Newton method. The policy iteration
algorithm for one-player discounted games
with an infinite number of actions has been proved
to have a superlinear convergence around the 
solution under suitable assumptions
(see in particular the works of Puterman and 
Brumelle~\cite{PutBrum}, Akian~\cite{aki90b}, and Bokanowski, Maroso, and 
Zidani~\cite{zidaniBokanowski09}).
Chancelier,  Messaoud, and Sulem~\cite{chancelier} also considered,
in view of their application to quasi-variational inequalities, 
partially undiscounted infinite horizon problems for which they proved 
the contraction of the policy iteration algorithm.

The plan of the paper is the following: Section~\ref{discrete} is recalling
some background on stochastic zero-sum two player games,
Section~\ref{reduced} explain the construction of the nonlinear projection,
Section~\ref{section-PIgames} gives the algorithm, 
its practical version and its proof,
Section~\ref{sec-detailed} gives the ingredients of the algorithm, 
Section~\ref{exemple}
shows an example with possible cycling of iterations when not using the notion
of spectral projector, and Section~\ref{numerical} is for the numerical experiments.

\section{Two player zero-sum stochastic games with discrete time and mean payoff}
\label{discrete}

The class of two player zero-sum stochastic games was first introduced
by Shapley in the early fifties, see~\cite{Shapley}. We recall in this
section basic definitions on these games in the case of finite state
space and discrete time (for more details see \cite{Shapley,FilarVrieze,sorin}).

We consider the finite state space $\X := \{1, \dots, n\}$.  A
stochastic process $\proc{\Xk_k}$ on $\X$ gives the state of the game
at each point time $k$, called stage. At each of these stages, two 
players, called ``\MIN'' and  ``\MAX'' (the minimizer and the maximizer)
 have the possibility to influence the course of the game. 

The \new{stochastic game} $\Gamma(\sx_0)$ starting from $\sx_0 \in \X$ is played
in stages as follows. The initial state  $\Xk_0$ is equal to $\sx_0$ and known by
the players. Player \MIN\ plays first, and chooses an action
$\Ak_0$ in a set of possible actions $\A_{\Xk_0}$. Then the second player,
\MAX, chooses an action $\Bk_0$ in a set of possible actions
$\B_{\Xk_0}$. The actions of both players and the current state
determine the payment $r_{\Xk_0}^{\Ak_0\Bk_0}$ made by \MIN\ to \MAX\ and the
probability distribution $\sy\mapsto P_{\Xk_0\sy}^{\Ak_0\Bk_0}$ of the new state
$\Xk_1$. Then the game continues in the same way with state $\Xk_1$ and so
on.

At a stage $k$, each player chooses an action knowing the \new{history}
defined by $\Ik_k = (\Xk_0,\allowbreak \Ak_0,\allowbreak \Bk_0, \allowbreak \cdots,\allowbreak \Xk_{k-1}, \allowbreak\Ak_{k-1},\allowbreak \Bk_{k-1},
\Xk_k)$ for \MIN\ and $(\Ik_k, \Ak_k)$ for \MAX.  
We call a \new{strategy} or \new{policy} for a player, a rule which tells him the
action to choose in any situation. There are several classes of
strategies. 
Assume $\A_\sx \subset \Ag$ and $\B_\sx \subset \Bg$ for some sets $\Ag$ and $\Bg$.
A \new{behavior or randomized strategy} for \MIN\ (resp. \MAX) is
a sequence $\salpha := (\balpha_0, \balpha_1, \cdots)$
(resp. $\sbeta := (\bbeta_0, \bbeta_1, \cdots)$)  
where $\balpha_k$ (resp. $\bbeta_k$) is a map which to a history
$h_k = (\sx_0, a_0, b_0,
\dots, \sx_{k-1}, a_{k-1}, b_{k-1}, \sx_k)$ with $\sx_\ell \in \X$, $a_\ell \in
\A_{\sx_\ell}$, $b_\ell\in \B_{\sx_\ell}$ for $0\le \ell\le k$ (resp.\ $(h_k, a_k)$) at stage $k$ associates a probability distribution on a probability space over $\Ag$
(resp.\ $\Bg$) which support is included in the possible actions space 
 $\A_{\sx_k}$ (resp. $\B_{\sx_k}$).  
A \new{Markovian strategy} is a strategy which only depends on
the information of the current stage $k$: $\balpha_k$
(resp. $\bbeta_k$) depends only on $\sx_k$ (resp. $(\sx_k, a_k$)),  
then $\balpha_k(h_k)$ (resp. $\balpha_k(h_k, a_k)$) will be denoted
$\balpha_k(\sx_k)$ (resp. $\bbeta_k (\sx_k,a_k)$).  
It is said \new{stationary} if it is independent of $k$, then $\balpha_k$ is
also denoted by $\balpha$ and $\bbeta_k$ by $\bbeta$. 
A strategy of any type is said \new{pure} if for any stage $k$, the values
of $\balpha_k$ (resp. $\bbeta_k$) are Dirac probability measures at
certain actions in $\A_{\sx_k}$  (resp. $\B_{\sx_k}$) then we 
denote also by $\balpha_k$ (resp. $\bbeta_k$) the map which to the
history assigns the only possible action in $\A_{\sx_k}$ (resp. $\B_{\sx_k}$).

In particular, if $\salpha$ is a pure Markovian stationary strategy,
also called \new{feedback} strategy,
then $\salpha = \proc{\balpha_k}$ with $\balpha_k = \balpha$ for all
$k$ and $\balpha$ is a map $\X \rightarrow \Ag$ such that $\balpha (\sx)
\in \A_{\sx}$ for all $\sx\in\X$. In this case, we also speak about pure
Markovian stationary or feedback strategy for $\balpha$ and 
we denote by $\Am$ the
set of such maps. We adopt a similar convention for player \MAX~: $\Bm
:= \set{\bbeta : \X\times\Ag \rightarrow \Bg}{\bbeta(\sx,a) \in \B_{\sx}
  \, \forall \sx \in \X, \, a\in\A_{\sx}}$.

A strategy $\salpha=\proc{\balpha_k}$ (resp. $\sbeta=\proc{\bbeta_k}$) together with an initial state determines stochastic processes $\proc{\Ak_k}$ for the actions of \MIN, $\proc{\Bk_k}$ for the actions of \MAX\ and $\proc{\Xk_k}$ for the states of the game such that 
\begin{subequations}\label{probG1}
\begin{eqnarray}
P(\Xk_{k+1} = \sy \, | \,  \Ik_k = h_k, \Ak_k = a, \Bk_k = b) &\,=\, &
P_{\sx \sy}^{ab} \\ 
P(\Ak_{k} \in A' \, | \,  \Ik_k = h_k) &=& \balpha_k(h_k)(A') \\ 
P(\Bk_{k} \in B' \, | \,  \Ik_k = h_k, \Ak_k = a) &=& \bbeta_k(h_k,a)(B') \enspace,
\end{eqnarray}
\end{subequations}
where $\Ik_k := (\Xk_0, \Ak_0, \Bk_0, \dots, \Xk_{k-1},
\Ak_{k-1}, \Bk_{k-1} \Xk_k)$ is the history process,
$h_k$ is a history vector at time $k$:
$h_k=(\sx_0, a_0, b_0, 
\dots, \sx_{k-1}, a_{k-1}, b_{k-1}, \sx)$ and  $A'$
(resp.\ $B'$) are measurable sets in $\Ag$ (resp.\ $\Bg$). 
For instance, for each pair of feedback strategies ($\balpha$, $\bbeta$) of the two players, that is such that for $k\ge0$~:
$\balpha_k = \balpha$ with $\balpha \in \Am$ and $\bbeta_k = \bbeta$
with $\bbeta \in \Bm$, the state process 
$\proc{\Xk_k}$ is a Markov chain on $\X$ with transition probability 
\[
P(\Xk_{k+1}=\sy \, | \,  \Xk_k = \sx)\,=\, P^{\balpha(\sx)\bbeta(\sx,\balpha(\sx))}_{\sx \sy}
 \quad \text{ for } \sx, \sy \in \X\enspace,
\]
and  $\Ak_k = \balpha(\Xk_k)$ and $\Bk_k = \bbeta(\Xk_k,\Ak_k)$.

When the strategies $\salpha$ for \MIN\ and $\sbeta$ for \MAX\ are fixed,
the \new{payoff in finite horizon} $\T$ of the game $\Gamma(\sx, \salpha, \sbeta)$ starting from $\sx$ is 
\[
J^\T(\sx, \salpha, \sbeta)\,=\, \sE^{\salpha \sbeta}_{\sx} \left[ \, \sum_{k = 0}^{\T-1} r_{\Xk_k}^{\Ak_k \Bk_k} \,\right], 
\]
where $\sE^{\salpha, \sbeta}_{\sx}$ denotes the expectation for the
probability law determined by \eqref{probG1}. 
The \new{mean payoff} of the game $\Gamma(\sx, \salpha, \sbeta)$ starting
from $\sx$ is  
\[
J(\sx, \salpha, \sbeta)\,=\, \limsup_{\T \rightarrow \infty }\,
\frac{1}{\T} \, J^\T(\sx, \salpha, \sbeta).
\]
When the action spaces  $\A_\sx$ and $\B_\sx$ are finite sets
for all $\sx\in\X$,
the finite horizon game and the mean payoff game have a \new{value} which is
given respectively by:
\begin{equation} \label{valueG-MP-finite}
v^\T_\sx \,=\, \inf_{\salpha} \sup_{\sbeta} \, J^\T(\sx, \salpha, \sbeta),
\end{equation}
and
\begin{equation} \label{valueG-MP}
\meanv_\sx \,=\, \inf_{\salpha} \sup_{\sbeta} \,J(\sx, \salpha, \sbeta),
\end{equation}
for all starting state $\sx \in \X$,
where the infimum is taken among all strategies $\salpha$ for \MIN\
and the supremum is taken over all strategies $\sbeta$ for \MAX\
(see \cite{Shapley} for finite horizon games, and \cite{liggettlippman}
for mean payoff games).

Indeed, the value $v^\T$ of the finite horizon game satisfies the 
\new{dynamic programming equation}~\cite{Shapley}:
\begin{equation} \label{eq1}
v^{\T+1}_\sx\, = \, \min_{a \in \A_\sx } \left(\, \max_{b \in \B_\sx} \, \left( \sum_{\sy \in \X}
P_{\sx \sy}^{ab}\, v^{\T}_\sy \, + \, r_\sx^{a b} \right)\right), \qquad
 \forall \sx \in \X,
\end{equation}
with initial condition $v^0_\sx=0,\; \sx \in \X$.
Moreover, optimal strategies are obtained for both players by taking pure Markovian strategies $\salpha$ for \MIN\ and $\sbeta$ for \MAX\ such that, for all 
$k=0,\ldots,\T-1$, and $\sx$ in $\X$, $\balpha_k(\sx)$ attains the minimum in \eqref{eq1} with $\T$ replaced by $\T-k-1$, and that, for  all 
$k=0,\ldots,\T-1$,  $\sx$ in $\X$ and $a$ in $\A_{\sx}$,
$\bbeta_k(\sx,a)$ attains the maximum in the expression of $\fvia{F}{v^{\T-k-1}}{\sx}{a}$ defined as follows:
\begin{equation} \label{ILPI}
\fvia{F}{v}{\sx}{a} \, = \, \max_{b \in \B_\sx} \, \left( \sum_{\sy \in \X}
P_{\sx \sy}^{ab}\, \valx_\sy \, + \, r_\sx^{a b} \right).
\end{equation}

We denote by $f$ the \new{dynamic programming or Shapley operator}
from $\RX$ (that is here equivalent to $\R^\X$) to itself given by:
\begin{equation} \label{eq1opdef}
[f(v)]_{\sx}:=\fvi{F}{v}{\sx} \, := \, \min_{a \in \A_\sx } \, \fvia{F}{v}{\sx}{a}
, \qquad  \forall \sx \in \X,\; v\in \RX.
\end{equation}
Then, the dynamic programming equation of the finite horizon game writes:
\begin{equation} \label{eq1op}
v^{\T+1}\, = \, f(v^\T).
\end{equation}

The operator $f$ is \new{order-preserving}, 
i.e. $\valx\leq \valy\implies f(\valx)\leq f(\valy)$ where
$\leq$ denotes the partial ordering of $\RX$
($\valx\leq \valy$ if $\valx_{\sx}\leq \valy_{\sx}$ for all $i\in [n]$), and 
\new{additively homogeneous}, i.e. it commutes
with the addition of a constant vector, which means that 
$f(\lambda+\valx)=\lambda+f(\valx)$ for all $\lambda\in\R$ and
$\valx\in\RX$, where
$\lambda+\valx=(\lambda+\valx_{\sx})_{\sx\in \X}$.
This implies that $f$ is nonexpansive in the sup-norm 
(see for instance~\cite{crandall}).
Note that it was
observed independently by Kolokoltsov~\cite{kolokoltsov}, 
by Gunawardena and Sparrow (see~\cite{mxpnxp0})
and by Rubinov and Singer~\cite{singer00} that,
conversely, if $f:\RX\to\RX$ is order-preserving and 
additively homogeneous, then $f$ can be put in the form 
{\rm (\ref{ILPI},\ref{eq1opdef})}, with possibly infinite sets $A_i$ and $B_i$.

When the action spaces  $\A_\sx$ and $\B_\sx$ are finite sets
for all $\sx\in\X$, the map $f$ is also \new{polyhedral}, meaning that
there is a covering of $\RX$ by finitely many
polyhedra such that the restriction of $f$ to any of these
polyhedra is affine. 
Kohlberg~\cite{kohlberg} showed that if $f$ is a polyhedral 
self-map of $\RX$ that is nonexpansive in
some norm, then, there exist two
vectors $v$ and $\eta$ in $\RX$ such that 
$f(\tetav)= \tpetav$,
for all $t\in\R$ large enough. 
A map $\omega:t\in [t_0,\infty)\mapsto \tetav\in\RX$,
with $t_0\in\R$, and $\eta,v\in\RX$, is called a \new{half-line}
with slope $\eta$.
A \new{germ of half-line at infinity} is an equivalence class for the 
equivalence relation on half-lines $\omega\sim\omega'$ if
$\omega(t)=\omega'(t)$ for $t\in\R$ large enough.
A germ can be identified with the couple $(\eta,v)$ of vectors of $\RX$.
Hence, in the sequel, we shall use the expression ``half-line''
either for a map $\omega:t\in [t_0,\infty)\mapsto \tetav\in\RX$,
for its germ, or for the  couple $(\eta,v)$.
We shall say that it is \new{invariant} by $f$ if it
satisfies the latter property, that is $f(\tetav)= \tpetav$,
for all $t\in\R$ large enough.
The interest of an invariant half-line is that
its slope determines the growth
rate of the orbits of $f$,
$\chi(f):=\lim_{k\to\infty} f^k(\valy) /k$.
Here, $f^k$ denotes the $k$-th iterate of $f$,
and $\valy$ is an arbitrary vector of $\RX$. 
When it exists, the growth rate $\chi(f)$ is called 
the \new{cycle time} of $f$.
Indeed, if $f(\tetav)= \tpetav$ for $t\geq t_0$, then
$f^k(t_0\eta+v)=(t_0+k)\eta+v$ for $k\geq 0$, hence
$\lim_{k\to\infty} f^k(t_0\eta+v) /k=\eta$,
and by the nonexpansiveness of $f$, 
$\lim_{k\to\infty} f^k(\valy) /k=\eta$ for all $\valy \in\RX$,
that is $\chi(f)$ does exist and is equal to $\eta$.
For the game problem this shows that the value of the finite
horizon game has a linear growth with respect to time:
\[ 
\lim_{\T \rightarrow \infty }\, \frac{1}{\T} \,
v^\T_\sx=[\chi(f)]_\sx=\eta_\sx \enspace, 
\]
where $f$ is the Shapley operator defined in~{\rm (\ref{ILPI},\ref{eq1opdef})}.
Moreover, the value $\meanv$ of the mean payoff game defined
in~\eqref{valueG-MP}  coincides with the slope of an invariant 
half-line of $f$, and thus with the former limit:
\[
\meanv_\sx= \eta_\sx=[\chi(f)]_\sx.
\]

Finally, when the action spaces are finite, one can easily see that
the Shapley operator $f$  in~{\rm (\ref{ILPI},\ref{eq1opdef})} satisfies
for all  $\eta,v\in \RX$,
\begin{equation}  \label{Shapley-inv}
f(\tetav)=t \fr{f}(\eta)+\tfx{f}{\eta}(v)\quad\text{ for $t$ large,}
\end{equation}
where $\fr{f}$ is the \new{recession function} of $f$ (see~\cite{sgjg04}):
\begin{align}
[\fr{f}(\eta)]_{\sx}:=& \lim_{t\to\infty} \frac{[f(t\eta)]_{\sx}}{t}=
\, \min_{a \in \A_\sx } \, \max_{b \in \B_\sx} \, \left( \sum_{\sy \in \X}
P_{\sx \sy}^{ab}\, \eta_\sy \right), \quad \sx\in \X\enspace, \label{def-frec}
\end{align}
and $\tfx{f}{\eta}$ is what we shall call
the \new{tangent of $f$ at infinity} around the slope $\eta$:
\begin{subequations} \label{eta-sets}
\begin{align}
[\tfx{f}{\eta}(v)]_{\sx}:=&  \lim_{t\to\infty} [f(\tetav)-t \fr{f}(\eta)]_{\sx}
= \, \min_{a \in \tA{\sx}{\eta} } \, \max_{b \in \tB{\sx}{a}{\eta} } \, \left( \sum_{\sy \in \X}
P_{\sx \sy}^{ab}\, \valx_\sy \, + \, r_\sx^{a b} \right) \enspace, \label{def-ftg}
\end{align}
with
\begin{align}
\tA{\sx}{\eta} &:= \underset{a \in \A_\sx}{\operatorname{argmin}} 
\;\left\{ \max_{b \in   \B_\sx} \, \left( \sum_{\sy \in \X}
P_{\sx \sy}^{ab}\, \eta_\sy \right)  \right\}  \label{eta-setsA} \\ 
\tB{\sx}{a}{\eta} &:= \underset{b \in \B_\sx}{\operatorname{argmax}} \; \left\{
 \sum_{\sy \in \X} P_{\sx \sy}^{ab}\, \eta_\sy \right\} \enspace.
\end{align}
\end{subequations}
Indeed, for an action $a \in \A_{\sx}$ and $\sx\in \X$, we have
from the finiteness of the sets $\B_\sx$~:
\begin{align*} 
  \fvia{F}{\tetav}{\sx}{a} \, & = \, \max_{b \in \B_\sx} \, \left( \sum_{\sy \in \X}
P_{\sx \sy}^{ab}\, (t\eta_\sy+v_\sy) + \, r_\sx^{a b}  \right) \\
&= \, \max_{b \in \B_\sx} \, \left( t \sum_{\sy \in \X} P_{\sx \sy}^{ab}\, \eta_\sy 
+  P_{\sx \sy}^{ab}\, v_\sy + \, r_\sx^{a b}  \right) \\
& = \, \max_{b \in \B_\sx} \, \left( t \sum_{\sy \in \X}
P_{\sx \sy}^{ab}\, \eta_\sy  \right)  + \max_{b \in \tB{\sx}{a}{\eta}} \, \left( 
\sum_{\sy \in \X}
P_{\sx \sy}^{ab}\,v_\sy +\, r_\sx^{a b}  \right) \qquad \text{for $t$ large}\\
& = \, t \, \frvia{F}{\eta}{\sx}{a}+ \tfxvia{F}{\eta}{v}{\sx}{a}
\end{align*}
where one denotes~:
\begin{align} 
\frvia{F}{\eta}{\sx}{a} \, &:= \, \max_{b \in \B_\sx} \, \left( \sum_{\sy \in \X}
P_{\sx \sy}^{ab}\, \eta_\sy \right) \label{def-frec-a} \\
\tfxvia{F}{\eta}{v}{\sx}{a} \,&:= \, \max_{b \in \tB{\sx}{a}{\eta}} \, \left( \sum_{\sy \in \X}
P_{\sx \sy}^{ab}\, \valx_\sy \, + \, r_\sx^{a b} \right)\enspace .
\label{def-ftg-a}  
\end{align}
Then, using the finiteness of the sets $\A_\sx$, and
\begin{equation*} 
  [f(\tetav)]_\sx=\fvi{F}{\tetav}{\sx} \,  = \min_{a \in \A_\sx } \, \fvia{F}{\tetav}{\sx}{a} \enspace ,
\end{equation*}
one obtains Equation~\eqref{Shapley-inv}.

From~\eqref{Shapley-inv}, we deduce easily that 
$(\eta,v)$ is an invariant half-line of $f$ if, and only if,
it satisfies:
\begin{equation}\label{DPsyst2P1} 
\left\{ \begin{array}{r l}
\eta & = \fr{f}(\eta)  \enspace,\\
\eta+v  &= \tfx{f}{\eta}(v) \enspace.
\end{array} \right. 
\end{equation}
This couple system of equations is what is solved in practice,  when
one looks for the value function $\meanv=\eta$ of the mean payoff game.

\section{Reduced super-harmonic vectors}\label{reduced}
We next present the non-linear analogue of a result of classical
potential theory, on which the policy iteration algorithm  for mean
payoff games relies.
Recall that a self-map  $f$ of $\RX$ is  order-preserving
if $\valx\leq \valy\implies f(\valx)\leq f(\valy)$, where
$\leq$ denotes the partial ordering of $\RX$, and that 
it is additively homogeneous if it commutes
with the addition of a constant vector.
More generally, it is  \new{additively subhomogeneous}, if
$f(\lambda+\valx)\leq \lambda +f(\valx)$ for all $\lambda\geq 0$ and
$\valx\in\RX$.
It is easy to see that an order-preserving  self-map  $f$ of $\RX$
is additively subhomogeneous if, and only if, it is
nonexpansive in the sup-norm.  (See for instance~\cite{sgjg04} 
for more background on order-preserving additively homogeneous maps.)

We shall now recall some definitions
and results of~\cite{spectral2}, 
where the corresponding proofs
can be found, up to an extension from additively homogeneous maps
to subhomogeneous maps as in~\cite[\S 1.4]{spectral2}. 
To show the analogy with potential theory, we 
shall say that a vector $u\in \RX$ is \new{harmonic}
with respect to an order preserving, additively (sub)homogeneous map
$g$ of $\RX$ if it is a fixed point of $g$, i.e.\ if $g(u)=u$, 
and that it is
\new{super-harmonic} if $g(u)\leq u$. 
(\cite{spectral2} deals more generally with additive eigenvectors and 
super-eigenvectors).
We shall denote by $\sH(g)$ and $\sH^+(g)$ the set of
harmonic and super-harmonic vectors respectively.

We say that a  self-map $g$ of $\RX$ is \new{convex} if all its 
coordinates $g_{\sx}:\RX\to\R$ are convex functions.
Then, the \new{subdifferential}
of $g$ at a point $u\in\RX$ is defined as
\[\partial g(u):=\set{M\in \R^{n\times n}}{g(\valx)-g(u) \geq M(\valx-u),\;\forall \valx\in\RX}\enspace .\]
Hence, 
\begin{equation}\label{subdiffequiv}
\partial g(u)=\set{M\in \R^{n\times n}}{M_{i.}\in \partial g_{\sx}(u)}\enspace,
\end{equation}
where $M_{i.}$ denotes the $i$-th row of the matrix $M$.
It can be checked that when $g$ is order-preserving and additively homogeneous
(resp.\ subhomogeneous), 
$\partial g(u)$ consists of stochastic (resp.\ substochastic) matrices,
that is matrices with nonnegative entries and row sums equal to $1$ 
(resp.\ less or equal to $1$), see~\cite[Cor.\ 2.2 and (4)]{spectral2}.
Assume $g$ has a harmonic vector $u$.
We say that a node is \new{critical} if it belongs
to a recurrence class of some matrix $M\in \partial g(u)$,
where a recurrence class of $M$ means a (final) communication class $F$ 
of $M$ such that the $F\times F$ submatrix of $M$ is stochastic
(note that a recurrence class may not exist if $g$ is not additively
homogeneous), see~\cite[\S 2.3 and 1.4]{spectral2}.
One defines also the critical graph $\gc(g)$ of $g$ as the union 
of the graphs of the $F\times F$ submatrices of the matrices
$M\in \partial g(u)$, such that $F$ is a recurrence class of $M$.
The set of critical nodes and the critical graph of $g$  are
independent of the choice
of the harmonic vector $u$~\cite[Prop.\ 2.5]{spectral2}. Indeed,
when $g$ arises from a stochastic control problem with ergodic reward,
a node is critical iff it is recurrent for some stationary optimal strategy.

If $I$ is any subset of $[n]$,  we denote by $r_I$ the
restriction from $\RX$ to $\R^I$, such that $(r_I \valx)_{\sx}:=\valx_{\sx}$,
for all $i\in I$. For all $u\in \RX$, we define $u_I:=r_I u$,
and for all self-maps $g$ of $\RX$, we define $g_I:=r_I\comp g$.
Let $J:=[n]\setminus I$.
We denote by $\imath_I$ the canonical map identifying $\R^I\times \R^J$
to $\RX$, which sends $(\valy,\valz)$ to the vector $u$ such that $u_{\sx}=\valy_{\sx}$
for all $i\in I$ and $u_{\sx}=\valz_{\sx}$ for all $i\in J$.
Then, the transpose $r_I^*$ of $r_I$ is the
map from $\R^I$ to $\RX$ such that $r_I^*(\valy)=\imath_I(\valy,0)$.
Finally, for all $I,J\subset [n]$,
and for all $n\times n$ matrices $M$, we denote by 
$M_{IJ}$ the $I\times J$ submatrix of $M$.


\begin{lem} \label{lem-1}
Let $g$ denote a convex, order preserving,
and additively homogeneous self-map of $\RX$.
Assume that $u\in\RX$ is harmonic with respect to $g$.
Denote by $C$ the set of critical nodes of $g$ and by $N=[n]\setminus C$
its complement in $[n]$.
Then, the map $h:\R^N\to\R^N$ with
$h(\valy):= (r_N\comp g\comp \imath_N) (\valy,u_C)$
has a unique fixed point.
\end{lem}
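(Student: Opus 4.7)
The plan is to establish existence by direct verification and uniqueness by a discrete maximum principle that exploits the convexity of $g$ together with the spectral theory of~\cite{spectral2}.

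For existence, since $\imath_N(u_N, u_C) = u$ by construction, we have $h(u_N) = (r_N \comp g \comp \imath_N)(u_N, u_C) = r_N(g(u)) = u_N$, using that $u$ is harmonic.

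For uniqueness, I would suppose that $y$ and $y'$ are two fixed points of $h$ and set $v := \imath_N(y, u_C)$ and $v' := \imath_N(y', u_C)$. These vectors satisfy $v_C = v'_C = u_C$ and $g(v)_N = v_N$, $g(v')_N = v'_N$. Assuming $v \neq v'$, after possibly swapping, one has $\lambda := \max_i(v_i - v'_i) > 0$, with argmax set $I := \{i : v_i - v'_i = \lambda\}$. Since $v_C = v'_C$, the set $I$ is contained in $N$. For any $M \in \partial g(v)$, the subdifferential inequality gives $g(v) - g(v') \leq M(v - v')$ coordinatewise. Because $g$ is order-preserving and additively homogeneous, $M$ is a stochastic matrix, so for $i \in I \subset N$ the chain of inequalities $\lambda = g(v)_i - g(v')_i \leq \sum_j M_{ij}(v_j - v'_j) \leq \lambda$ forces equality, which in turn forces the $i$-th row of $M$ to be supported on $I$. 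Hence $M_{II}$ is stochastic and its associated Markov chain admits a recurrence class $F \subset I$, which is also a recurrence class of $M$ on $[n]$.

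The main obstacle is to turn this into a contradiction with $F \subset I \subset N$, since $v$ is not a priori harmonic, and the definition of critical nodes is given via subgradients at harmonic vectors. My plan is to invoke the spectral theory of~\cite{spectral2} to exhibit a harmonic vector $u'$ in the face of affinity of $g$ at $v$, so that $M \in \partial g(u')$ too. Writing $g$ as $w \mapsto Mw + c$ on this face, solvability of $(I - M)u' = c$ reduces to checking $\pi c = 0$ for every invariant measure $\pi$ of $M$; for the measure supported on the recurrence class $F \subset N$, the identity $g(v)_F = v_F$ yields $\pi c = \pi(g(v) - Mv) = \pi v - \pi v = 0$. Once $u'$ is constructed, $F$ is a recurrence class of $M \in \partial g(u')$ at a harmonic vector, so $F \subset C$ by definition, contradicting $F \subset N$.
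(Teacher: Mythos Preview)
Your existence argument and the maximum-principle setup through the construction of $F$ are fine: you correctly locate a stochastic matrix $M \in \partial g(v)$ whose $I\times I$ block is stochastic, hence admits a recurrence class $F \subset I \subset N$. The gap is exactly where you flag it, and the repair you sketch does not go through.

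There are three specific problems. First, $g$ is not assumed polyhedral in this lemma, so there is in general no ``face of affinity'' on which $g$ coincides with $w\mapsto Mw+c$; the subgradient inequality only gives $g(w)\geq Mw+c$ globally. Second, even granting polyhedrality, solvability of $(I-M)u'=c$ requires $\pi c=0$ for \emph{every} invariant measure $\pi$ of $M$, and you verify this only for the measure supported on $F$; other recurrence classes of $M$ may meet $C$, where you have no information about $g(v)-v$. Third, even if such a $u'$ exists, you would still need $u'$ to lie in the region of affinity so that $g(u')=Mu'+c=u'$ (rather than merely $g(u')\geq u'$) and so that $M\in\partial g(u')$; nothing in the construction places $u'$ there.

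The paper avoids the issue entirely by working with $h$ rather than with $g$ at a non-harmonic point. It observes that $h$ is convex, order preserving and additively subhomogeneous, and uses the convex chain rule (Theorem~23.9 of~\cite{ROCK}) to compute $\partial h(u_N)=\{M_{NN}:M\in\partial g(u)\}$, where $u$ \emph{is} harmonic. Any recurrence class $F$ of some $P\in\partial h(u_N)$ then satisfies $F\subset N$ and is a recurrence class of some $M\in\partial g(u)$, forcing $F\subset C$, a contradiction. Hence $h$ has no critical nodes, and Corollary~1.3 of~\cite{spectral2} gives uniqueness of the fixed point directly. This sidesteps any need to manufacture a harmonic vector adapted to a subgradient chosen after the fact.
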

\begin{proof}
Since the map $g$ is order preserving and additively homogeneous, 
it is nonexpansive in the sup-norm, and so, the map
$h$ is also order preserving and nonexpansive in the sup-norm, hence it is
additively subhomogeneous.
Since $u$ is harmonic with respect to $g$, that is a fixed point of $g$,
$u_N$ is a fixed point of the map $h$.
A classical result of convex analysis (Theorem~23.9 of~\cite{ROCK})
shows in particular that if $G$ is a finite valued convex function
defined on $\mathbb{R}^d$, if $A$ is a linear map $\mathbb{R}^p\to\mathbb{R}^d$,
and if $H(\valx):=G(A\valx)$, then, $\partial H(\valx)= A^*\partial{G}(A\valx)$.
Applying this result to every convex map $G_{\sx}$ defined on
$\RX$ such that $G_{\sx}(\valy):=g_{\sx}(\valy+\imath_N(0,u_C))$,
with $\sx\in N$, and to the linear map $A=r_N^*$, 
we deduce that
$\partial h_{\sx} (u_N)$ is the projection on $\R^N$
of the subdifferential  of $G_{\sx}$ at the point $r_N^*(u_N)$,
or equivalently of the subdifferential of
$g_{\sx}$ at the point $r_N^*(u_N)+\imath_N(0,u_C)=\imath(u_N,u_C)=u$.
Using~\eqref{subdiffequiv}, this implies that
$\partial h(u_N)=\set{M_{NN}}{M\in \partial g(u)}$.
Since $g$ is order
preserving and additively homogeneous, the elements of $\partial g(u)$ are
stochastic matrices, and by the above equality, or since $h$ is
order preserving and additively subhomogeneous, the elements of
$\partial h(u_N)$ are substochastic matrices.
Recall that the set of critical nodes of $h$ 
is defined as the set of nodes that belong to a final class
$F$ of some matrix $P\in \partial h(u_N)$ satisfying 
that $P_{FF}$ is stochastic. Denote by $F$ such a class.
We have $F\subset N$. Moreover, 
since $\partial h(u_N)=\set{M_{NN}}{M\in \partial g(u)}$, 
we can find a matrix $Q\in \partial g(u)$
the $N\times N$ submatrix of which, $Q_{NN}$, coincides
with $P$. Since $F\subset N$, $Q_{FF}$ coincides with $P_{FF}$.
Hence $Q_{FF}$ is a stochastic matrix, which implies 
that $F$ is a recurrent class of $Q$. 
 This shows that the nodes of $F$ are critical
nodes of $g$, which contradicts the fact that the set
of critical nodes is $C$ since $F\subset N=[n]\setminus C$.
 Therefore the set of critical nodes of $h$ is empty.
It follows from Corollary~1.3 of~\cite{spectral2} that $h$ has a unique
fixed point.
\end{proof}
We shall need the following result of~\cite{spectral2}.
\begin{lem}[{\cite[(7) and Lemma~3.3]{spectral2}}]\label{lem-withmarianne}
Let $g$ be a convex order-preserving
additively homogeneous self-map of $\RX$, with
at least one harmonic vector. Denote by $C$ the set of critical
nodes. If $u$ is super-harmonic with respect to
 $g$, then $g(u)=u$ on $C$, and
 $g^\omega(u):=\lim_{k\to\infty} g^k(u)$ exists, is harmonic
with respect to $g$ and coincides with $u$ on $C$.
Moreover, the map $\Q{g}:\sH^+(g)\to \sH(g)$ is order-preserving, additively
homogeneous, convex, and is a projector.
\end{lem}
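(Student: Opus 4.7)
The plan is to first show that the iterates $(g^k(u))_{k\ge 0}$ converge to a harmonic limit $u^\omega$, then to establish the pointwise identity $g(u)=u$ on $C$ through a subdifferential argument at $u^\omega$, and finally to derive the four properties of $\Q{g}$ by passing to the limit along the iterates.

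Since $g$ is order-preserving and $g(u)\le u$, the sequence $(g^k(u))_{k\ge 0}$ is non-increasing. Because $g$ is nonexpansive in the sup-norm (order-preservingness plus additive homogeneity), we have $\snorm{g^k(u)-\bar u}_\infty\le \snorm{u-\bar u}_\infty$ for any fixed harmonic vector $\bar u$, so the sequence is uniformly bounded and admits a pointwise limit $u^\omega\in\RX$. Continuity of $g$ (which follows from nonexpansiveness) then gives $g(u^\omega)=u^\omega$, so $u^\omega$ is harmonic.

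The heart of the argument is to show $g(u)=u$ on $C$. Using that $C$ is independent of the choice of harmonic vector, I take $u^\omega$ as the reference and pick any $M\in \partial g(u^\omega)$. Convexity of $g$ gives
\[
g(u)\ge g(u^\omega)+M(u-u^\omega)=u^\omega+M(u-u^\omega),
\]
so, writing $w:=u-u^\omega\ge 0$ (since $g^k(u)$ decreases to $u^\omega$), we obtain $0\le u-g(u)\le (I-M)w$. Let $F$ be a recurrent class of $M$, and let $\mu$ be an invariant probability of the irreducible stochastic matrix $M_{FF}$, extended by zero outside $F$. Because $F$ is a final class of $M$, we have $\mu M=\mu$ on $\RX$, hence $\mu(I-M)w=0$; combined with $u-g(u)\ge 0$ and the strict positivity of $\mu$ on $F$, this forces $u=g(u)$ on $F$. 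Taking the union over all $M\in\partial g(u^\omega)$ and all such $F$ yields $u=g(u)$ on $C$. To propagate this identity to $u^\omega$, observe that each $g^k(u)$ is itself super-harmonic; applying the same reasoning gives $g^{k+1}(u)=g^k(u)$ on $C$ for every $k$, so the sequence $(g^k(u))$ is constant on $C$ and its limit $u^\omega$ agrees with $u$ on $C$.

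The properties of $\Q{g}$ follow by passing to the limit along the iterates: order-preservingness and additive homogeneity come from the corresponding properties of every $g^k$ together with the fact that $u\mapsto u+\lambda$ preserves super-harmonicity; convexity follows from the convexity of each iterate $g^k$ (an immediate induction using that $g$ is convex and order-preserving) and the stability of super-harmonicity under convex combinations; and the projector identity $\Q{g}\comp\Q{g}=\Q{g}$ is immediate since $\Q{g}(u)$ is harmonic, hence a fixed point of every $g^k$. The delicate point, which I expect to be the main obstacle, is the passage from the componentwise inequality $0\le u-g(u)\le (I-M)w$ to equality on $C$: this is where the irreducibility of $M_{FF}$, the positivity of the invariant measure $\mu$, and the specific structure of recurrent (final stochastic) classes enter in a crucial way.
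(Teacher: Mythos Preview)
Your proof is correct. Note, however, that the paper does not actually prove this lemma: it is quoted verbatim from~\cite{spectral2} (specifically (7) and Lemma~3.3 there), so there is no ``paper's own proof'' to compare against beyond the citation.

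Your argument is a clean, self-contained reconstruction of what the cited reference establishes. The key step---using the subdifferential inequality $g(u)\ge u^\omega+M(u-u^\omega)$ at the harmonic limit $u^\omega$, then pairing $0\le u-g(u)\le (I-M)w$ against an invariant probability supported on a recurrent class $F$ of $M$---is exactly the right mechanism, and your use of the fact that $C$ is independent of the choice of harmonic vector (so that $u^\omega$ may serve as the reference point) is legitimate. The propagation argument (each $g^k(u)$ is super-harmonic, hence fixed by $g$ on $C$, hence the sequence is constant on $C$) and the derivation of the four properties of $\Q{g}$ by passing to the limit are all sound; in particular, the stability of $\sH^+(g)$ under convex combinations (needed for convexity of $\Q{g}$) follows from the convexity of $g$, as you note.
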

The following result gives other characterizations of  $\Q{g}(u)$ that
allows one to compute it efficiently.
\begin{thm}\label{th-1}
Let $g$ denote a convex, order preserving, and additively homogeneous
self-map of $\RX$.
Assume that $g$ admits at least one harmonic vector. Let $C$ denote the
set of critical nodes of $g$, and let $N$ denote
its complement in $[n]$, $N=[n]\setminus C$.
For a super-harmonic vector $u$, 
the following conditions define uniquely the same
vector $v$:
\begin{enumerate}
\item[{\rm (i)}] $v=\Q{g}(u):=\lim_{k\to\infty} g^k(u)$;
\item[{\rm (ii)}] $v$ is harmonic and coincides with $u$ on $C$;
\item[{\rm (iii)}] $v$ coincides with $u$ on $C$ and its restriction
to $N$ is a fixed point of the map $h:\valy\mapsto (r_N\comp g\comp \imath_N)(\valy,u_C)$;
\item[{\rm (iv)}] $v$ is the smallest super-harmonic vector that
dominates $u$ on $C$.
\end{enumerate}
\end{thm}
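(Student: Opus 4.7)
The plan is to prove the equivalences (i)$\Leftrightarrow$(ii)$\Leftrightarrow$(iii) by combining Lemma~\ref{lem-withmarianne} with a restriction argument and uniqueness from Lemma~\ref{lem-1}, and then to handle (iv) separately using a monotone dependence of the harmonic reduction on its values on the critical set.

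First I would invoke Lemma~\ref{lem-withmarianne}: since $u$ is super-harmonic, the vector $v:=\Q{g}(u)=\lim_k g^k(u)$ exists, is harmonic, and coincides with $u$ on $C$, which is exactly (i)$\Rightarrow$(ii). The implication (ii)$\Rightarrow$(iii) is elementary: if $v$ is harmonic with $v_C=u_C$, then $v=\imath_N(v_N,u_C)$, and applying $r_N$ to $g(v)=v$ gives $h(v_N)=v_N$. Uniqueness in (ii) and (iii) then follows from Lemma~\ref{lem-1}: indeed, the previous step produces at least one harmonic vector (namely $\Q{g}(u)$) whose restriction to $C$ is $u_C$, so the hypothesis of Lemma~\ref{lem-1} is met at $\Q{g}(u)$; the lemma then yields that the fixed point of $h$ is unique. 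Since $v_C=u_C$ is prescribed, $v$ is determined, and conditions (i), (ii), (iii) all pick out the same vector $\Q{g}(u)$.

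For (iv), the vector $v=\Q{g}(u)$ is harmonic --- hence super-harmonic --- and $v_C=u_C$, so $v$ belongs to the set of super-harmonic vectors dominating $u$ on $C$. The core task is minimality. Let $\Phi:c\in\R^C\mapsto$ the unique harmonic vector with critical values $c$, well-defined by the uniqueness just established with $c$ playing the role of $u_C$; thus $v=\Phi(u_C)$. The main technical obstacle is to show that $\Phi$ is order-preserving. Once that is in hand, for any super-harmonic $w$ with $w_C\geq u_C$, Lemma~\ref{lem-withmarianne} yields that $\Q{g}(w)$ is harmonic with critical values $w_C$, hence $\Q{g}(w)=\Phi(w_C)\geq \Phi(u_C)=v$ by monotonicity, while $\Q{g}(w)\leq w$ because $w$ is super-harmonic (the iterates $g^k(w)$ form a nonincreasing sequence). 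Therefore $w\geq v$, proving minimality.

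To establish monotonicity of $\Phi$, given $c\leq c'$, I would set $w':=\min(\Phi(c),\Phi(c'))$ componentwise. Since $g$ is order-preserving, $g(w')\leq g(\Phi(c))=\Phi(c)$ and $g(w')\leq \Phi(c')$, so $g(w')\leq w'$; thus $w'$ is super-harmonic with $w'_C=c$. Applying Lemma~\ref{lem-withmarianne} gives $\Q{g}(w')$ harmonic with critical values $c$, hence $\Q{g}(w')=\Phi(c)$ by the uniqueness already established, while $\Q{g}(w')\leq w'$ since $w'$ is super-harmonic. Combining, $\Phi(c)\leq w'\leq \Phi(c')$, which completes the argument.
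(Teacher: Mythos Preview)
Your argument is correct. For the equivalence of (i)--(iii) you follow the paper almost verbatim: (i)$\Rightarrow$(ii) via Lemma~\ref{lem-withmarianne}, (ii)$\Rightarrow$(iii) by restriction, and uniqueness via Lemma~\ref{lem-1} applied at the harmonic vector $\Q{g}(u)$ (whose critical values are $u_C$, so the map $h$ of the lemma coincides with that of the theorem).

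The only genuine difference is in the treatment of (iv). The paper proves (ii)$\Rightarrow$(iv) by a direct iteration: given a super-harmonic $w$ with $w_C\geq u_C$, it observes $w_N\geq h(w_N)$, iterates $h$ to a limit which must be the unique fixed point $v_N$, and concludes $w_N\geq v_N$; it then proves (iv)$\Rightarrow$(ii) separately via a minimality argument involving $v\wedge u$. You instead package the uniqueness as a map $\Phi:c\mapsto\text{(the harmonic vector with critical values }c)$ and prove $\Phi$ is order-preserving by the neat observation that $\min(\Phi(c),\Phi(c'))$ is super-harmonic with critical values $\min(c,c')$, then combine $w\geq \Q{g}(w)=\Phi(w_C)\geq\Phi(u_C)=v$. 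This is a pleasant reorganization: it absorbs the separate (iv)$\Rightarrow$(ii) step into the monotonicity lemma and makes the dependence on boundary data explicit. The paper's route is slightly more self-contained (no auxiliary map $\Phi$) and keeps the iteration visibly on the non-critical block, which ties in more directly with the computational use of (iii). Note that your $\Phi$ is only needed at values $c$ arising as $w_C$ for some super-harmonic $w$; Lemma~\ref{lem-withmarianne} guarantees existence there, so there is no domain issue.
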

\begin{proof}
\noindent (i)$\Rightarrow$(ii): This follows from 
Lemma~\ref{lem-withmarianne}.

\noindent (ii)$\Rightarrow$(iii):
Assume that the vector $v$ is harmonic and coincides with $u$ on $C$
and let $h$ be defined as in Point (iii).
Then, $\valx_N=h(\valx_N)$, showing that $v_N$ is a fixed point of $h$.

\noindent (iii)$\Rightarrow$(i):
Let $v$ and $h$ be as in Point (iii), hence $\valx_C=u_C$ and
 $\valx_N$ is a fixed point of $h$.
By Lemma~\ref{lem-withmarianne},  $w:=\Q{g}(u)$ is harmonic with respect to 
$g$ and $w_C=u_C$. 
Applying Lemma~\ref{lem-1} to $g$ and $w$ (instead of $u$), 
and using  $w_C=u_C$, we  get that
the fixed point of $h$ is unique, and thus equal to $w_N$.
This shows that $\valx_N=w_N$, and since $\valx_C=u_C=w_C$, we get that
$\valx=w=\Q{g}(u)$, that is Point (i).

\noindent (ii)$\Rightarrow$(iv):
Let $v$ be as in Point (ii).
Since $v$ is harmonic and coincides with $u$ on $C$, it is super-harmonic and
dominates $u$ on $C$.
By ((ii)$\Rightarrow$(iii)),  $v_N$ is a fixed point of $h$,
with $h$ as in Point (iii).
Assume now that $w$ is super-harmonic and dominates $u$ on $C$, that is
$w_C\geq u_C$. Then,
$w\geq g(w)$, and since $g$ is order preserving,
$w_N\geq g_N(w_N,w_C)\geq g_N(w_N,u_C)=h(w_N)$. Since $h$ is order-preserving,
we deduce from $w_N\geq h(w_N)$ that 
$w_N\geq h^1(w_N)\geq h^2(w_N)\geq \cdots$.
Since $h$ is nonexpansive and admits a fixed point,
every orbit of $h$ is bounded. 
Hence, $h^k(w_N)$ has a limit as $k$ tends to infinity,
and this limit is a fixed point of $h$.
Applying Lemma~\ref{lem-1} to $g$ and $v$ (instead of $u$),
and using  $v_C=u_C$, we get that the fixed point of $h$ is unique and
equal to $v_N$. It follows that $w_N\geq v_N$.
Since $v$ coincides with $u$ on $C$ and $w_C\geq u_C$,
we deduce that $w\geq v$.
This shows that  $v$ is the smallest super-harmonic vector that
dominates $u$ on $C$.

\noindent (iv)$\Rightarrow$(ii):
Let $v$ be a minimal super-harmonic vector that
dominates $u$ on $C$ (or the smallest one if it exists).
Since $v$ is a super-harmonic vector, that is $g(v)\leq v$,
and $g$ is order-preserving, we get that
$g(g(v))\leq g(v)$, which shows that $g(v)$ is also super-harmonic.
Moreover, by Lemma~\ref{lem-withmarianne},
$g(v)$ coincides with $v$ on $C$, hence it dominates $u$ on $C$.
Since $g(v)\leq v$, the  minimality of $v$ implies $g(v)=v$, 
which shows that $v$ is harmonic.
Since $u$ and $v$ are super-harmonic vectors and $g$ is order-preserving,
we get that the infimum $v\wedge u$ of $v$ and $u$ 
is also a super-harmonic vector. Since  $v$ 
dominates $u$ on $C$, we get that $v\wedge u$ equals $u$ on $C$.
Hence by the minimality of $v$, and $v\wedge u\leq v$,  we obtain that
$v=v\wedge u$, hence $v\leq u$. This implies that $v$ equals $u$ on $C$,
hence $v$ satisfies (ii).
\end{proof}

Let $\Q{g}$ be defined as in Theorem~\ref{th-1}.
When $g(\valx)=M\valx$ is a linear operator,
and $M$ is a stochastic matrix, $\Q{g}(u)$ coincides
with the \new{reduced}
super-harmonic vector of $u$ with respect to the set $C$.
When $g$ is a max-plus linear operator, the operator $\Q{g}$ 
coincides with the {\em spectral projector} which has been defined in the
max-plus literature, see~\cite{gg}.
For this reason, we call $\Q{g}$ the (nonlinear)
\new{spectral projector} of $g$.

We now define a spectral projector acting on half-lines.
We assume that $g$ is a polyhedral, convex, order preserving, 
and additively homogeneous self-map of $\RX$. 
This implies in particular that for all $i\in\X$,
the domain of the Legendre-Fenchel transform $g_i^*$ of the coordinate $g_i$ 
of $g$ is included in the set of stochastic vectors, 
and that $g_i$ is the Legendre-Fenchel transform of $g_i^*$,
hence can be put in the same form as in~\eqref{ILPI}:
\begin{equation} \label{convpoly}
g_{\sx}(v) \, = \, \max_{b \in \B_\sx} \, \left( \sum_{\sy \in \X}
P_{\sx \sy}^{b}\, \valx_\sy \, + \, r_\sx^{ b} \right)\enspace,
\end{equation}
where, for all $\sx\in\X$, $P_{\sx .}^b\in\RX$ is a stochastic vector,
$r_\sx^b\in\R$, and $B_i$ is
the domain of $g_i^*$, see~\cite[Prop.\ 2.1 and Cor.\ 2.2]{spectral2}.
Since the map $g_i$ is polyhedral, the domain of $g_i^*$
is also a polyhedral convex set, see~\cite[Th.\ 19.2]{ROCK},
and since it is included in the set of stochastic vectors, it is
compact, hence it is the convex envelope of the finite set 
of its extremals.
Then, in~\eqref{convpoly}, $B_i$ can be replaced by this finite set.

Since $g$ is polyhedral, order preserving, and additively homogeneous, 
we get by Kohlberg theorem~\cite{kohlberg} recalled in Section~\ref{discrete},
that  $g$ has an invariant half-line
 $(\eta,v)$, $\eta$ is necessarily equal to $\chi(g)$, and
by~\eqref{DPsyst2P1}, $v$ and $\eta$ satisfy  $\eta = \fr{g}(\eta)$ and
$\eta+v  = \tfx{g}{\eta}(v)$, where $ \fr{g}$ and $\tfx{g}{\eta}$ are defined
in~\eqref{def-frec} and~\eqref{def-ftg} respectively.
When $g$ is given by~\eqref{convpoly}, these maps can be rewritten as:
\begin{align}
[\fr{g}(\eta)]_{\sx} = \,\max_{b \in \B_\sx} \, \left( \sum_{\sy \in \X}
P_{\sx \sy}^{b}\, \eta_\sy \right), \quad \sx\in \X\enspace, \label{def-frec-conv}
\end{align}
and
\begin{subequations} \label{eta-sets-conv}
\begin{align}
[\tfx{g}{\eta}(v)]_{\sx}&= \, \max_{b \in \tBg{\sx}{\eta} } \, \left( \sum_{\sy \in \X}
P_{\sx \sy}^{b}\, \valx_\sy \, + \, r_\sx^{b} \right) \enspace, \label{def-ftg-conv}
\\
\tBg{\sx}{\eta} &:= \underset{b \in \B_\sx}{\operatorname{argmax}} \; \left\{
 \sum_{\sy \in \X} P_{\sx \sy}^{b}\, \eta_\sy \right\} \enspace.
\end{align}
\end{subequations}

Let us fix an invariant half-line  $(\eta,v)$ of $g$.
Denote $\bar g(\valy):=\tfx{g}{\eta}(\valy)-\eta$,
then $v$ is harmonic with respect to $\bar g$: $\bar g(v)=v$.
We define the set of \new{critical nodes}
of $g$, $C(g)$, to be the set of critical nodes of $\bar g$.
A half-line $w: t\mapsto \tetav$ is \new{super-invariant}
 if $g\comp w(t)\leq w(t+1)$, for $t$
large enough. From~\eqref{Shapley-inv}, this property 
is equivalent to the conditions $\eta \geq \fr{g}(\eta)$ with 
 $v_\sx \geq \bar{g}_\sx(v)$ when $\eta_\sx =\fr{g}_\sx(\eta)$. 
In particular when the equality $\eta =\fr{g}(\eta)$ holds, 
it is equivalent to $v  \geq \bar{g}(v)$.

\begin{corollary}\label{cor-1}
Assume that $g$ is a polyhedral, convex, order preserving, 
and additively homogeneous self-map of $\RX$.
Assume that $w: t\mapsto \tetav$
is a super-invariant half-line of $g$ with $\eta=\chi(g)$. Then, there exists
a unique invariant half-line of $g$ which coincides with $w$
on the set of critical nodes of $g$. 
It is given by $t\mapsto t\eta+\Q{\bar g} (v)$, where
$\bar g:\valy\mapsto \tfx{g}{\eta}(\valy)-\eta$.
\end{corollary}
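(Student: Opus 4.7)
The plan is to reduce the statement to Theorem~\ref{th-1} applied to the auxiliary map $\bar g$. First, I will verify that $\bar g$ inherits the hypotheses of that theorem. Since $\tfx{g}{\eta}$ is defined by the explicit formula~\eqref{def-ftg-conv}, it is convex, order preserving, and additively homogeneous, and subtracting the constant vector $\eta$ preserves all three properties, so $\bar g$ has them too. Moreover, Kohlberg's theorem (recalled in Section~\ref{discrete}) guarantees that $g$ admits at least one invariant half-line, whose slope is forced to be $\chi(g)=\eta$; the bias of such a half-line is then a harmonic vector of $\bar g$ by the equivalence between invariant half-lines of $g$ with slope $\eta$ and fixed points of $\bar g$, which I record below. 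Hence the critical set of $\bar g$, which equals $C(g)$ by definition, is well-defined and Theorem~\ref{th-1} is available.

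Second, I will translate the super-invariance of $w:t\mapsto t\eta+v$ into super-harmonicity of $v$ with respect to $\bar g$. Using the Shapley decomposition~\eqref{Shapley-inv} and $\fr{g}(\eta)=\eta$ (which holds because $\eta=\chi(g)$ is the slope of an invariant half-line), one has $g(t\eta+v)=t\eta+\tfx{g}{\eta}(v)$ for $t$ large, so the inequality $g(t\eta+v)\leq (t+1)\eta+v$ is equivalent to $\tfx{g}{\eta}(v)\leq \eta+v$, i.e.\ $\bar g(v)\leq v$. The same computation, with equality instead of inequality, shows that the invariant half-lines of $g$ of slope $\eta$ are exactly the half-lines $t\mapsto t\eta+v'$ with $\bar g(v')=v'$.

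Third, I will apply Theorem~\ref{th-1} to $\bar g$ and the super-harmonic vector $v$. Point (i) gives that $v^\star := \Q{\bar g}(v)$ exists, and the equivalent Point (ii) says that $v^\star$ is the unique vector that is harmonic with respect to $\bar g$ and coincides with $v$ on $C=C(g)$. Combined with the bijection between invariant half-lines of $g$ with slope $\eta$ and harmonic vectors of $\bar g$ established in the previous paragraph, this yields simultaneously existence and uniqueness: $t\mapsto t\eta+v^\star$ is an invariant half-line of $g$, it coincides with $w$ on $C(g)$, and any other invariant half-line of $g$ coinciding with $w$ on $C(g)$ must have slope $\eta$ (since $\eta=\chi(g)$ is determined) and bias a harmonic vector of $\bar g$ agreeing with $v$ on $C(g)$, hence equal to $v^\star$ by Theorem~\ref{th-1}.

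The only potential subtlety is to make sure the correspondence between invariant half-lines and harmonic vectors of $\bar g$ is applied correctly at the germ level, i.e.\ that \eqref{Shapley-inv} is used for $t$ large enough rather than all $t$; since the notion of invariant half-line in Section~\ref{discrete} is already defined up to germs at infinity, this causes no difficulty and the argument is clean.
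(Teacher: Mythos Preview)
Your proof is correct and follows essentially the same route as the paper's: reduce to Theorem~\ref{th-1} applied to $\bar g$, using the equivalence between invariant half-lines of $g$ of slope $\eta=\chi(g)$ and harmonic vectors of $\bar g$, and the translation of super-invariance of $w$ into super-harmonicity of $v$. Your write-up is slightly more explicit in checking that $\bar g$ inherits the hypotheses of Theorem~\ref{th-1} and in discussing the germ-level issue, but the argument is the same.
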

\begin{proof}
As said above, an invariant half-line of $g$ must be of the form 
$t\mapsto t\eta+\valz$, where $\eta=\chi(g)$ and $\valz\in \RX$ is
a fixed point of $\bar g$.
If  $w: t\mapsto \tetav$
is a super-invariant half-line of $g$ with $\eta=\chi(g)$, then
$\eta=\fr{g}(\eta)$, and by~\eqref{Shapley-inv}, we get
 $v  \geq \bar{g}(v)$.
From this, we deduce that $t\mapsto t\eta+\valz$ is an invariant half-line
of $g$ which coincides with $w$ on $C$, if and only if $\valz$ is 
harmonic with respect to $\bar g$ and coincides with $v$ on $C$.
By Theorem~\ref{th-1}, $\Q{\bar g}(v)$ is such a harmonic vector, and
it is the unique one. The corollary follows.
\end{proof}
For any super-invariant half-line $w$ of $g$ with $\eta=\chi(g)$, 
we define $\Q{g}(w)$ to be the half-line $t\mapsto t\eta+\Q{\bar g}(v)$.

\section{Policy iteration algorithm for stochastic mean payoff games}
\label{section-PIgames}

The following policy iteration scheme was introduced by
Cochet-Terrasson and Gaubert in~\cite{CochetGaub}. We first give,
in Algorithm~\ref{algo-main}, an
abstract formulation of the algorithm similar to the one given 
in~\cite{CochetGaub},
which is convenient to establish 
its convergence. A detailed practical
algorithm will follow and the proof of the convergence of the
algorithm will be given in the last subsection.

\subsection{The theoretical algorithm}

In order to present the algorithm, we assume that every
coordinate of $f:\RX\to\RX$ is given by:
\begin{align}
f_{\sx}(\valx) = \min_{a\in A_{\sx}} f^a_{\sx}(\valx) \enspace,\label{e-1}
\end{align}
where $A_{\sx}$ is a finite set, and $f^a_{\sx}$ is a polyhedral order
preserving, additively homogeneous, and convex
map from $\RX$ to $\R$.
These conditions all together are indeed equivalent to the property that $f$ is 
of the form~{\rm (\ref{ILPI},\ref{eq1opdef})},
since as already observed any polyhedral order preserving,
additively homogeneous and convex map $g$ from $\RX$ to $\R$
can be put in the form~\eqref{convpoly}, with $B_i$ a finite set.
For all feedback strategies $\balpha\in\Am=
\{\balpha: [n]\to \Ag,\; i \mapsto   \balpha(i)\in A_{\sx}\}$, we denote by 
  $f^{(\balpha)}$ the self-map  of $\RX$ the $i$-th coordinate of
  which is given by $f^{(\balpha)}_{\sx}=f^{\balpha(i)}_{\sx}$. 

\begin{algo}[Policy iteration for multichain mean payoff two player games~\cite{CochetGaub}]
\label{algo-main}
\

\emph{Input}: A map $f$ the coordinates of which are of the form~\eqref{e-1}.

\emph{Output}: 
An invariant half-line $w:t\mapsto \tetav$ of $f$ and an optimal policy
$\balpha\in\Am$.
\begin{enumerate}
\item\label{step-init} {\em Initialization}: Set $k=0$.
Select an arbitrary strategy $\sigmainit\in\Am$.
Compute an invariant half-line of $\finit$, 
$w^{(0)}: t\mapsto  t\eta^{(0)}+v^{(0)}$. 
\item\label{step-2} If $f \comp \wold(t)=\wold(t+1)$ holds
for $t$ large enough, the algorithm stops and returns
$\wold$ and $\sigmaold$.
\item\label{step-improve} Otherwise, {\em improve the strategy} $\sigmaold$
for $\wold$, by selecting a strategy $\sigmanew$ such that 
$f\comp \wold(t)=\fnew\comp \wold(t)$,
for $t$ large enough. The choice of $\sigmanew$ must be conservative,
meaning that, for all $\sx\in\X$, $\sigmanew(\sx)=\sigmaold(\sx)$ if 
$f_{\sx}\comp \wold(t)=\fold_{\sx}\comp \wold(t)$, 
for $t$ large enough.
\item\label{step-val} Compute
an arbitrary invariant half-line $\wdemi(t):t\mapsto t\etanew+\vdemi$ of $\fnew$.
If $\etanew \neq \etaold$ then set $\vnew = \vdemi$, i.e. $\wnew=\wdemi$,
and go to step~\ref{step-increment}.
Otherwise ($\etanew=\etaold$), we say that the iteration
is \new{degenerate}.
\item\label{step-degenerate}
Compute the invariant half-line $\wnew= \Q{(\fnew)}(\wold)$ of $\fnew$,
and define $\vnew$ and $\etanew$ by $\wnew(t)=t\etanew+\vnew$.
\item\label{step-increment} Increment $k$ by one and go to step~\ref{step-2}.
\end{enumerate}
\end{algo}

Let us give some details about the well posedness of this algorithm. 
First, the existence of the invariant half-lines in Steps~\ref{step-init} and
\ref{step-val} follows from Kohlberg theorem~\cite{kohlberg}
applied to the polyhedral order
preserving additively homogeneous maps $\fold$ with $k\geq 1$.
Second, due to the finiteness of the action sets $A_{\sx}$ and
the fact that the maps $ f^a_{\sx}$ are polyhedral, the maps $f$ and
 $ f^a_{\sx}$ can be rewritten in the form~\eqref{Shapley-inv}.
Hence, the test of Step~\ref{step-2} and the asymptotic optimization problem of
Step~\ref{step-improve} can be rewritten as an equality test for (germs of)
half-lines and the pointwise minimization of a finite set of half-lines,
which are transformed into systems of equations and lexicographical 
optimization problems, using the representation of half-lines as
couples $(\eta,v)$ instead of maps $w:t\mapsto t\eta+v$, see the
following section for details.

Finally, 
at each iteration $k$ of Algorithm~\ref{algo-main}, 
$\wold:t\mapsto t\etaold+\vold$ is a super-invariant half-line of $\fnew$.
Indeed, by construction of $\sigmanew$, and 
since $\wold$ is an invariant half-line of $\fold$,  we get
\begin{equation}\label{wksuper}
\fnew(\wold(t)) = f(\wold(t))\leq \fold (\wold(t))=\wold(t+1) \enspace,
\end{equation}
for $t$ large enough. 
Moreover,  since $\wold$ is an invariant half-line of $\fold$, we have 
$\chi(\fold)= \etaold$. 
Hence, in Step~\ref{step-degenerate}, $\wold$ is a
super-invariant half-line of $\fnew$ with slope $\etaold$ equal to
$\etanew = \chi(\fnew)$.
By Corollary~\ref{cor-1}, there exists a
unique invariant half-line of $\fnew$ which coincides with $\wold$ on
the set of critical nodes of $\fnew$ and it is given by $\wnew
= \Q{(\fnew)}(\wold): t\mapsto t \etanew+\vnew$
with $\vnew = \Q{\left(\bfnew\right)}(\vold)$.
Practical computations are detailed in the following sections.

\subsection{The practical algorithm}
\label{subsection-practical}

All the steps of Algorithm~\ref{algo-main} involve  equality tests
or pointwise minimizations of half-lines.
However, it would not be robust to do these tests on half-lines 
just by choosing an arbitrary large number $t$ in the equations
and inequations to be solved.
We shall rather use the equivalence between the representation of a half-line
as a map $w:t\mapsto t\eta+\valx$ with $t$ large and that 
as a couple $(\eta,v)$. This allows one to transform
all the tests into systems of equations or
optimizations of finite sets of half-lines for the pointwise lexicographic
order (which is linear, for each coordinate).
This means that we are solving the system of
equations~\eqref{DPsyst2P1}.
Then, using the notations of Section~\ref{discrete},
the corresponding practical algorithm of the formal  
Algorithm~\ref{algo-main} is given below in Algorithm~\ref{algo-detailed}.

\begin{algo}[Policy iteration for multichain mean payoff two player games] \label{algo-detailed}
\ 

\emph{Input}: A map $f$ the coordinates of which are of the form~\eqref{e-1}
and the notations~{\rm (\ref{eq1opdef},\ref{ILPI})} 
and~{\rm (\ref{eta-sets}--\ref{def-ftg-a})}.

\emph{Output}: An invariant half-line $(\eta,v)$ of $f$ and an optimal policy
$\balpha\in\Am$.

\begin{enumerate}
\item \label{step-init-detail}  {\em Initialization}:
Set $k=0$.
Select an arbitrary strategy $\sigmainit\in\Am$.
Compute the couple ($\eta^{(0)}$, $v^{(0)}$) solution of 
   \begin{equation} \label{DP-syst-step-init} \left\{ \begin{array}{r l}
	\eta_{\sx}^{(0)} & = \,	\frvia{F}{\eta^{(0)}}{\sx}{\sigmainit(\sx)}  \\
	\eta_{\sx}^{(0)} + \valx_{\sx}^{(0)} & = \,\tfxvia{F}{\eta^{(0)}}{v^{(0)}}{\sx}{\sigmainit(\sx)}
     \end{array} \right. \quad \text{for all } i \in [n]\enspace.
   \end{equation}
\item \label{step-stop-detail} If $\etaold$ and $\vold$ satisfy
  System~\eqref{DPsyst2P1}, or equivalently if $\sigmanew=\sigmaold$ 
is solution of~\eqref{sys-improve} below, then 
the algorithm stops and returns $(\etaold, \vold)$ and  $\sigmaold$.

\item \label{step-improve-detail} Otherwise, improve the policy $\sigmaold\in\Am$ for $(\etaold, \vold)$
  in a conservative way, that is choose $\sigmanew\in\Am$  such that
\begin{equation}\label{sys-improve}
\left\{ \begin{array}{l}
\displaystyle 
\sigmanew(i) \ \in \ \underset{ a \in \tA{\sx}{\etaold}}{\operatorname{argmin}} \left\{ \tfxvia{F}{\etaold}{\vold}{\sx}{a} \right\} \\
\sigmanew(i) = \sigmaold(i)\text{ if }\sigmaold(i)\text{ is optimal,}  
\end{array}\right. \quad \text{for all } i\in\X\enspace.
\end{equation}

\item \label{step-val-detail} Compute a couple ($\etanew$, $\vdemi$) for policy $\sigmanew$
   solution of 
   \begin{equation}\label{DP-syst-step-val} \left\{ \begin{array}{r l} 
	\etanew_{\sx} & = \, \frvia{F}{\etanew}{\sx}{\sigmanew(\sx)} \,    \\
       	\etanew_{\sx} + \vdemi_{\sx}  & =  \,  \tfxvia{F}{\etanew}{\vdemi}{\sx}{\sigmanew(\sx)} 
     \end{array} \right.  \quad \text{for all } i\in\X\enspace.
   \end{equation}
If $\etanew \ne \etaold$ then set $\vnew =
   \vdemi$ and go to
   step~\ref{step-increment-detail}. Otherwise, the iteration is
   degenerate. 
 \item \hspace{-.8em} i) \label{step-critical-graph} Let 
$g:=\fnew$ ($g_\sx= F(\cdot;\sx,\sigmanew(\sx))$).
Compute $C(g)$ the set of critical nodes of the
   map $\bar g$ defined by~: $\bar g= \tfx{g}{\etanew}(\cdot)- \etanew$, 
or equivalently:
  \[
  \bar g_\sx (v) \, = \, \tfxvia{F}{\etanew}{v}{\sx}{\sigmanew(\sx)} 
- \etanew_\sx \quad \text{for all } \sx\in\X\enspace, 
  \]
  for which $\vdemi$ is a harmonic vector.
\addtocounter{enumi}{-1}
\item \hspace{-.8em} ii) Compute $\vnew = \Q {\bar g} (\vold)$, that is the solution of: 
\begin{equation}\label{degenerate-syst} \left\{ \begin{array}{l c l l}
   \vnew_{\sx} &=& \tfxvia{F}{\etanew}{\vnew}{\sx}{\sigmanew(\sx)} - 
\etanew_\sx  & \sx \in \X\setminus C(g) \\
   \vnew_{\sx} &=& \vold_{\sx} & \sx \in C(g) \enspace.
\end{array} \right. 
\end{equation} 
 \item \label{step-increment-detail} Increment $k$ by one and go to
  Step~\ref{step-stop-detail}.
\end{enumerate}
\end{algo}

It remains to precise how the steps are performed.
Step~\ref{step-improve-detail} is just composed of lexicographic optimization
problems in finite sets.
The systems~\eqref{DP-syst-step-init} and~\eqref{DP-syst-step-val} are the
dynamic programming equations  of a one player multichain mean payoff
game, they can be computed by applying the policy iteration algorithm
for multichain Markov decision processes with mean payoff
introduced by Howard~\cite{Howard60} and Denardo and Fox~\cite{Denardo-Fox68}.
Note that one can also choose to solve Systems~\eqref{DP-syst-step-init}
and~\eqref{DP-syst-step-val} by applying Algorithm~\ref{algo-detailed}
to the maps $h=f^{(\sigma_0)}$ and $h=\fnew$ respectively,
while replacing minimizations by maximizations, but in that case
the algorithm is almost equivalent to that of Howard~\cite{Howard60} 
and Denardo and Fox~\cite{Denardo-Fox68}, see Section~\ref{sec-detailed} below.
In Step~\ref{step-critical-graph}, the set of critical nodes of $g$,
that is that of $\bar g$, can be computed using a variant of the algorithm
proposed in~\cite[\S~6.3]{spectral2} described in Section~\ref{degenerateCase}.
Finally, System~\eqref{degenerate-syst} is the dynamic programming equation
of an optimal control problem with infinite horizon stopped when
reaching the set $C(g)$ which can be solved using the
original policy iteration algorithm of Howard~\cite{Howard60}. 
We shall recall all these algorithms in Section~\ref{sec-detailed}.

\subsection{Convergence of the algorithm}
\label{convergence}

In this subsection, we show in Theorem~\ref{thm-algo-terminates} that
Algorithm~\ref{algo-main}, or equivalently Algorithm~\ref{algo-detailed}
 terminates after a finite number of steps.
This result is proved using Theorem~\ref{th-1}.
Let first show some intermediate results.

The following lemma is known, see for instance Sorin~\cite{sorindeds}.
\begin{lem}[{See~\cite{sorindeds}}]
\label{lem-sorin}
Let $g$ denote an order preserving self-map of $\RX$, that
is nonexpansive in the sup-norm, and has a cycle time $\chi(g)$.
If $w:t\mapsto \tetav$ is a super-invariant half-line
of $g$, then, $\chi(g)\leq \eta$.
\end{lem}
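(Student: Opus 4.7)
The plan is to iterate the super-invariance inequality by monotonicity. By hypothesis there exists $t_0\in\R$ such that $g(t\eta+v)\leq (t+1)\eta+v$ for every $t\geq t_0$. I would then prove by induction on $k\geq 0$ that
\[
g^k(t\eta+v)\leq (t+k)\eta+v \quad \text{for all } t\geq t_0.
\]
The base case $k=0$ is trivial. For the inductive step, assuming the inequality at rank $k$, monotonicity of $g$ gives
\[
g^{k+1}(t\eta+v)=g\bigl(g^k(t\eta+v)\bigr)\leq g\bigl((t+k)\eta+v\bigr)\leq (t+k+1)\eta+v,
\]
where the last inequality is super-invariance applied at $(t+k)\eta+v$, which is legitimate because $t+k\geq t_0$.

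Next, I would fix $t=t_0$, divide by $k$, and let $k\to\infty$. On the right-hand side one gets $\eta+(t_0\eta+v)/k\to \eta$. On the left-hand side, by the hypothesis that the cycle time $\chi(g)=\lim_{k\to\infty} g^k(y)/k$ exists, one gets $\chi(g)$. Combining these two limits coordinatewise yields $\chi(g)\leq \eta$, which is the desired conclusion.

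The only point worth emphasizing, though not really an obstacle, is that the limit defining $\chi(g)$ must be the same when computed at $y=t_0\eta+v$ as at any other starting vector. This is a standard consequence of the sup-norm nonexpansiveness of $g$: for any $y,y'\in\RX$ one has $\snorm{g^k(y)-g^k(y')}_\infty\leq \snorm{y-y'}_\infty$, so $g^k(y)/k$ and $g^k(y')/k$ have the same limit whenever one of them exists. Hence the argument is complete; no further ingredient is needed beyond monotonicity, induction, and the definition of the cycle time.
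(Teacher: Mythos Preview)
Your proof is correct and follows essentially the same approach as the paper: iterate the super-invariance inequality by monotonicity to obtain $g^k(w(t_0))\leq w(t_0+k)$, then divide by $k$ and pass to the limit. You spell out the induction and the independence of $\chi(g)$ from the base point more carefully than the paper does, but the argument is identical.
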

\begin{proof}
We reproduce the argument, for completeness:
if $w:t\mapsto \tetav$ is a super-invariant half-line
of $g$, that is $g(w(t))\leq w(t+1)$ for $t\geq t_0$ for some $t_0\geq 0$,
then, $g^k(w(t))\leq w(t+k)$,
for all $k\geq 0$, and $t\geq t_0$,
and so $\chi(g)\leq \lim_{k\to \infty} w(t_0+k)/k=\eta$,
which shows Lemma~\ref{lem-sorin}.
\end{proof}

Since, by~\eqref{wksuper},
$\wold$ is a super-invariant half-line of $\fnew$, with slope
$\eta^{(k)}=\chi(\fold)$, it follows from Lemma~\ref{lem-sorin} that~:
\begin{lem}\label{lem-dec}
The sequence of strategies defined in Algorithm~\ref{algo-main} is such that
\[
\chi(\fnew)\leq \chi(\fold) \enspace . \qedhere
\]
\end{lem}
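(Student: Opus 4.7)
The plan is to apply Lemma~\ref{lem-sorin} directly with $g=\fnew$ and $w=\wold$. To do this I need two ingredients that are already in hand from the discussion just before the statement: first, that $\wold$ is a super-invariant half-line of $\fnew$, which is precisely what \eqref{wksuper} asserts; second, that the slope of $\wold$ coincides with $\chi(\fold)$, which holds because $\wold$ is by construction an invariant half-line of $\fold$ (the slope of an invariant half-line equals the cycle time, as recalled in Section~\ref{discrete}).

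With these two facts, Lemma~\ref{lem-sorin} yields $\chi(\fnew) \leq \etaold = \chi(\fold)$, which is the desired inequality. I would also note that Lemma~\ref{lem-sorin} is applicable because $\fnew$ is a Shapley operator associated with the fixed strategy $\sigmanew$ of player \MIN, hence is order-preserving and sup-norm nonexpansive and (being polyhedral) admits a cycle time by Kohlberg's theorem.

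There is essentially no obstacle here: the lemma is a one-line consequence of \eqref{wksuper} combined with Lemma~\ref{lem-sorin}. The only thing worth being careful about is to state clearly why the slope of $\wold$ may be identified with $\chi(\fold)$ (so that the conclusion $\chi(\fnew) \leq \etaold$ can be rewritten in the form $\chi(\fnew) \leq \chi(\fold)$).
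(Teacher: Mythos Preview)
Your proof is correct and follows exactly the same approach as the paper: the paper also derives the lemma directly from~\eqref{wksuper} together with Lemma~\ref{lem-sorin}, using that the slope $\etaold$ of $\wold$ equals $\chi(\fold)$.
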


We now examine degenerate iterations.
\begin{lem}\label{lem-cdec}
Let $(\sigmaold)_{k\geq 1}$ be the sequence of strategies defined
in Algorithm~\ref{algo-main},
and assume that $\chi(\fnew)= \chi(\fold)$.
Then, the following statements hold.
\begin{enumerate}
\item\label{lem-cdec2} The half-line $\wnew$ agrees with $\wold$ on the set
of critical nodes of $\fnew$.
\item \label{lem-cdec1}
Every critical node of $\fnew$ is a critical node of $\fold$.
\item\label{lem-cdec3} $\wnew\leq \wold$.
\end{enumerate}
\end{lem}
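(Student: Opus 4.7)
The plan is to dispatch the three items in order, handling Parts~\ref{lem-cdec2} and~\ref{lem-cdec3} directly from Theorem~\ref{th-1} and Corollary~\ref{cor-1} applied to $\bar g := \tfx{(f^{(\sigma_{k+1})})}{\eta^{(k+1)}} - \eta^{(k+1)}$, and then spending the real effort on Part~\ref{lem-cdec1}, which will combine the conservative strategy-improvement rule with a convex-analytic transfer of a subgradient from $v^{(k+1)}$ to $v^{(k)}$. For Part~\ref{lem-cdec2}, the construction $w^{(k+1)} = \Q{(f^{(\sigma_{k+1})})}(w^{(k)})$ and Theorem~\ref{th-1}(ii) give $v^{(k+1)}_C = v^{(k)}_C$ for $C$ the critical set of $f^{(\sigma_{k+1})}$; together with $\eta^{(k+1)} = \eta^{(k)}$ this is the desired identity $w^{(k+1)} = w^{(k)}$ on $C$. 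For Part~\ref{lem-cdec3}, \eqref{wksuper} makes $w^{(k)}$ a super-invariant half-line of $f^{(\sigma_{k+1})}$ of slope $\chi(f^{(\sigma_{k+1})})$, so $v^{(k)}$ is super-harmonic for $\bar g$; by Theorem~\ref{th-1}(i), $v^{(k+1)}$ is the pointwise limit of the non-increasing sequence $(\bar g^{\,j}(v^{(k)}))_j$, hence $v^{(k+1)} \leq v^{(k)}$ and consequently $w^{(k+1)} \leq w^{(k)}$.

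For Part~\ref{lem-cdec1}, set $g := f^{(\sigma_{k+1})}$, $h := f^{(\sigma_k)}$, $\bar h := \tfx{h}{\eta^{(k)}} - \eta^{(k)}$, and let $C$ be the critical set of $g$; I will show that any given $i \in C$ is critical for $h$. The first key observation is that $\sigma_{k+1}$ agrees with $\sigma_k$ on $C$. Lemma~\ref{lem-withmarianne} applied to the super-harmonic vector $v^{(k)}$ of $\bar g$ gives $\bar g_j(v^{(k)}) = v^{(k)}_j$ for $j \in C$, whereas $v^{(k)}$ being harmonic for $\bar h$ gives $\bar h_j(v^{(k)}) = v^{(k)}_j$; translating via \eqref{Shapley-inv} yields $f_j(w^{(k)}(t)) = g_j(w^{(k)}(t)) = h_j(w^{(k)}(t))$ for large $t$, which by the conservative choice in Step~\ref{step-improve} forces $\sigma_{k+1}(j) = \sigma_k(j)$. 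Consequently $\bar g_j = \bar h_j$ as maps $\RX \to \R$ for every $j \in C$. Next pick $M \in \partial \bar g(v^{(k+1)})$ together with a recurrent class $F \subset C$ of $M$ containing $i$ (such a pair exists by definition of criticality).

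The decisive step, which I expect to be the main obstacle, is to show that each row $M_{j.}$ with $j \in F$ lies in $\partial \bar h_j(v^{(k)})$: since $\bar g_j = \bar h_j$, the row $M_{j.}$ already belongs to $\partial \bar h_j(v^{(k+1)})$, giving the subgradient inequality $\bar h_j(v^{(k)}) \geq \bar h_j(v^{(k+1)}) + M_{j.}(v^{(k)} - v^{(k+1)})$; using Part~\ref{lem-cdec2} and $\bar g_j(v^{(k+1)}) = v^{(k+1)}_j$ one gets $\bar h_j(v^{(k)}) = v^{(k)}_j = v^{(k+1)}_j = \bar h_j(v^{(k+1)})$, reducing this to $M_{j.}(v^{(k)} - v^{(k+1)}) \leq 0$; but the stochasticity of $M_{j.}$ and Part~\ref{lem-cdec3} also give $M_{j.}(v^{(k)} - v^{(k+1)}) \geq 0$, so the quantity vanishes and the affine piece $v \mapsto \bar h_j(v^{(k+1)}) + M_{j.}(v - v^{(k+1)})$ coincides with $v \mapsto \bar h_j(v^{(k)}) + M_{j.}(v - v^{(k)})$, proving $M_{j.} \in \partial \bar h_j(v^{(k)})$. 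Finally, using~\eqref{subdiffequiv}, assemble $N \in \partial \bar h(v^{(k)})$ by setting $N_{j.} := M_{j.}$ for $j \in F$ and picking any $N_{j.} \in \partial \bar h_j(v^{(k)})$ otherwise; since $M_{FF}$ is stochastic and $M_{j.}$ is supported in $F$ for $j \in F$, $F$ is a recurrent class of $N$, so $i$ is critical for $\bar h$, and hence for $h = f^{(\sigma_k)}$.
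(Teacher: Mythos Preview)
Your proof is correct and follows the same architecture as the paper's: Parts~\ref{lem-cdec2} and~\ref{lem-cdec3} via Corollary~\ref{cor-1} and Theorem~\ref{th-1}, and Part~\ref{lem-cdec1} by using the conservative rule to obtain $\bar g_j=\bar h_j$ on $C$, then transferring the row $M_{j\cdot}$ from $\partial\bar g_j(v^{(k+1)})$ to $\partial\bar h_j(v^{(k)})$ and assembling a matrix in $\partial\bar h(v^{(k)})$ with $F$ as a final class. The one small difference is in how you establish $M_{j\cdot}v^{(k)}=M_{j\cdot}v^{(k+1)}$: you squeeze it between the subgradient inequality (giving $\leq 0$) and the nonnegativity coming from Part~\ref{lem-cdec3} (giving $\geq 0$), whereas the paper notes directly that since $F$ is a final class of $M$, each such row $M_{j\cdot}$ is supported on $F\subset C$, where $v^{(k)}$ and $v^{(k+1)}$ already coincide by Part~\ref{lem-cdec2}; the paper's route thus makes Part~\ref{lem-cdec1} logically independent of Part~\ref{lem-cdec3}, but since you prove Part~\ref{lem-cdec3} first your argument is perfectly sound.
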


\begin{proof} Let us use the notations: $g:=\fnew$ (as in Algorithm~\ref{algo-detailed}) and $h=\fold$.
By construction and assumption, we have 
$\etaold= \chi(h)=\chi(g)=\etanew$, that we shall also
denote by $\eta$.

\noindent {\em Point~\ref{lem-cdec2}:}
Since, by~\eqref{wksuper}, $\wold$ is a super-invariant half-line of $g$, 
with slope $\etaold=\chi(g)$, and since $\wnew$ is defined as 
$\Q{g}(\wold)$,
the result follows from Corollary~\ref{cor-1}.

\noindent {\em Point~\ref{lem-cdec1}:}
Again, since $\wold$ is a super-invariant half-line of $g$, 
with slope $\etaold=\chi(g)$, we deduce 
 from the definition of $\bar g$ and~\eqref{Shapley-inv}, that
\begin{equation}\label{vksub}
\bar g(\vold) \leq \vold \enspace .
\end{equation}
Then by Lemma~\ref{lem-withmarianne}, $\bar g(\vold)$ agrees with $\vold$
on $C(\bar g)=C(g)$, the set of critical nodes of $g$, and so, the equality 
$g(\wold(t))=\wold(t+1)$ holds on $C(g)$ for $t$ large.
Since $\wold$ is an invariant half line of $\fold$,
we get that 
$f_i(\wold(t))=\fnew_i( \wold(t))=\wold(t+1)=\fold_i(\wold(t))$ for $t$ 
large enough and $i\in C(g)$. Hence, the conservative
selection rule ensures that $\sigmanew(i)=\sigmaold(i)$ for all $i\in C(g)$.
This implies that $g_i=h_i$ for all $i\in C(g)$, and since $\chi(g)=\chi(h)$,
we get from the definitions of $\bar g$ and $\bar h$ that
\begin{equation}\label{strategy-csq}
\bar g_i=\bar h_i \qquad \text{for all} \ i\in C(g) \enspace.
\end{equation}
Observe that $\vnew$ is a fixed-point
of $\bar{g}$, and that $\bar g$ is a polyhedral additively homogeneous
order preserving convex selfmap of $\RX$.
Hence the critical nodes of $\bar g$ are the indices that belong to
a final class of a matrix $M\in \partial \bar g(\vnew)$
(since the elements of $\bar g(\vnew)$ are stochastic matrices, 
all their final classes are recurrent).
Let $F$ be  such a final class.
From~\eqref{subdiffequiv}, 
the line $M_{i\cdot} \in \partial \bar{g}_i(\vnew)$ for $i\in F$,
that is $\bar g_i(v)-\bar g_i(\vnew)\geq M_{i\cdot} (v-\vnew)$ for all
$v\in\RX$.
Since $\vnew$ is a fixed point of $\bar g$, $\vold$ a fixed point of
$\bar h$, and $\vnew$ agrees with $\vold$ on $C(g)$ 
(from Point~\ref{lem-cdec2}),
we get that $\bar g_i(\vnew)=\vnew_i=\vold_i=\bar h_i(\vold)$ for all
$i\in C(g)$. From~\eqref{strategy-csq}, we deduce 
that $\bar g_i(v)-\bar g_i(\vnew)=\bar h_i(v)-\bar h_i(\vold)$
for all $i\in C(g)$ and $v\in\RX$.
Now, since $F$ is a final class
of $M$, hence $F\subset C(g)$, and
$M_{ij}=0$ for $i\in F$ and $j\not\in C(g)$,
we get that $M_{i\cdot} \vnew=M_{i\cdot} \vold$ for $i\in F$.
This implies that  $\bar h_i(v)-\bar h_i(\vold)\geq M_{i\cdot} (v-\vold)$
for all $v\in\RX$ and $i\in F$,
which shows that $M_{i\cdot} \in \partial \bar{h}_i(\vold)$ for $i\in F$.
Let $N:=[n]\setminus F$ and define the matrix
$Q$ such that $Q_{\sx\cdot}=M_{\sx\cdot}$ if $\sx\in F$, and $Q_{\sx\cdot}$
be any element of $\partial \bar{h}_{\sx}(\vold)$ if $\sx\in N$, 
then $Q\in \partial{\bar{h}(\vold)}$.
Hence, the $F\times F$ submatrix of $M$
is also a $F\times F$ submatrix of $Q$, and so
$F$ is a final class of $Q$. Since $\vold$ is a fixed
point of $\bar h$, this implies that $F$ is included in the set of 
critical nodes of $\bar h$, which is also by definition 
the set of critical nodes of $h$.
This shows that all critical nodes of $g$ are also
critical nodes of $h$, and shows Point~\ref{lem-cdec1}.

\noindent {\em Point~\ref{lem-cdec3}:} 
From~\eqref{vksub}, we get that $\bar g(\vold) \leq \vold$, hence 
the sequence $\bar g^k(\vold)$ is nonincreasing and 
$\Q{\bar g}(\vold) \leq \vold$.
Since $\etaold=\etanew$, we get that $\wnew=\Q{g}(\wold) =
t\etaold + \bar g(\vold)\leq \wold$.
\end{proof}

Finally, we prove that the algorithm terminates. 

\begin{thm}\label{thm-algo-terminates}
A strategy cannot be selected twice in Algorithm~\ref{algo-main}, and so,
the algorithm terminates after a finite number of iterations.
\end{thm}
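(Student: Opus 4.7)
My plan is to argue by contradiction: suppose some strategy is selected at two distinct iterations $k<l$, i.e.\ $\balpha_k=\balpha_l$, and derive a contradiction using Lemmas~\ref{lem-dec} and~\ref{lem-cdec} together with the uniqueness statement of Theorem~\ref{th-1}.

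First I would show that every intermediate iteration must be degenerate, that the sets of critical nodes all coincide, and that the relative values agree on them. By Lemma~\ref{lem-dec}, $\eta^{(k)}\geq \eta^{(k+1)}\geq \cdots \geq \eta^{(l)}$ coordinatewise; combined with $\eta^{(k)}=\chi(\fold)=\chi(f^{(\balpha_l)})=\eta^{(l)}$, all these slopes coincide, so each step $j\to j+1$ with $k\leq j\leq l-1$ passes through Step~\ref{step-degenerate}. Iterating Lemma~\ref{lem-cdec}(\ref{lem-cdec1}), the inclusions $C(f^{(\balpha_k)})\supseteq \cdots \supseteq C(f^{(\balpha_l)})=C(f^{(\balpha_k)})$ force all critical sets to equal a common set $C$; iterating Lemma~\ref{lem-cdec}(\ref{lem-cdec2}) then yields that $v^{(k)},\ldots,v^{(l)}$ all agree on $C$. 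From this I would deduce $v^{(k)}=v^{(l)}$: since iteration $l-1\to l$ is degenerate, Step~\ref{step-degenerate} gives $v^{(l)}=\Q{\bar g}(v^{(l-1)})$ with $\bar g:=\overline{f^{(\balpha_l)}}=\overline{f^{(\balpha_k)}}$, which by Theorem~\ref{th-1} is the unique fixed point of $\bar g$ agreeing with $v^{(l-1)}$ on $C$; but $v^{(k)}$ is itself a fixed point of $\bar g$ (as the relative value of an invariant half-line of $f^{(\balpha_k)}$) and it agrees with $v^{(l-1)}$ on $C$ by the previous observation, so the characterization forces $v^{(k)}=v^{(l)}$.

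The main obstacle is then to extract, from each degenerate non-stopping iteration, a strict decrease of $v$ in at least one coordinate, which will contradict $v^{(k)}=v^{(l)}$. Specifically, I would prove the claim: if iteration $j\to j+1$ is degenerate and the algorithm does not stop at iteration $j$, then $v^{(j+1)}<v^{(j)}$ in at least one coordinate. Lemma~\ref{lem-cdec}(\ref{lem-cdec3}) already gives $v^{(j+1)}\leq v^{(j)}$; if equality held then $w^{(j+1)}=w^{(j)}$, and the invariance $f^{(\balpha_{j+1})}\comp w^{(j+1)}(t)=w^{(j+1)}(t+1)$ combined with the improvement identity $f\comp w^{(j)}=f^{(\balpha_{j+1})}\comp w^{(j)}$ from Step~\ref{step-improve} would yield $f\comp w^{(j)}(t)=w^{(j)}(t+1)$, triggering the stop of Step~\ref{step-2}. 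Since the iterations $k,\ldots,l-1$ are all degenerate and none stops (as $l>k$), applying this claim at each step and using the coordinatewise monotonicity from Lemma~\ref{lem-cdec}(\ref{lem-cdec3}) gives $v^{(l)}<v^{(k)}$ in at least one coordinate, contradicting $v^{(k)}=v^{(l)}$. Consequently no strategy can be selected twice, and since $\Am$ is finite the algorithm terminates after at most $|\Am|$ iterations.
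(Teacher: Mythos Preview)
Your proof is correct and follows essentially the same approach as the paper's: both argue by contradiction that a repeated strategy forces all intermediate iterations to be degenerate with equal critical sets, use the uniqueness of harmonic vectors agreeing on the critical set (you invoke Theorem~\ref{th-1}, the paper equivalently invokes Corollary~\ref{cor-1}) to conclude $v^{(k)}=v^{(l)}$, and then combine monotonicity with the stopping criterion to reach a contradiction. The only cosmetic difference is that the paper deduces directly $v^{(k)}=v^{(k+1)}=\cdots=v^{(l)}$ from $v^{(k)}=v^{(l)}$ together with Lemma~\ref{lem-cdec}(\ref{lem-cdec3}), and then observes that $w^{(k)}=w^{(k+1)}$ forces the stop at iteration~$k$; you package this same implication as a separate ``strict decrease'' claim, which is logically equivalent (and in fact you only need to apply your claim once, at step~$k$, rather than at each step).
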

\begin{proof}
Assume by contradiction that the same strategy is selected twice
in Algorithm~\ref{algo-main}, that is $\sigmaiter s=\sigmaiter m$ for some 
iterations $1\leq s<m$ of the algorithm before it stops.
Then, $\chi(f^{(\sigmaiter s)})=\chi(f^{(\sigmaiter m)})$ and since
by Lemma~\ref{lem-dec}, $\chi(f^{(\sigmaiter s)}) \geq \chi(f^{(\sigmaiter {s+1})})\geq
 \cdots\geq \chi(f^{(\sigmaiter m)})$, we get the equality
 $\chi(f^{(\sigmaiter s)})= \chi(f^{(\sigmaiter {s+1})})=  \cdots=\chi(f^{(\sigmaiter m)})$.
Hence, by Lemma~\ref{lem-cdec}, Part~\ref{lem-cdec1}, 
we have that $ C(f^{(\sigmaiter m)})\subset C(f^{(\sigmaiter {m-1})})\subset\cdots
\subset C(f^{(\sigmaiter s)})$ and since $\sigmaiter s=\sigmaiter m$,
we get the equality $ C(f^{(\sigmaiter m)})=C(f^{(\sigmaiter {m-1})})=\cdots=
C(f^{(\sigmaiter s)})$.
So by Lemma~\ref{lem-cdec}, Part~\ref{lem-cdec2},  
$w^{(s)}$ and $w^{(m)}$ are both invariant half-lines of $f^{(\sigmaiter s)}$
with slope $\chi(f^{(\sigmaiter s)})$, 
that agree on $C(f^{(\sigmaiter s)})$. Hence by Corollary~\ref{cor-1},
 $w^{(s)}=w^{(m)}$.
Since by Lemma~\ref{lem-cdec}, Part~\ref{lem-cdec3},
we have $w^{(s)}\geq w^{(s+1)}\geq \cdots \geq w^{(m)}$,
it follows that $w^{(s)}=\cdots =w^{(m)}$. In particular, $w^{(s)}=w^{(s+1)}$.
Hence, $w^{(s)}(t+1)=w^{(s+1)}(t+1)=f^{(\sigmaiter {s+1})}\comp w^{({s+1})}(t)=
f^{(\sigmaiter {s+1})}\comp w^{(s)}(t)=f\comp w^{(s)}(t)$ for $t$ large enough.
It follows that $w^{(s)}$
is an invariant half-line of $f$, and so, the algorithm stops at step $s$,
which contradicts the existence of iteration $m$, and so the same
strategy cannot be selected twice in Algorithm~\ref{algo-main}.

Since the sets $A_i$ are finite, the number of strategies 
(the elements of $\Am$) is also finite, and since a strategy cannot be 
selected twice, Algorithm~\ref{algo-main} stops after a finite number 
of iterations, that is bounded by the number of strategies.
\end{proof}

\section{Ingredients of Algorithm~\ref{algo-main} or~\ref{algo-detailed}:
one player games algorithms}
\label{sec-detailed}

As said in Section~\ref{subsection-practical}, each basic step of the policy
iteration algorithm for multichain mean payoff zero-sum two player games
(Algorithm~\ref{algo-main} or~\ref{algo-detailed})
concerns the solution of one player games, also called
stochastic control problems or Markov decision processes,
with finite state and action spaces: 
a mean payoff problem for Systems~\eqref{DP-syst-step-init}
and~\eqref{DP-syst-step-val}, an infinite horizon problem stopped at
the boundary for System~\eqref{degenerate-syst}, and the set of
critical nodes of the corresponding dynamic programming operator in
Step~\ref{step-critical-graph}. 
We recall here the policy iteration algorithm for solving stochastic control
problems, with either infinite horizon or mean payoff, and the
algorithm proposed in~\cite[\S~6.3]{spectral2} for computing a
critical graph, and explain how all these algorithms are applied in
Algorithm~\ref{algo-main} or~\ref{algo-detailed}. By doing so,
we shall also see that the classical Howard / Denardo-Fox algorithm
can be thought of as a special case of these algorithms, 
in which the second player has no choices of actions.

In all the section, we consider the following dynamic
programming or Shapley operator of
a one player game with finite state and action spaces:
$g$ is a map from $\RX$ to itself, given by~:
\begin{equation} \label{eq1opdefG}
[g(v)]_{\sx}:= \, \max_{b \in \B_\sx } \, \fvia{G}{v}{\sx}{b}
 \qquad  \forall \sx \in \X,\; v\in \RX \enspace, 
\end{equation}
where
\begin{equation} \label{ILPIG}
\fvia{G}{v}{\sx}{b} \, = \,  \sum_{\sy \in \X} P_{\sx \sy}^{b}\,
\valx_\sy \, + \, r_\sx^{b}  \enspace,
\end{equation}
the vectors $P_{\sx \cdot}^b$ are substochastic vectors,
for all
$\sx\in\X$ and $b\in \B_\sx$, and $\B_{\sx}$ are finite sets,
for all $\sx \in \X$. 
Equivalently, $g$ is a convex additively subhomogeneous order preserving 
polyhedral selfmap of $\RX$.

Since player \MIN\ does not exist, 
the set of feedback strategies for player \MAX, $\Bm$, is
given by  $\Bm := \set{\bbeta : \X \rightarrow \Bg}{\bbeta(\sx) \in \B_{\sx}
  \, \forall \sx \in \X}$, where $\Bg$ contains all the sets $\B_{\sx}$.
For each $\bbeta\in\Bm$, we denote by
$g^{(\bbeta)}$ the self-map  of $\RX$ given by:
\[ g^{(\bbeta)}_{\sx}(v):= \fvia{G}{v}{\sx}{\bbeta(\sx)}
 \qquad  \forall \sx \in \X,\; v\in \RX \enspace. \]
We also denote by $r^{(\bbeta)}$ the vector of $\RX$  such that
$r^{(\bbeta)}_\sx= r_\sx^{\bbeta(\sx)} $ and $P^{(\bbeta)}$ the $n\times n$ 
matrix such that $P^{(\bbeta)}_{\sx\sy}=P^{(\bbeta(\sx))}_{\sx\sy}$,
then $g^{(\bbeta)}:v\mapsto P^{(\bbeta)} v+r^{(\bbeta)}$.

\subsection{Policy iterations for one player games with discounted payoff} 
\label{howardalg}

System~\eqref{degenerate-syst} consists in 
finding the solution $v$ of the equation
$v=\bar g(v)$ with $v=u$ on $C(\bar g)$ 
where $u\in\RX$ is super-harmonic with respect to $\bar g$, $\bar g(u)\leq u$,
and $g$ is as in~\eqref{eq1opdefG} with~\eqref{ILPIG}.
The solution $v$ is thus the value of a one player game 
with infinite horizon stopped
when reaching the set $C(\bar g)$ whose transition probabilities
are given by the $P_{\sx \sy}^{b}$, instantaneous reward is given by the
$r_\sx^{b}$ and final reward is given by $u_\sx$, when the game is in state $\sx\in C(\bar g)$.
This value function can be obtained using the
classical policy iteration algorithm of Howard~\cite{Howard60} for a
one player game. 
From Theorem~\ref{th-1}, $v$ is solution of the above equation, if and
only if $v_C=u_C$ and $v_N$ is a fixed point of the
convex polyhedral additively subhomogeneous order preserving selfmap $h$ of
$\R^N$, with $C=C(\bar g)$, $N=\X\setminus C$, and 
$h$ defined as in Theorem~\ref{th-1}, Point (iii),
 with $g$ replaced by $\bar g$.
One can also consider the equivalent equation $v=h(v)$ with
$h_i=\bar g_i$ for $i\in N$ and $h_i(v)=u_i$ for $i\in C$ and $v\in\RX$.
In that case, $h$ is a convex polyhedral additively subhomogeneous order 
preserving selfmap of $\RX$.

In these two settings, we need to solve an equation of the 
form $v=g(v)$, where $g$ is of the form~\eqref{eq1opdefG}, 
and $g$ has no critical node: $C(g)=\emptyset$.
From~\cite[Corollary~1.3]{spectral2}, $g$ has
a unique fixed point and all the maps $g^{(\bbeta)}$ 
with $\bbeta\in \Bm$ have a unique fixed point (since their 
critical nodes are necessarily critical nodes of $g$).
The policy iteration algorithm of Howard applied to this equation
is then given by Algorithm~\ref{algo-Howard}. 

\begin{algo}[Policy iteration of Howard~\cite{Howard60} for stochastic control problems]
\label{algo-Howard}
\

\emph{Input}: A map $g$ of the form~\eqref{eq1opdefG} with no critical node.

\emph{Output}: The fixed point of $g$ and an optimal policy 
$\bbeta\in\Bm$.

\begin{enumerate}
\item  {\em Initialization}: Set $k=0$.
Select an arbitrary strategy $\deltainit\in\Bm$.
\item\label{howard-step-2}
 Compute the value of the game $\vold$ with fixed feedback
strategy $\deltaold$, that is the solution of the linear system:
\[
\vold \ = \ g^{(\deltaold)}(\vold) \enspace.
\]
\item\label{stopHoward} If $\vold=g(\vold)$, or equivalently if
 $\deltanew=\deltaold$ is solution of~\eqref{improvehoward} below,
then  the algorithm stops and returns $\vold$ and $\deltaold$.
\item\label{step-improvehoward} 
Otherwise, improve the policy $\deltanew\in\Bm$ for the value $\vold$~: 
\begin{equation}\label{improvehoward}
\deltanew(\sx) \ \in \ \underset{b \in
  \B_{\sx}}{\operatorname{argmax}} \ \fvia{G}{\vold}{\sx}{b} \
  \ \ \forall \sx \in [n]. 
\end{equation}
 \item Increment $k$ by one and go to
  Step~\ref{howard-step-2}.
\end{enumerate}
\end{algo}

It is known~\cite{Howard60} that $\vnew\leq \vold$ and that the algorithm stops 
after a finite number of steps.

\subsection{Policy iteration for multichain one player games}
\label{sec-detailed-sub1}

Consider a one player game with dynamic programming operator $g$ 
given by~\eqref{eq1opdefG} and mean payoff. 
Then, as explained in Section~\ref{discrete} in the more general 
two player case, the mean payoff of the game
is the slope $\eta$ of any invariant half line $(\eta,\valx)$ of $g$,
which is also any solution of the following couple system
(see Equation~\eqref{DPsyst2P1}):
\begin{equation}\label{DPsyst1P} \left\{ \begin{array}{rcl}
  \eta &\,=\,& \fr{g}(\eta)
\\
\eta + \valx  &=&  \tfx{g}{\eta}(\valx) \enspace.
\end{array} \right. 
\end{equation}
where $\fr{g}$ and $\tfx{g}{\eta}$ are defined in~\eqref{def-frec}
and~\eqref{eta-sets} respectively.
In the present one player case, they are reduced to:
\begin{equation}\label{defghat}
[\fr{g}(\eta)]_{\sx} \, := \, \max_{b \in \B_\sx} \, \frvia{G}{\eta}{\sx}{b}
\qquad \text{and} \qquad
[\tfx{g}{\eta}(\valx)]_{\sx} \, := \, \max_{b \in \tBg{\sx}{\eta}} \,
\fvia{G}{v}{\sx}{b} \enspace,
\end{equation}
with
\begin{equation}\label{defGhat}
\frvia{G}{\eta}{\sx}{b} =  \sum_{\sy \in \X} P_{\sx \sy}^{b}\, \eta_\sy 
\qquad \text{and} \qquad
\tBg{\sx}{\eta} := \underset{b \in \B_\sx}{\operatorname{argmax}} \; \left\{
 \sum_{\sy \in \X} P_{\sx \sy}^{b}\, \eta_\sy \right\} \enspace,
\end{equation}
for all $\eta,\valx\in\RX$, $\sx \in [n]$, $b\in \Bg$.
We refer also to~\cite{Denardo-Fox68,puterman} for the existence of solutions 
to System~\eqref{DPsyst1P},
and for the proof that $\eta$ solution of this system is
the mean payoff of the game in this one player context.
The following algorithm for multichain mean payoff Markov decision processes
was introduced by Howard~\cite{Howard60} and proved to converge by
Denardo and Fox~\cite{Denardo-Fox68}:

\begin{algo}[Policy iteration algorithm for multichain mean payoff one player games]
\label{algo-DF}
\

\emph{Input}: A map $g$ of the form~\eqref{eq1opdefG} with~\eqref{ILPIG}, 
and the notations~{\rm (\ref{defghat},\ref{defGhat})}.

\emph{Output}: An invariant half-line $(\eta,v)$ of $g$ and an optimal policy
$\bbeta\in\Bm$.

\begin{enumerate}
\item  {\em Initialization}: Set $k=0$.
Select an arbitrary strategy $\deltainit\in\Bm$.
\item \label{step-val-DF} For each final class $F$ of
  $P^{(\deltaold)}$, denote by $i_F$ the minimal index of the elements
  of $F$, and define $S$ as the set of all these indices $i_F$. Compute
  the couple  $(\etaold,\vold)$ for policy $\deltaold$ 
  solution  of 
\begin{equation} \label{DF-LS}
\left\{ \begin{array}{r  c  l l}
  \etaold_{\sx} &=& \frvia{G}{\etaold}{\sx}{\deltaold(\sx)} & \sx \in [n]  \setminus S\\
  \etaold_{\sx} + \vold_{\sx} &=& \fvia{G}{\vold}{\sx}{\deltaold(\sx)}  & \sx \in [n] \\
  \vold_{\sx} &=& 0 & \sx \in S \enspace. \\
\end{array}
\right.
\end{equation}
\item \label{stop-DF}
If $(\etaold,\vold)$ is solution of~\eqref{DPsyst1P}, or equivalently if
 $\deltanew=\deltaold$ is solution of~\eqref{improveonemulti} below,
then the algorithm stops and returns $(\etaold,\vold)$ and $\deltaold$.
\item \label{step-improve-DF}  Otherwise, improve the policy $\deltanew\in\Bm$ for ($\etaold$, $\vold$) in a conservative way,
 that is choose $\deltanew\in\Bm$  such that~: 
\begin{equation}\label{improveonemulti}
\left\{ \begin{array}{l}
\displaystyle 
\deltanew(\sx) \ \in \ \underset{ b \in 
  \tBg{\sx}{\etaold}}{\operatorname{argmax}} \, \fvia{G}{\vold}{\sx}{b} \\
\deltanew(i) = \deltaold(i)\text{ if }\deltaold(i)\text{ is optimal,}  
\end{array}\right. \quad \text{for all } i\in\X\enspace.
\end{equation}
 \item Increment $k$ by one and go to Step~\ref{step-val-DF}.
\end{enumerate}
\end{algo}

The justifications and details of Algorithm~\ref{algo-DF} can be found
in~\cite{Denardo-Fox68,puterman} and are recalled in Appendix.
Solving System~\eqref{DF-LS} turns out to be a critical step.
This can be optimized by exploiting the structure of the system,
we discuss this issue in Appendix.
As explained in Section~\ref{subsection-practical}, another way to solve 
a multichain mean payoff Markov decision process may be to use 
Algorithm~\ref{algo-main} or~\ref{algo-detailed} in the particular
case of a one-player game, with maximizations instead of minimizations.
In order to compare it with Algorithm~\ref{algo-DF}, we 
rewrite below Algorithm~\ref{algo-detailed}
in that case, with the above notations.
Note that in the one-player case, 
the map $g$ of Step~\ref{step-critical-graph} of Algorithm~\ref{algo-detailed}
 is affine,  hence
its critical graph reduces to the final graph of its tangent matrix.

\begin{algo}[Specialization of Algorithm~\ref{algo-detailed} to the one player case]
\label{algo-detailed-oneplayer}
\ 

\emph{Input}: A map $g$ of the form~\eqref{eq1opdefG} with~\eqref{ILPIG}, 
and the notations~{\rm (\ref{defghat},\ref{defGhat})}.

\emph{Output}: An invariant half-line $(\eta,v)$ of $g$ and an optimal policy
$\bbeta\in\Bm$.

\begin{enumerate}
\item \label{step-init-detail-oneplayer}  {\em Initialization}:
Set $k=0$. Select an arbitrary strategy $\deltainit\in\Bm$.
Compute the couple ($\eta^{(0)}$, $v^{(0)}$) solution of 
   \begin{equation} \label{DP-syst-step-init-oneplayer} \left\{ \begin{array}{r l}
	\eta_{\sx}^{(0)} & = \,	\frvia{G}{\eta^{(0)}}{\sx}{\deltainit(\sx)}  \\
	\eta_{\sx}^{(0)} + \valx_{\sx}^{(0)} & = \,\fvia{G}{\valx^{(0)}}{\sx}{\deltainit(\sx)}
     \end{array} \right. \quad \text{for all } i \in [n]\enspace.
   \end{equation}
\item \label{step-stop-detail-oneplayer} If $\etaold$ and $\vold$ satisfy
  System~\eqref{DPsyst1P}, or equivalently if $\deltanew=\deltaold$ 
is solution of~\eqref{sys-improve-oneplayer} below, then 
the algorithm stops and returns $(\etaold, \vold)$ and  $\deltaold$.

\item \label{step-improve-detail-oneplayer} Otherwise, improve the policy $\deltaold\in\Bm$ for $(\etaold, \vold)$
  in a conservative way, that is choose $\deltanew\in\Bm$ such that
\begin{equation}\label{sys-improve-oneplayer}
\left\{ \begin{array}{l}
\displaystyle 
\deltanew(\sx) \ \in \ \underset{ b \in 
  \tBg{\sx}{\etaold}}{\operatorname{argmax}} \, \fvia{G}{\vold}{\sx}{b} \\
\deltanew(i) = \deltaold(i)\text{ if }\deltaold(i)\text{ is optimal,}  
\end{array}\right. \quad \text{for all } i\in\X\enspace.
\end{equation}

\item \label{step-val-detail-oneplayer} Compute a couple ($\etanew$, $\vdemi$) for policy $\deltanew$ solution of 
   \begin{equation}\label{DP-syst-step-val-oneplayer} \left\{ \begin{array}{r l} 
	\etanew_{\sx} & = \, \frvia{G}{\etanew}{\sx}{\deltanew(\sx)} \,    \\
       	\etanew_{\sx} + \vdemi_{\sx}  & =  \,  \fvia{G}{\vdemi}{\sx}{\deltanew(\sx)} 
     \end{array} \right.  \quad \text{for all } i\in\X\enspace.
   \end{equation}
If $\etanew \ne \etaold$ then set $\vnew =
   \vdemi$ and go to
   step~\ref{step-increment-detail-oneplayer}. Otherwise, the iteration is
   degenerate. 
 \item \hspace{-.8em} i) \label{step-critical-graph-oneplayer} 
Compute $C$ the set of final nodes of the matrix $P^{(\delta_{k+1})}$.
\addtocounter{enumi}{-1}
\item \hspace{-.8em} ii) Compute the solution $\vnew$ of: 
\begin{equation}\label{degenerate-syst-oneplayer} \left\{ \begin{array}{l c l l}
   \vnew_{\sx} &=& \fvia{G}{\vnew}{\sx}{\deltanew(\sx)} - 
\etanew_\sx  & \sx \in \X\setminus C \\
   \vnew_{\sx} &=& \vold_{\sx} & \sx \in C \enspace.
\end{array} \right. 
\end{equation} 
 \item \label{step-increment-detail-oneplayer} Increment $k$ by one and go to
  Step~\ref{step-stop-detail-oneplayer}.
\end{enumerate}
\end{algo}

Systems~\eqref{DP-syst-step-init-oneplayer} and~\eqref{DP-syst-step-val-oneplayer}  are of the form:
\begin{equation} \label{LSDF}
\left\{ \begin{array}{r  c l}
  \eta &=& P \, \eta \\
  \eta + \valx &=& P \, \valx + r\enspace,
\end{array}
\right.
\end{equation}
where $r=r^{(\delta)}\in \RX$ and $P=P^{(\delta)}$ is a stochastic matrix, with
$\delta=\deltainit$ or $\deltanew$.
It can be shown that the solution $\eta$ of such a system is unique, 
that one can eliminate for each final class $F$ of $P$ one of the equations
$\eta_i=(P \eta)_i$  with index $i\in F$,  and
that $\valx$ is defined up to an element of the kernel of $I - P$,
the dimension of which is equal to the number of final classes of $P$.
When this number is strictly greater than one, and 
$\vnew$ is chosen to be any solution $\vdemi$ of
\eqref{DP-syst-step-val-oneplayer} in Algorithm~\ref{algo-detailed-oneplayer},
the algorithm may cycles, see Section~\ref{exemple} for an example
in the two player case.
One way to handle this~\cite{Denardo-Fox68,puterman}, is
either to fix to zero the value of $\mu_F \valx$ for each 
invariant measure $\mu_F$ of $P$ with support in a final class $F$ of $P$,
or to fix to zero the components of $\valx$ with indices in some set $S$
containing exactly one node of each final class of $P$.
In these two cases, the solution $\valx$ of~\eqref{LSDF} become unique.
Moreover, if in Algorithm~\ref{algo-detailed-oneplayer},
\eqref{DP-syst-step-val-oneplayer} is combined 
with either the conditions  $\mu_F \vdemi=0$
or the conditions $\vdemi_S=0$
with $S$ chosen in a conservative way, that is such that the same
index is chosen in $F$ for iterations $k$ and $k+1$,
if $F$ is a final class of $P^{(\deltanew)}$ 
which is also a final class of $P^{(\deltaold)}$,
then $\vdemi=\vold$ on the set of final nodes of $P^{(\deltanew)}$
when $\etanew=\etaold$, which implies that $\vdemi=\vnew$, hence
Step~\ref{step-critical-graph-oneplayer} of
Algorithm~\ref{algo-detailed-oneplayer} becomes useless.
This shows that Algorithm~\ref{algo-DF} is equivalent to 
Algorithm~\ref{algo-detailed-oneplayer}, where
\eqref{DP-syst-step-val-oneplayer} is combined 
with the conditions $\vdemi_S=0$, where $S$ is the set of
minimal indices of each final class of $P^{(\deltanew)}$.
In other words, Algorithm~\ref{algo-DF} is a particular realization of
Algorithm~\ref{algo-detailed-oneplayer}, where one chooses one special 
solution $\vdemi=\vnew$ of~\eqref{DP-syst-step-val-oneplayer} at each 
iteration of the algorithm, even when $\etanew\neq \etaold$.
Denardo and Fox proved~\cite{Denardo-Fox68,puterman} 
that the sequence of couples
$(\etaold,\vold)_{k\geq 1}$ of Algorithm~\ref{algo-DF} is non 
decreasing in a lexicographical order, meaning that $\etanew \geq
\etaold$, with $\vnew \geq \vold$ when $\etanew = \etaold$, and that
Algorithm~\ref{algo-DF} stops after a finite number of iterations
(when the sets of actions are finite).
Indeed, the convergence of Algorithm~\ref{algo-main},
proved in Section~\ref{convergence}, shows that this also holds 
for the little more general Algorithm~\ref{algo-detailed-oneplayer}.

\subsection{Critical graph}
\label{degenerateCase}

When a degenerate iteration ($\etanew=\etaold$) occurs in
Step~\ref{step-val} of Algorithm~\ref{algo-main}, one has to compute
the critical nodes of $g := \fnew$, that is that of $\bg$.
This can be done by applying the techniques of~\cite[\S~6.3]{spectral2} ,
leading to Algorithm~\ref{algo-critique} below.
More precisely, one applies first the followings steps to the map $\bg$ and
its harmonic vector $\vdemi$, then apply Algorithm~\ref{algo-critique}.

Consider an additively homogeneous map $g$ whose coordinates are defined as in
\eqref{eq1opdefG}  with~\eqref{ILPIG}, and $u$ a harmonic vector of $g$.  
For any set $\sP$ of stochastic matrices, we define $\gf(\sP)$ as the
union of the graphs of the matrices $M_{FF}$, where $M\in \sP$ and $F$
is a final class of $M$. 
Define 
\begin{equation}\label{applycritical}
\sB_{\sx} = \set{b \in
  \B_{\sx}}{ \fvia{G}{u}{\sx}{b} = u}\quad \text{and}\quad
\sP_{\sx}= \set{P_{\sx\cdot}^{b}}{b \in \sB_{\sx}}\enspace.\end{equation}
Then, the critical graph of $g$ is given by 
\begin{equation}\label{critical-final}
 \gc (g) = \gf (\partial g(u)), \quad\text{where}\quad
\partial g(u) = \vex(\sP_1) \times \cdots  \times \vex(\sP_n)\enspace,
\end{equation}
and $\vex(\cdot)$ denotes the convex hull of a set.
The following algorithm computes the graph in~\eqref{critical-final} for a
general family $\{\sP_{\sx}\}_{\sx\in \X}$,
where $\sP_{\sx}\subset \RX$ is a nonempty finite set of stochastic vectors.
Note that any such family $\{\sP_{\sx}\}_{\sx\in \X}$ corresponds
to the map $g:\RX\to\RX$ such that 
\begin{equation}\label{Ptog}
 [g(v)]_i= \max_{p\in \sP_i} p v\quad \text{for all}\; i\in \X\enspace,
\end{equation}
which has $u=0$ as a harmonic vector, and is of the above form.
Hence the algorithm below corresponds also to the 
computation of the critical graph of this map $g$.

Before writing the algorithm, we recall some definitions of graph theory
(see for instance~\cite{cormen}). We define a graph
$G:=(V,E)$ as a finite set of vertices (or nodes) $V$
and a set of edges (or arcs) $E := \set{(i,j)}{i,j \in V}$. A 
path of length $l\geq 0$ is a sequence $(i_0, \dots, i_l)$ such that $i_k
\in V$ for $k\in \{0,\dots,l\}$ and $(i_{k}, i_{k+1}) \in E$ for $k <l$. 
A strongly connected component of G is the restriction $G|_{V'}$ 
of $G$ to some subset of nodes $V'\subseteq V$, 
that is the graph $(V',E')$ with
$E':= \set{(i,j)\in E}{i,j \in V'}$, where $V'$ is such that there
exists a path from each node $i \in V'$ to every node $j\in V'$. A
strongly connected component $G'$ is called trivial if it consists in 
exactly one node and no arcs. We define a final class 
of $G=(V,E)$ as a non trivial strongly connected component $G'=(V',E')$ of
$G$ such that there exists no arc $(i,j) \in E$ with $i\in V'$
and $j\in V \setminus V'$. 
Note that the strongly connected components of a graph can be find
using Tarjan algorithm, see~\cite{cormen}.  

\begin{algo}[Algorithm to compute the critical graph, compare with~\protect{\cite[\S~6.3]{spectral2}}]
\label{algo-critique}
\

\emph{Input}: $(\sP_1, \cdots, \sP_n)$ where $\sP_{\sx}\subset \RX$
is a finite set of stochastic vectors for $i \in \X$.

\emph{Output}: A graph depending on $\sP_1, \cdots, \sP_n$,  equal
to $\gf(\vex(\sP_1) \times \cdots  \times\vex(\sP_n))$ if all the $\sP_i$ are nonempty; and its set of nodes.

\begin{enumerate}
\item Set $F^{(0)} = \emptyset$, $I^{(0)} = [n]$, 
$\grf^{(0)} =  \emptyset$, $\sQ^{(0)}_{\sx} =
  \sP_{\sx}$ for $\sx\in \X$, and $k=0$.
\item \label{algo-graph-critique-stop} 
If all the sets $\{\sQ^{(k)}_{\sx}\}_{\sx\in I^{(k)}}$
  are empty, then the algorithm stops and returns $\grf^{(k)}$ and  $F^{(k)}$.
\item \label{algo-graph-critique-step2}  Otherwise, build the graph
  $G = (I^{(k)}, E)$ with set of nodes $I^{(k)}$, and set of arcs 
$E = \set{(i,j)\in I^{(k)}\times  I^{(k)}}{p_j \ne 0\;\text{for some } 
p \in \sQ^{(k)}_i}$. Set $F$ as the union of final classes of~$G$. 
\item Put $I^{(k+1)}=I^{(k)}\setminus F$ and $F^{(k+1)} = F^{(k)} \cup F$.
\item Set $\grf^{(k+1)} = \grf^{(k)} \cup G|_{F}$ where
  $G|_{F}$ denotes the restriction of $G$ to ${F}$. 
\item For all $\sx\in I^{(k+1)}$, define the sets
  $\sQ^{(k+1)}_{\sx}\subset \R^{I^{(k+1)}}$ of row vectors obtained by
  restricting to $I^{(k+1)}$ the vectors $p \in \sQ^{(k)}_{\sx}$ such
  that $\sum_{j\in I^{(k+1)}} p_j = 1$. 
\item Increment $k$ by one, and go to Step~\ref{algo-graph-critique-stop}.
\end{enumerate}
\end{algo} 

The convergence (after at most $n$ iterations) of this algorithm 
follows from variants of Lemmas~4.7 and 4.9 of~\cite{spectral2},
applied to the maps $g_k$ constructed by~\eqref{Ptog} from the
families $(\sQ^{(k)}_{\sx})_{\sx\in\X}$.
Indeed, if all the $\sQ^{(k)}_{\sx}$ with  $\sx\in I^{(k)}$
are nonempty, the map $g_{k}$ 
is a map from $\RX$ to itself and 
Lemma~4.7 says that $g_{k}$ has at least one invariant
critical class, which implies that the set $F$ of
 Step~\ref{algo-graph-critique-step2} is nonempty.
Moreover, Lemma~4.9 says that, if all the $\sQ^{(k+1)}_{\sx}$ 
with  $\sx\in I^{(k+1)}$ are nonempty,
the critical graph of $g_{k}$ is equal to
the union of $G|_{F}$ with the critical graph of the map $g_{k+1}$.

In order to generalize these arguments, one need to extend the notion 
of critical graph to the case of a map $g$ from $(\R\cup\{-\infty\})^n$
to itself, of the form~\eqref{Ptog} with general
families $\{\sP_{\sx}\}_{\sx\in \X}$ of (possibly empty) finite sets 
of stochastic vectors (or of the form  \eqref{eq1opdefG}  
with~\eqref{ILPIG}, with a harmonic vector $u\in (\R\cup\{-\infty\})^n$).
For instance, define the critical graph of $g$ as 
the restriction to the set
 of nodes $i\in\X$ such that $\sP_{\sx}$ is nonempty (or $u_\sx\neq -\infty$)
of the critical graph of $g\vee \mathrm{id}$, where $\mathrm{id}$ is 
the identity map and $\vee$ denotes the supremum operation.
Then, the identically $-\infty$ map has no critical class,
any map $g$ which is not identically $-\infty$ has an invariant 
critical class, and the above recurrence formula for critical graphs is true
even if $g_{k+1}$ takes $-\infty$ values.
This shows that Algorithm~\ref{algo-critique} computes the
critical graph of the map $g$ associated to the family
$\{\sP_{\sx}\}_{\sx\in \X}$, even if some of the sets of the family
are empty.

Note that since Tarjan algorithm has a linear complexity in the number
of arcs of a graph, the complexity of the above algorithm 
is at most in the order of $nm$, where $m$ is the sum of 
the number of arcs of all the elements of $\sP_i$, $i\in\X$.
This is comparable with the complexity of solving the linear systems 
of the form~\eqref{DF-LS} by LU solvers, hence with the
other steps of Algorithm~\ref{algo-detailed}.

\section{An example with degenerate iterations}\label{exemple}

In this section, we present an example of zero-sum two player
stochastic game for which we encounter a degenerate
iteration when using the policy iteration algorithm for the
mean payoff problem, and showing that Step~\ref{step-degenerate}
 of Algorithm~\ref{algo-main} is essential to obtain 
the convergence of the algorithm.
 
Before doing this, let us note that some degenerate cases may be not 
so problematic.
Indeed, as observed before, the map $\bar g$ of Step~\ref{step-critical-graph}
 of Algorithm~\ref{algo-detailed}
is a polyhedral order preserving additively homogeneous convex map.
By~\cite[Theorem~1.1]{spectral2}, the set of fixed points of $\bar g$ is 
isomorphic to a convex set which dimension is the number of 
strongly connected components of the critical graph
of $\bar g$ and which is invariant by the translations by a constant function.
In particular, 
if the number of strongly connected components of the critical graph
is equal to one, then the set of fixed points of $\bar g$ is exactly equal
to the translations of $v'$ by a constant, hence $v^{(k+1)}-v'$ is a
constant function.
Since all the maps considered in Algorithm~\ref{algo-detailed} are additively
homogeneous, this implies that taking $v'$ instead of $v^{(k+1)}$,
that is applying the same steps as in the nondegenerate case,
does not change the sequence of policies $(\sigma_k)$, and 
the invariant half lines are just translated by a constant after 
this degenerate iteration. Hence, the second part of 
Step~\ref{step-critical-graph} may be avoided in Algorithm~\ref{algo-detailed},
when one encounters only such degenerate iterations.
However, to know that $\bar g$ has only one strongly connected component
in its critical graph, one need to apply the 
the first part of Step~\ref{step-critical-graph}.

We show now an example for which degenerate iterations occur with
two strongly connected components of the critical graph of $\bar g$.
We shall call these iterations \new{strongly degenerate}.

We consider a directed graph, with a set of nodes (or edges) $\X$ 
and a set of arcs $E \subset \X \times \X$, in which each arc 
$(i,j)$ is equipped with a weight $r_{ij}\in \R$, and consider the
map $f$ from $\R^n$ to itself, defined by:
\begin{equation}\label{richman-game}
f_{\sx}(\valx)= \frac 1 2 \left(\max_{j:\;(i,j)\in E} (r_{ij}+\valx_j) \;+ \;\min_{j:\;(i,j)\in E} (r_{ij} +\valx_j)\right)\enspace.
\end{equation}
When the value of $\valx$ is fixed at some ``boundary'' points, 
and the weights $r_{ij}$ are independent of $j$, the map $f$ 
arises as the dynamic programming operator 
of the ``tug of war'' game~\cite{peres}, which can viewed also as
a discretization of the infinity Laplacian operator.
Moreover the case where all the weights $r_{ij}$ are equal to zero
corresponds to a class of auction games, called Richman game~\cite{loeb}.
Therefore, the above map $f$ appears as the dynamic programming
operator of a variant of these games with additive reward and
mean payoff.

We apply the policy iteration algorithm to such a game, with a graph
of $5$ nodes and complete set of arcs $E = [5] \times [5]$.
Hence, the action spaces $A_i$ and $B_i$ in every state $i\in\X$
can be identified with the set $[5]$. 
The weight of each arc $(i,j) \in E$ is defined as the entry
$r_{ij}$ of the following matrix~:
\begin{equation} \nonumber
 r= \left( \begin{array}{c c c c c}
 1& -1& 0 &0 &0 \\
 1& -1& 0& 0& 0 \\
 0& 0 &1 &-1& 0 \\
 0& 0 &1 &-1& 0 \\
 0& -1& 0 &-1& 1 
 \end{array}
\right) \enspace,
\end{equation}
the adjacency graph of which is represented in Figure~\ref{exempleG1}. 
\begin{figure}[htbp]
  \center
 \includegraphics[scale=0.6]{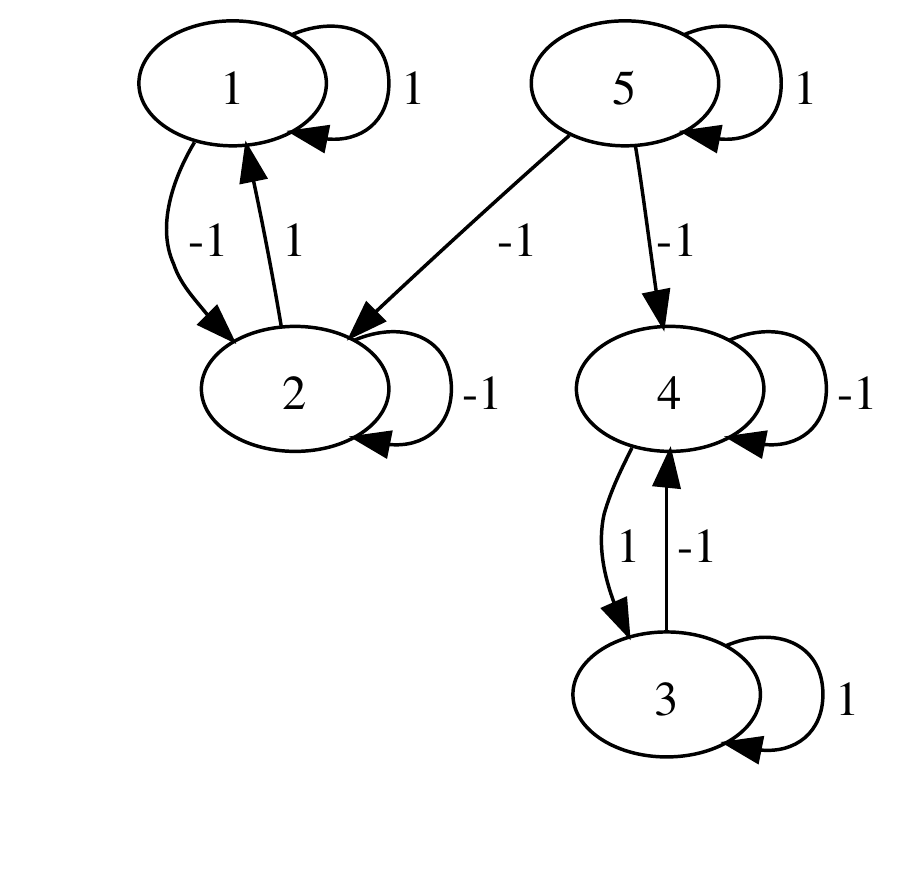}
\caption{Adjacency graph of $r$.}
\label{exempleG1}
\end{figure}

Let us fix the initial strategy $\sigmaiter 0$ for the
first player, such that 
$\sigmaiter 0(1)=2$,
$\sigmaiter 0(2)=2$,
$\sigmaiter 0(3)=4$,
$\sigmaiter 0(4)=4$,
$\sigmaiter 0(5)=2$. 
Then, the corresponding dynamic programming operator $f^{(\sigmaiter 0)}$ is given by
\begin{align*}
f_1^{(\sigmaiter 0)}(\valx) = f_2^{(\sigmaiter 0)}(\valx) & = \frac{1}{2} (-1+\valx_2+ \max(1+\valx_1,-1+\valx_2,\valx_3,\valx_4,\valx_5))\\
f_3^{(\sigmaiter 0)}(\valx) = f_4^{(\sigmaiter 0)}(\valx) & = \frac{1}{2} (-1+\valx_4+ \max(\valx_1,\valx_2,1+\valx_3,-1+\valx_4,\valx_5)) \\
f_5^{(\sigmaiter 0)}(\valx) & = \frac{1}{2} (-1+\valx_2+ \max(\valx_1,-1+\valx_2,\valx_3,-1+\valx_4,1+\valx_5))\enspace.
\end{align*}

In Step~\ref{step-init} of Algorithm~\ref{algo-main}, we compute
an invariant half-line of $f^{(\sigmaiter 0)}$ and obtain for instance
$w^{(0)}(t)=(\eta^{(0)},\valx^{(0)})$, with $v^{(0)}=(0,0,-0.5,-0.5,0)^T$
and $\eta^{(0)}=(0,0,0,0,0)^T$. Since $f(w^{(0)}(t))
<f^{(\sigmaiter 0)}(w^{(0)}(t))$, we need to improve the policy 
(Step~\ref{step-improve}) and get the unique solution (even without the
conservative policy):
$\sigmaiter 1(1)=2$,
$\sigmaiter 1(2)=2$,
$\sigmaiter 1(3)=4$,
$\sigmaiter 1(4)=4$,
$\sigmaiter 1(5)=4$. 
The corresponding operator is then 
given by~:
\begin{align*}
f_i^{(\sigmaiter 1)}&=f_i^{(\sigmaiter 0)} \qquad \ 1\le i \le 4,  \\
f_5^{(\sigmaiter 1)}(\valx)&= \frac{1}{2} (-1+\valx_4
+ \max(\valx_1,-1+\valx_2,\valx_3,-1+\valx_4,1+\valx_5)) \enspace.
\end{align*}
We compute then (in Step~\ref{step-val})  an invariant half-line
$(\eta^{(1)}, \vdemi )$ of $f^{(\sigmaiter 1)}$, and obtain 
$\eta^{(1)}=(0,0,0,0,0)^T$ and for
instance $\vdemi = (0,0,0.5,0.5,0.5)^T$.
Since $\eta^{(1)} = \eta^{(0)}$, the iteration is degenerate.

Hence the algorithm enters in Step~\ref{step-degenerate}.
Set $g := f^{(\sigmaiter 1)}$. We have to compute the
critical graph of $\bar g$, which is here equal to $g$, for instance by
applying Algorithm~\ref{algo-critique} to 
the sets $\sP_i$ defined in~\eqref{applycritical} with $u=\vdemi$.
They are given by $\sP_1 = \sP_2 = \{(0.5,0.5,0,0,0)\}$, $\sP_3
= \sP_4 = \{(0,0,0.5,0.5,0)\}$, $\sP_5 =
\{(0,0,0,0.5,0.5)\}$, then the critical graph of $g$ is equal to the final 
graph of $\sP_1\times\cdots\times \sP_5$, which is composed of two
strongly connected components with nodes $\{1,2\}$ and $\{3,4\}$.
Then, $v^{(1)}$ is the unique solution of:
\begin{equation}\nonumber \left\{ \begin{array}{l c l l}
   v_{5}^{(1)} &=& f_5^{(\sigmaiter 1)}(\valx)=
\frac 1 2 (- 1.5  + \max(0,-1,-0.5, -1.5, \valx_5^{(1)} + 1)) &  \\
   v_{\sx}^{(1)} &=& v_{\sx}^{(0)} & i \in \{1,2,3,4\} \enspace.
\end{array} \right. 
\end{equation}
We obtain $\valx^{(1)} = (0,0,-0.5,-0.5,-0.5)$ and since $f(w^{(1)}(t)) =
f^{(\sigmaiter 1)}(w^{(1)}(t))$, the algorithm stops.

However, if we do not treat the degenerate case by using
Step~\ref{step-degenerate}, and take for instance $\valx^{(1)} =\vdemi$,
we obtain $f(w^{(1)}(t)) <f^{(\sigmaiter 1)}(w^{(1)}(t))$, hence we
need to improve the strategy, and obtain the unique solution
$\sigmaiter 2= \sigmaiter 0$. This means that the algorithm cycle, showing the
necessity of Step~\ref{step-degenerate} in the policy iterations.

\section{Implementation and numerical results}\label{numerical}


The numerical results presented in this section were obtained with 
a slight modification of the policy iteration algorithm
Algorithm~\ref{algo-detailed}) and of its ingredients of
Section~\ref{sec-detailed}, all implemented in the C library {\sc
  PIGAMES}, see~\cite{detournayThese} for more information.  
All the tests of this section were performed on a single processor:
Intel(R) Xeon(R) W$3540$ - $2.93$GHz  with $8$Go of RAM. 

These slight modifications take into account the fact that 
(linear or nonlinear) equations may not be solved exactly (in exact arithmetics)
because of the errors generated by floating-point computations,
and also of the possible use of iterative methods instead of exact methods.
Let us explain them briefly.
For instance, the stopping criterion in Step~\ref{step-stop-detail} of
Algorithm~\ref{algo-detailed} can be replaced by a condition on the
residual of the mean payoff, $\fr{f} (\etaold) - \etaold$ and the
residual of the relative value, $\tfx{f}{\eta}(\vold) - \etaold - \vold$.
Here, we consider the infinity norm of the residual of the game 
that we define as $0.5 * (\|\fr{f} (\etaold) - \etaold\|_\infty +
\|\tfx{f}{\eta}(\vold) - \etaold - \vold\|_\infty)$, where $\|\cdot\|_\infty$
denotes the sup-norm. Then, we stop the policy iterations when
the infinity norm of the residual of the game is smaller than a given value
$\epsilon_{g} > 0$ or when the strategies cannot be improved. For the
tests of this section, we took $\epsilon_{g} = 10^{-12}$. We use the
same condition for the stopping criterion of the intern policy
iterations, that is for Step~\ref{stopHoward} of Algorithm~\ref{algo-Howard}
and Step~\ref{stop-DF} of Algorithm~\ref{algo-DF}. 
Moreover, the optimization problems in Step~\ref{step-improve-detail}
of Algorithm~\ref{algo-detailed} and Step~\ref{step-improvehoward} 
of Algorithms~\ref{algo-Howard} and~\ref{algo-DF},
are solved up to some precision. This means for instance that 
in Algorithm~\ref{algo-detailed}, one choose $\sigmanew\in\Am$  such that,
for all $i\in\X$,
\begin{equation}\label{sys-improve-mod}
\left\{ \begin{array}{l}
\displaystyle 
\tfxvia{F}{\etaold}{\vold}{\sx}{\sigmanew(i)}\leq\epsilon_{v}+ \underset{ a \in \tAm{\sx}{\etaold}{\epsilon_{\eta}}}{\operatorname{min}} \left\{ \tfxvia{F}{\etaold}{\vold}{\sx}{a} \right\}\; \text{with} \\
\displaystyle  \tAm{\sx}{\eta}{\epsilon}:=
\set{a \in \A_\sx}{ \frvia{F}{\eta}{\sx}{a} \leq \epsilon+
\fr{f}(\eta)]_{\sx}
}\\
\sigmanew(i) = \sigmaold(i)\text{ if }\sigmaold(i)\text{ is optimal,}  
\end{array}\right. 
\end{equation}
for some given $\epsilon_{\eta} $ and $\epsilon_{v} >0$.
Finally, the linear systems in Step~\ref{howard-step-2}
of Algorithms~\ref{algo-Howard} and~\ref{algo-DF} are solved up
to some precision, which may be lower bounded 
when the matrices of the systems are ill-conditioned.
See the appendix for details about the solution of these linear systems.

\subsection{Variations on tug of war and Richman games}
\label{section-numeric-richman}

We now present some numerical experiments on the variant of Richman
games defined in Section~\ref{exemple}, constructed on  random graphs. 
As in the previous section, we consider
directed graphs, with a set of nodes equal to $[n]$ and a set of arcs $E\subset
[n]^2$. The dynamic programming operator is the map $f$ defined 
in~\eqref{richman-game}, where the value $r_{ij}$ is the reward
of the arc $(i,j)\in E$.
In the tests of Figure~\ref{figure-richman-deg} to 
Figure~\ref{figure-richman-time1}, we chose random
sparse graphs with a number of nodes $n$ between $1000$ and
$50000$, and a number of outgoing arcs fixed to ten for each node.
The reward of each arc in $E$ has value one or zero, that is
$r_{ij} = 1$ or $0$. The arcs $(i,j)\in E$ and the associated
rewards $r_{ij}$ are chosen randomly (uniformly and independently). We start the
experiments with a sizer of graph (number of nodes) equal to $n=1000$, 
then we increase  the size by 
$1000$ until reaching $n=10000$, after we increase the size by $10000$
and end with a number of $50000$ nodes. For each size that we
consider, we made a sample of $500$ tests. The results of the
application of the policy iteration (Algorithm~\ref{algo-detailed} 
with the above modifications) on those games are presented
in Figures~\ref{figure-richman-deg} to~\ref{figure-richman-time1} and
are commented below.

Figure~\ref{figure-richman-deg} gives for each size $n$, 
and among the sample of $500$ tests, the number of
tests that encountered at least one strongly degenerate policy iteration for the first 
player. Hence, these games require 
the degenerate case issue presented in this paper, that is
Step~\ref{step-critical-graph} of Algorithm~\ref{algo-main} or~\ref{algo-detailed}. Moreover,
from the data of Figure~\ref{figure-richman-deg}, we
observe that approximately between $10$ and $15$ percent of the tests
have at least one strongly degenerate policy iteration for the first player.
\begin{figure}[htbp]
\centering
\resizebox{0.5\textwidth}{!}{
\input{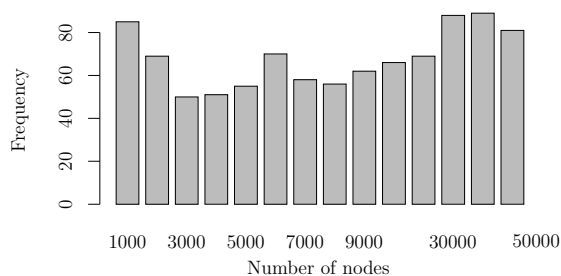}
}
\caption{Tests on a variant of Richman games constructed on random graphs.
The histogram shows for each size (number of nodes), the number of tests
  having at least one strongly degenerate policy iteration
  for the first player, among $500$ tests.}
\label{figure-richman-deg}
\end{figure}

In the table below we report the number of strongly degenerate iterations that
occur in the  global sample of tests.
\[ 
\begin{tabular*}{\textwidth}{@{\extracolsep{\fill}}r|ccccc}
Number of strongly degenerate iterations &   $0$&   $ 1 $& $  2 $ & $ 3 $& $  6 $\\
\hline
Number of tests & $6051$& $ 919$ & $ 28 $ & $ 1 $& $  1 $ \\ 
\end{tabular*}
\]
We observe that in general there is no more than one or two strongly degenerate
policy  iterations for our sample of tests. Note that in this section,
a strongly degenerate policy iteration is to be understood as a strongly degenerate
iteration for the first player only, that is for
Algorithm~\ref{algo-detailed}. 

In Figure~\ref{figure-richman-iter1}, we draw  on the left curves that
represent the number of policy iterations for the first player, that
is the number of iterations of Algorithm~\ref{algo-detailed}, as a
function of the size $n$ of the graph.  The dashed lines on top and
bottom are respectively the maximum and minimum value, over the sample
of $500$ tests, and the plain line is the average value, all as a 
function of the size.  We observe that the average number of first player's
policy iterations is almost constant as the size
increases. Using the same model of representation, we show on the right
of Figure~\ref{figure-richman-iter1} respectively
the maximum, average and minimum values for the total number of policy
iterations for the second player, that is the sum of the numbers of
iterations of Algorithm~\ref{algo-DF} when applied by
Algorithm~\ref{algo-detailed}, as a function of the size. We also
observe that these values do not vary a lot with the size.

\begin{figure}[htbp]
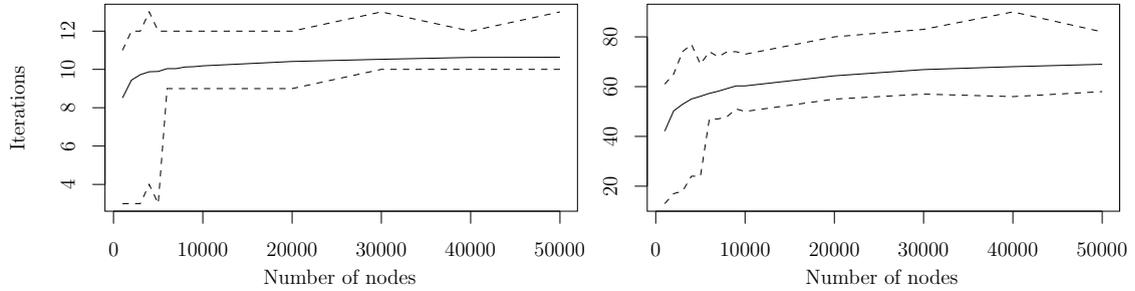

\centering
\resizebox{\textwidth}{!}{
\input{iter1_mean-tex}
\input{iterTot_mean-tex}
}
\caption{Tests on a variant of Richman games constructed on random graphs.
On the left,  the curves from top to bottom represent respectively the maximum, 
average, minimum number of first player's policy iterations, 
among $500$ tests, as a function of the number of nodes.
On the right, the curves represent the total number of second player's
 policy iterations.}
\label{figure-richman-iter1}
\end{figure}

In Figure~\ref{figure-richman-time1}, we present  on the left the total
cpu time (in seconds) needed by the policy iteration 
to find the solution of the game. 
As for the two previous figures, the curves from
top to bottom show respectively the maximum, average and minimum
values, over the sample of $500$ tests,
 as a function of the size of the graphs. Finally, on the right of 
Figure~\ref{figure-richman-time1}, we give also the
average of the total cpu time (in seconds) needed to solve the game
but we separated the tests with strongly degenerate policy iteration(s),
represented by the dashed line, from the non strongly degenerate ones,
represented by the plain curve. 
We observe that the average cpu time is somewhat greater for the tests
with strongly degenerate iteration(s). This is due to the additional
steps needed for degenerate iterations. Indeed, the cpu time 
of a degenerate iteration should be approximately the double of that of a
nondegenerate iteration, and  since the number of policy iterations is around 10
in the sample of tests, the average of the total cpu time of tests with
(strongly) degenerate iterations should be approximately $10$ percent
greater than that of the other tests.

\begin{figure}[htbp]
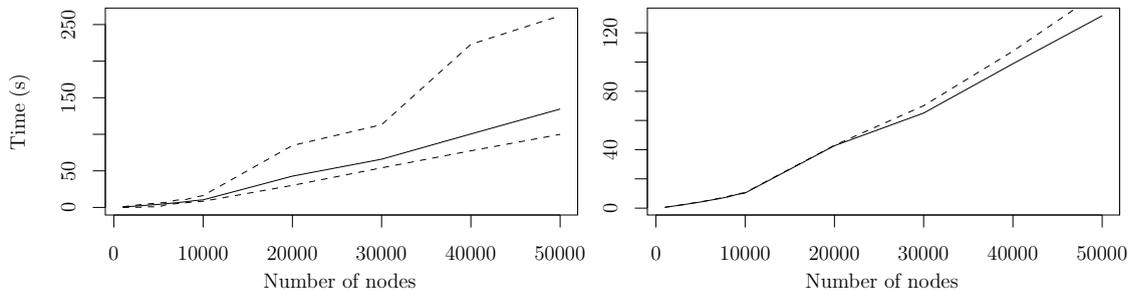

\centering
\resizebox{\textwidth}{!}{
\input{time_mean-tex}
\input{time1_onlymean-tex}
}
\caption{Tests on a variant of Richman games constructed on random graphs.
On the left, the curves from top to bottom represent 
  respectively the maximum, average and minimum values
of the total cpu time (in seconds) taken by the policy iteration algorithm,
among $500$ tests,  as a function of the number of nodes.
On the right, the dashed line represents the average among the
  tests that encounter at least one strongly degenerate policy iteration for
  the first player, whereas the plain line represents the average  
  among the other tests.}
\label{figure-richman-time1}
\end{figure}

In addition, in Table~\ref{table-richman-large-graphs}, we give 
numerical results for ten tests of the variant of Richman game, 
constructed on random large graphs with a number of nodes 
between $10^5$ and $10^6$. 
We observe that the number of iterations are of the same
order as for the previous sample of tests presented in
Figure~\ref{figure-richman-iter1}.

\begin{table}[htbp]
\centering
\caption{Numerical results on a variant of Richman game constructed on random
  large graphs.} 
\label{table-richman-large-graphs}
\begin{tabular*}{\textwidth}{@{\extracolsep{\fill}} cccccc}
\hline 
 Number of & Iterations of  & Total number  & Strongly degenerate & Infinity norm  & CPU time \\ 
nodes  &  first player  &  of iterations  & iterations & of residual & (s)  \\ 
\hline 
$100000$  & $12$  & $78$  & $1$  & $1.44e-14$  & $3.24e+02$   \\ 
$200000$  & $12$  & $74$  & $0$  & $7.44e-15$  & $7.90e+02$   \\ 
$300000$  & $11$  & $82$  & $0$  & $1.33e-15$  & $9.38e+02$   \\ 
$400000$  & $12$  & $82$  & $1$  & $8.55e-15$  & $1.42e+03$   \\ 
$500000$  & $12$  & $77$  & $1$  & $2.00e-14$  & $2.16e+03$   \\ 
$600000$  & $12$  & $77$  & $0$  & $8.66e-15$  & $2.61e+03$   \\ 
$700000$  & $11$  & $85$  & $0$  & $3.02e-14$  & $2.61e+03$   \\ 
$800000$  & $12$  & $81$  & $1$  & $4.82e-14$  & $6.79e+03$   \\ 
$900000$  & $12$  & $79$  & $1$  & $1.27e-14$  & $4.17e+03$   \\ 
$1000000$ & $12$  & $90$  & $1$  & $3.33e-15$  & $1.96e+04$  \\ 
 \hline 
\end{tabular*} 
\end{table}

\subsection{Pursuit games}

\label{section-numeric-pursuit}

We consider now a pursuit evasion game with two players~: a pursuer
and an evader. The evader wants to maximize the distance between him
and the pursuer and the pursuer has the opposite objective. See for instance
\cite{BaFaSo94,BaFaSo99,LiCruz08} for a complete description of general
pursuit games.
To simplify the model, we consider as state of the
game, the distance between the two players. Then, the state of the
game is given by $x = x_P - x_E$ where $x_P$ is the position of the
pursuer and $x_E$ the position of the evader. We also restrict the
state $x$ to stay in a unit square centered in the $0$-position, that
is $x \in X := [-0.5,0.5] \times [-0.5,0.5]$.  
At each time of the game, the reward for the evader is the
euclidean square norm of the distance between the two players,
i.e. $\norm{x}_2^2$.  
Such a game is a special class of differential game, the dynamic
programming equation of which is an Isaacs partial differential 
equation.
Under our simplifications and assumptions, the Hamiltonian of this
equation is given by~:
\begin{equation}\label{hamiltonian} 
H(x,p)= \max_{a\in \A(x)} (a\cdot p)+ \min_{b\in \B(x)} (b\cdot p )+\|x\|_2^2
\qquad\forall x \in X,  p\in\R^2\enspace ,
\end{equation}
meaning that in the case of a finite horizon problem, the
Isaacs equation would be given, at least formally (but also in
the viscosity sense) by~:
\[ -\frac{\partial v}{\partial t} + H(x, \nabla v(x))=0 \qquad x \in  X
\enspace.\]
Here $A(x)$ and $B(x)$ are the sets of possible directions for 
the evader and the pursuer respectively, when the state is equal to $x\in X$.
On the boundary, we consider that only actions
keeping the state of the game in the domain $X$ are allowed,
hence the above equation has to be satisfied until the boundary.

We shall consider this differential game with a mean-payoff criterion
and the above reward.
This means that the analogous to System~\eqref{DPsyst2P1} is the following
system of Isaacs equations~:
\begin{equation}\label{eq-BI-CM} \left\{ \begin{array}{rl}
\displaystyle 
\max_{a\in \A(x)} (a\cdot\nabla \eta (x))+ \min_{b\in \B(x)}
  (b\cdot\nabla \eta (x)) = 0\enspace, & x \in  X\enspace, 
 \\[1em]
\displaystyle
- \eta (x)  +\max_{a\in \tinf\A_\eta(x)} (a\cdot\nabla \valx (x) )+
\min_{b\in \tinf\B_\eta (x)}
  (b\cdot\nabla \valx (x) )+\|x\|_2^2=0\enspace,  & x \in X\enspace, 
\end{array} \right. 
\end{equation}
where
\begin{align*}
\tinf\A_\eta(x) :=& \underset{a \in \A(x)}{\operatorname{argmax}} \;\left(
a\cdot\nabla \eta (x)  \right)\enspace, \\
\tinf\B_\eta(x) :=& \underset{b \in \B(x)}{\operatorname{argmin}} \; \left(
b\cdot\nabla \eta (x) \right) \enspace.
\end{align*}
In classical pursuit-evasion games, such as in
\cite{BaFaSo99}, the reward is constant
and the value function is defined as the time (or the exponential
of the opposite of the time) for the
pursuer to capture the evader, then the value function is solution of
the stationary Isaacs equation that is~\eqref{eq-BI-CM} with $\eta\equiv 0$,
corresponding to the above Hamiltonian with $1$ instead of $\|x\|_2^2$.
In that case, the value is infinite
when the pursuer's speed is smaller than the evader's speed, and 
it is would be difficult to compute an optimal strategy using
Isaacs equation.
Here by considering a mean-payoff problem, we may solve the problem
even when pursuer's speed is smaller than the evader's speed,
as we shall see below. Note that one may have kept the reward equal to 
$1$, but then the optimal value $\eta$ would have given less
information.

A monotone discretization, for instance a 
finite difference discretization scheme (see~\cite{KuDu92}), 
of System~\eqref{eq-BI-CM} 
yields to System~\eqref{DPsyst2P1} for the dynamic programming
operator $f$ of a discrete time and finite state space game,
which then may be solved using our policy iteration 
Algorithm~\ref{algo-detailed}.

In our tests, the domain $X$ is discretized in each directions with a
constant step size $h$. Then the two players of the discrete game 
are moving on the discretized
nodes of the domain, similarly to the moves in a chess game.
We assume also that the evader cannot move when the euclidean norm of the
relative distance between him 
and the pursuer is less than $0.1$, i.e when $x \in
\Bl((0,0);0.1)$. We shall call the evader, the mouse and his set of
possible actions at each state of the game will given by~: 
\begin{equation*} 
\A(x) := \begin{cases} \{(a_1, a_2)\,|\, a_l \in \{0, 1, -1\}, \ \ l =
1,2\} & x \in \inte{X} \setminus \Bl((0,0);0.1) \\
\{(0, 0)\} & x \in \Bl((0,0);0.1) \enspace,
\end{cases}
\end{equation*}
where $\inte{X}$ denotes the interior of $X$. 
The pursuer, that we shall call the cat, has the following set of
possible actions~: 
\begin{equation*} 
\B(x) := \{(b_1, b_2)\,|\, b_l \in \{0, \bar b, -\bar b\}, \ \ l = 1,2\} \quad x
\in \inte{X} \enspace,
\end{equation*}
where $\bar b$ is a positive real constant and represents the speed of the cat.
Moreover, on the boundary of $X$, the sets $\A(x)$ and $\B(x)$ are 
restricted to avoid actions that bring the state out of $X$.

Numerical results for this game are presented in
Table~\ref{table-CM10001} when $\bar b =0.999$ , $\bar b =1$ and
$\bar b =1.001$ respectively.
 Note that the solution of the
discretization of Equation~\eqref{eq-BI-CM} may differ from the
solution of the continuous equation.
We observe that for $\bar b = 0.999$  and $\bar b =1.001$, we have a
strongly degenerate iteration for the first player on the last iteration. 

The optimal actions for the discretized problem with $\bar b =0.999$
are represented in Figure~\ref{betaCM0999}, at each node of the
grid: the actions of the mouse are on the left, and that of the cat are 
on the right. The optimal
actions are approximately the same for the two other values of $\bar b$.
When $\bar b =0.999$, the speed of the cat is smaller than the
speed of the mouse ($=1$). The numerical results for the discretized
game give an optimal mean-payoff $\eta$ such that
 $\eta (x) = 0.492$ for $x \in X \setminus \Bl((0,0);0.1)$ and
$\eta(x) = 0$ for $x \in \Bl((0,0);0.1)$. This means that the cat
cannot catch the mouse when their starting positions are not too
close and the mouse can keep almost the maximum distance between
them. The relative value is represented  on the left of Figure~\ref{vCM1001}.
When $\bar b =1$, the speeds of the cat and the mouse are equal.
The numerical results for the discretized
game give a relative value $\valx$
approximately equal to zero for every starting point and 
an optimal mean-payoff $\eta(x) \approx \norm{x}^2_2$, meaning
that the cat and mouse keep the same initial distance all along the game.
In the last example, the speed of the cat $\bar b =1.001$ is greater than
that  of the mouse ($=1$).
The numerical results for the discretized
game give an optimal mean-payoff $\eta$ close to zero.
The relative value $\valx$ is given on the right of Figure~\ref{vCM1001}.
In this case, the cat catches the mouse. 

\begin{table} 
\center
 \caption{Numerical results for the mouse and cat example where $\bar
 b$ is the speed of the cat. The second column is the index of the iteration
 on the cat's policies and the third column is the corresponding
total number of iterations on the mouse's policies. The last column
 indicates if the cat's policy iteration is strongly degenerate. Number of discretization nodes: $257 \times 257$.}  
\label{table-CM10001}
 \begin{tabular*} {\textwidth}{@{\extracolsep{\fill}} cccccc}
\\[-0.8em] \hline \\[-0.8em]
$\bar b$ & Cat policy & Number of mouse  & Infinite norm of
  & CPU time  & Strongly degenerate\\
&    iteration index &  policy iterations  & residual & (s) &  iteration \\
 \hline \\[-0.8em]
$0.999$ & $ 1 $ & $ 2 $ & $ 1.25e-06 $  & $ 2.59e+01 $ &  $0$ \\ 
& $ 2 $ & $ 1 $ & $ 9.93e-12 $  & $ 3.95e+01 $ &  $0$ \\ 
& $ 3 $ & $ 1 $ & $ 5.68e-14 $  & $ 7.35e+02 $ & $1$ \\ 
 \hline \\[-0.8em]
$1$ & $ 1 $ & $ 2 $ & $ 1.25e-06 $  & $ 2.60e+01 $ & $0$  \\ 
& $ 2 $ & $ 1 $ & $ 3.39e-21 $  & $ 3.84e+01 $ & $0$  \\ 
 \hline \\[-0.8em]
$1.001$ & $ 1 $ & $ 2 $ & $ 1.25e-06 $  & $ 2.59e+01 $ & $0$  \\ 
& $ 2 $ & $ 1 $ & $ 1.96e-14 $  & $ 6.51e+02 $ &  $1$ \\ 
 \hline 
\end{tabular*} 
 \end{table}

\begin{figure}[htbp] 
\center
\resizebox{\textwidth}{!}{
\includegraphics[scale=0.1]{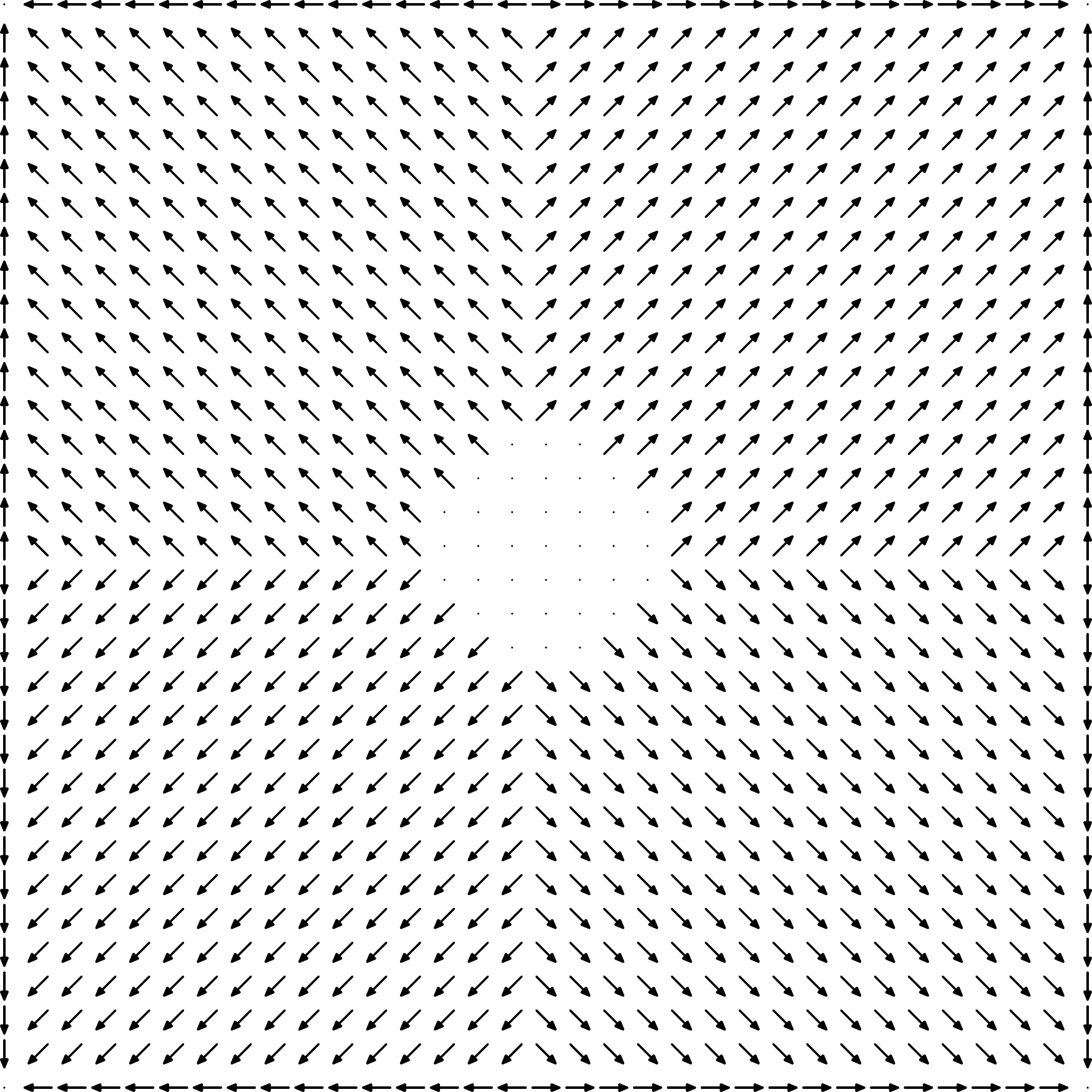}
\ 
\includegraphics[scale=0.1]{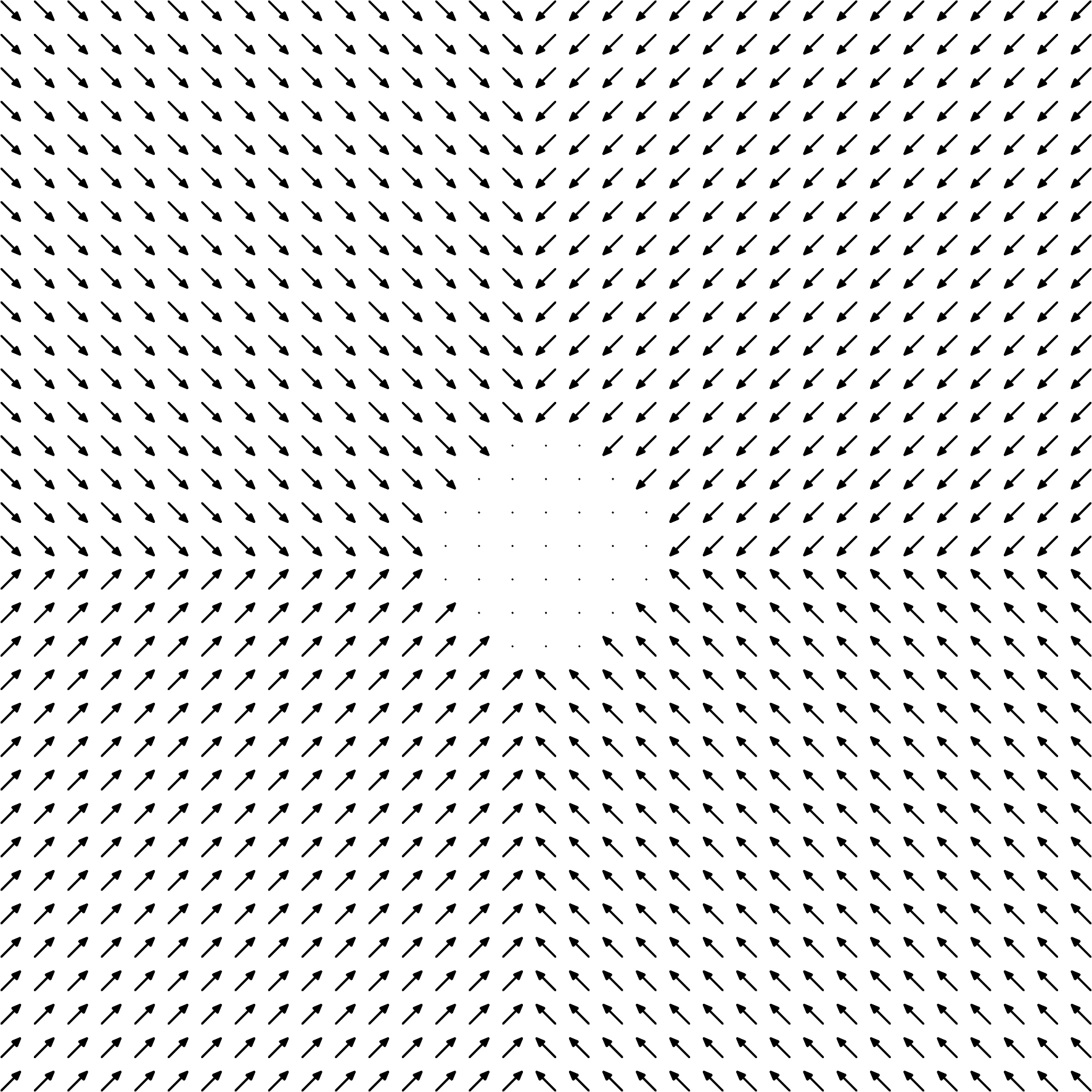}
}
\caption{Optimal actions for the mouse on the left and for the cat on
  the right.}
\label{betaCM0999}
\end{figure}

\begin{figure}[htbp]
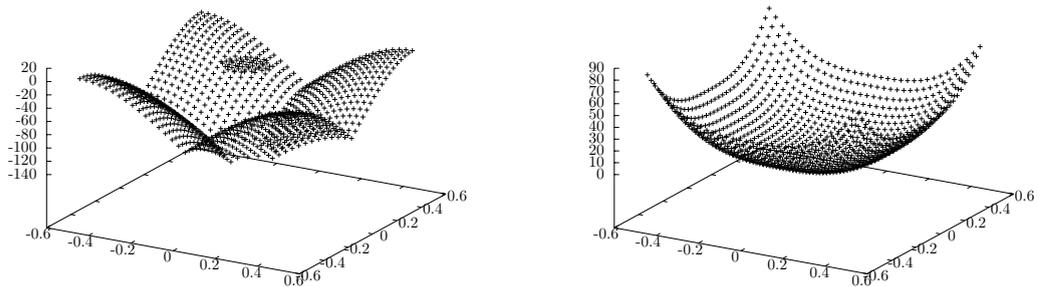
 
\resizebox{\textwidth}{!}{
\input{CM0999_v-tex}
\input{CM10001_v-tex}
}
\caption{Relative value $\valx$ for the mouse and cat game when the
  speed of the mouse is one, the speed of the cat equals $0.999$ on
  the left and $1.001$ on the right.}  
\label{vCM1001}
\end{figure}

\bibliography{references}
\bibliographystyle{alpha}

\appendix

\section{Details of implementation of Policy Iteration for multichain one player games}

We explain in more details here why System~\eqref{DF-LS} has a unique solution
and is selecting one special solution of 
System~\eqref{DP-syst-step-val-oneplayer}
with $k$ instead of $k+1$, and how it is solved practically
(see also~\cite{Denardo-Fox68,puterman}).

Recall that System~\eqref{DP-syst-step-val-oneplayer} is 
of the form~\eqref{LSDF} rewritten here:
\[\left\{ \begin{array}{r  c l}
  \eta &=& P \, \eta \\
  \eta + \valx &=& P \, \valx + r\enspace,
\end{array}
\right.\]
where $r=r^{(\delta)}\in \RX$ and $P=P^{(\delta)}$ is a stochastic matrix, with
$\delta=\deltanew$.
Moreover, System~\eqref{DF-LS} corresponds to 
\begin{equation}\label{DF-LS-gen}
\left\{ \begin{array}{r  c  l l}
  \eta_{\sx} &=& (P \eta )_{\sx} & \sx \in [n]  \setminus S\enspace,\\
  \eta_{\sx} + v_{\sx} &=& (P v)_{\sx} +r_\sx  & \sx \in \X\enspace, \\
  v_{\sx} &=& 0 & \sx \in S \enspace, \\
\end{array}
\right.
\end{equation}
where $r$ and $P$ are as before but with $\delta=\deltaold$, and 
where $S$ is composed of minimal indices $i_F$ of each final
classes $F$ of $P$.
Then one need to show that System~\ref{DF-LS-gen} has a unique solution
and is selecting one special solution of System~\eqref{LSDF}.

First, for all final classes $F$ of $P$,
$P_{FF}$ is an irreducible Markov matrix, hence the equation
$\eta_F=(P\eta)_F$, which is equivalent to $\eta_F=P_{FF} \eta_F$, is also
equivalent to the condition that $\eta_{\sx} = \eta_j$ for all $i,j\in F$.
Moreover, $P_{FF}$ has a unique stationary (or invariant) probability measure  $\staMC_F$,
that is a row probability vector solution of $\staMC_F = \staMC_F P_{FF}$, and
this vector has strictly positive coordinates.
This implies that one can eliminate, for each final class $F$ of $P$, 
one equation with index $i$ in $F$
in the equation $\eta=P\eta$, without changing the set of solutions.
Hence, any solution of System~\ref{DF-LS-gen} is also solution of~\eqref{LSDF}.

Second, denote by $\sF$ the  union of final classes, and by 
$\sT$ the union of transient classes, that is the complement in $\X$ of $\sF$.
Then, $w$ is in the  kernel of $I-P$ if, and only if, it 
satisfies $w_F=P_{FF}w_F$, for all final classes $F$ of $P$ and
$w_\sT=P_{\sT\sT}w_\sT+P_{\sT\sF} w_\sF$.
As said before the first equations are equivalent to the
conditions $w_{\sx} = w_j$ for all $i,j\in F$.
Since $P_{\sT\sT}$  has transient classes only, it has a 
spectral radius strictly less than one, which implies that given the vectors
$w_F$ for all final classes $F$, the equation
$w_\sT=P_{\sT\sT}w_\sT+P_{\sT\sF} w_\sF$ has a unique solution
$w_\sT$.
Hence, the dimension of the kernel of $I-P$ is equal to the number of 
final classes of $P$,
and any element of this kernel which has one coordinate 
$i\in F$ equal to zero for each final classes $F$ of $P$, 
has all its coordinates equal to zero.
This implies that given $\eta\in\RX$, the solution $\valx$ of 
System~\eqref{DF-LS-gen} is unique if it exists
(the difference between two such solutions satisfies the above conditions).
This also implies that the codimension of the image of $I-P$
is equal to the number of final classes.
Hence, the image of $I-P$ is exactly equal to the set of vectors $\eta\in\RX$
such that $ \staMC_F \eta_F= 0$, for all final classes $F$ of $P$.
A vector $\eta\in\RX$ is such that System~\eqref{DF-LS-gen} has a solution
$\valx\in\RX$ 
if and only if $\eta=P \eta$ and $\eta-r$ is in the image of $I-P$.
These conditions are equivalent to the three conditions
$\eta_{\sx} = \eta_j$ for all $i,j\in F$, for all final classes $F$,
$\eta_\sT=P_{\sT\sT}\eta_\sT+P_{\sT\sF} \eta_\sF$,
and $\staMC_F \eta_F=\staMC_F r_F$, for all final classes $F$.
The first and third conditions together are equivalent to 
$\eta_{\sx} =\staMC_F r_F$, for all $\sx\in F$, and all final classes $F$
of $P$, which gives a unique solution $\eta_\sF$. Since 
the second one has a unique solution $\eta_\sT$, given $\eta_\sF$,
we get that there is a unique vector $\eta\in\RX$ such that
System~\eqref{DF-LS-gen} has a solution $\valx\in\RX$.
In conclusion, System~\eqref{DF-LS-gen} has a unique
solution $(\eta,\valx)$, which finishes the proof what we wanted to show.

One may try to solve System~\eqref{DF-LS-gen} by using usual LU methods,
however when $P$ is not irreducible, such a method is not robust.
We rather use the decomposition of $P$ in classes, and
the previous properties.
In particular, since $P_{\sT\sT}$ has a spectral radius strictly less than $1$,
one can compute $(\eta,\valx)$ solution of System~\eqref{DF-LS-gen}
 by first computing 
$\eta_F$ and $\valx_F$, for all final classes $F$ of $P$, then computing 
successively $\eta_\sT$ and $\valx_\sT$
which are respectively fixed points of contracting affine systems with 
tangent linear operator $P_{\sT\sT}$:
\begin{equation*}
\left\{ \begin{array}{r  c l}
  \eta_\sT &=& P_{\sT\sT} \,\eta_\sT + P_{\sT\sF} \, \eta_\sF  \\
 \valx_\sT &=& P_{\sT\sT} \, \valx_\sT + P_{\sT\sF} \valx_\sF + r_\sT -  \eta_\sT 
\enspace.
\end{array}
\right.
\end{equation*}

There exists two ways to compute $\eta_F$ and $\valx_F$. 
One is to compute the stationary probability $\staMC_F$
to determine $\eta_F$ by $\eta_{\sx} =\staMC_F r_F$, for all $\sx\in F$,
and then solve the following system with unknown $\valx_F\in\R^F$~:
\[
\valx_F = P_{FF} \, \valx_F + r_F -\eta_{F}  \enspace,
\]
by eliminating one equation (since one equation is redundant)
with index $j\in F$,
and adding the condition $\valx_i=0$ for one element $i\in F$.
Another method is to
consider $\eta_F$ as constant, say $\eta_\sx = \bar \eta$ for $\sx\in F$,
and solve the system with unknowns $\bar \eta\in\R$ and $\valx\in\R^F$~: 
\[
\bar \eta +  \valx_\sx = \sum_{\sy \in F} P_{\sx \sy} \, \valx_\sy +
r_\sx, \quad \sx \in F \enspace,
\]
by adding  the condition $\valx_i=0$ for one element $i\in F$.
In our algorithm,  we choose the index $i=j\in F$ to be the minimal index of 
$F$ (for a fixed total ordering of nodes).
This method gives the following algorithm to solve System~\eqref{DF-LS-gen}.

\begin{algo}[Solution of System~\eqref{DF-LS-gen}]
\label{algo-DFstep1} 
Decompose the matrix $P$ into irreducible classes and permute nodes
without changing the order in each class, such
that $P$ takes the following form~:
\begin{equation} \nonumber
P=\left(\begin{array}{ccccc}
P_{11} & P_{12} & \ldots   & \ldots & P_{1m}\\
0 & P_{22} & \ldots &   \ldots & P_{2m}\\
\vdots & \vdots &  \vdots & \vdots & \vdots\\
0 & \ldots &  \ldots & P_{m-1,m-1} & P_{m-1,m}\\
0 & \ldots &  \ldots & 0 & P_{mm}
\end{array}\right)
\end{equation}
where $m$ denotes the number of irreducible classes and $P_{II}$ are
square irreducible submatrices of $P$, for $I = 1 , \dots, m$. Note that, the
class corresponding to a submatrix $P_{II}$ is final if and only if
the submatrices $P_{IJ}$ are all null for all $J\ne I$.

\noindent For each class $I$ from $m$ to $1$, do the following~:
\begin{enumerate} [Step 1.]
\item \label{algo-DFstep1-step1} If $I$ corresponds to a final class,
that is $P_{II}$ is a stochastic matrix,
  do one of the two following sequences of operations~:
  \begin{enumerate}[A.]
    \item \begin{enumerate}[(a)]
      \item Find the stationary probability $\pi_{I}$ of $P_{II}$:
 $\pi_{I} \, P_{II}  =  \pi_{I} \enspace, $
      \item Set $\bar \eta  = \pi_{I} \, r_I$ and $\eta_{\sx} = \bar \eta
      \quad \sx \in {I} \enspace,$
      \item Solve the system with unknown $v_I\in\R^I$~:
	\begin{equation}\label{eq-LS-sing-algo7}
	  \left\{ \begin{array}{r  c l l}
	   \valx_\sx & = &  \sum_{\sy \in I} P_{\sx\sy} \valx_\sy + r_\sx - \bar \eta  & \sx
	    \in I \setminus S \enspace, \\
	    \valx_{\sx} &=& 0  & \sx \in S \cap I \enspace,
	  \end{array}
	  \right.
	\end{equation}
      \end{enumerate}
    \item Solve the system with unknowns $v_I\in\R^I$ and $\bar \eta\in\R$~:
      \begin{equation*}
	\left\{ \begin{array}{r  c l l}
	  \bar \eta + \valx_\sx & = &  \sum_{\sy \in I} P_{\sx\sy} \valx_\sy + r_\sx  & \sx
	  \in I \enspace, \\
	  \valx_{\sx} &=& 0  & \sx \in S \cap I \enspace,
	\end{array}
	\right.
      \end{equation*}
      and set $\eta_{\sx}  = \bar \eta \quad  \sx\in {I} \enspace,$
  \end{enumerate}
\item \label{algo-DFstep1-step2} if $I$ corresponds to a transient class,
that is if $P_{II}$ is a strictly submarkovian matrix.
 do the following steps~:
\begin{enumerate}
\item compute $\eta_I$ solution of the following system~:
\[
\eta_{I}  = P_{II} \eta_{I} + \sum_{J>I} P_{IJ} \eta_{J} \]
\item compute $\valx_I$ solution of the following system~:
\[
 \valx_{I} = P_{II} \valx_{I} + \sum_{J>I} P_{IJ} \valx_{J} + r_{I} - \eta_{I}\enspace .
\]
\end{enumerate}
\end{enumerate}

\end{algo}

In our numerical experiments, the
linear system~\eqref{DF-LS} at each intern policy iteration
is solved by using Algorithm~\ref{algo-DFstep1}. 
For the numerical experiments
of Section~\ref{section-numeric-richman}, on each final class, we used
a SOR iterative solver to find the stationary probability $\pi_I$ and also
to compute the corresponding $v_I$ in method A in
Step~\ref{algo-DFstep1-step1} of Algorithm~\ref{algo-DFstep1}. 
For the transient class, we used the LU solver of the package~\cite{superlu99}.

The {Successive Over-Relaxation (SOR)} method is an iterative scheme
that belongs to the class of splitting methods or relation methods,
see for instance~\cite{plemmons}. It is derived from the
Gauss-Seidel relaxation scheme. Consider a matrix $A\in\R^{n\times n}$
such that $A = D - L - U$ where $D$, $-L$, $-U$ are respectively the
diagonal, lower and upper triangular part of $A$. The SOR smoothing
operator is defined by $S_w = M^{-1} N$ where $M= D \,-\,w
L$ and $N = [(1-w)D\,+\,w U]$ for $0 <w < 2$.

Consider the irreducible stochastic matrix $P_{II}\in\R^{I\times 
I}$ and decompose  $\Idt - P_{II}^T = D - L -U$ where $\Idt$ is the
identity matrix of $\R^{I \times I}$. Starting from an
initial positive approximation $\pi^{(0)}\in\R^I$, a SOR smoothing
step to find the stationary probability of $P_{II}$ is given by~:   
\begin{equation*}
\begin{aligned}
  \pi^{(k)} &\, = \, S_w \,\pi^{(k-1)} \\
  \pi^{(k+1)} &\, = \, \frac{\pi^{(k)}} {\left( \sum_{i\in
  [n]}\pi^{(k)}_i \right)} \enspace\cdot
\end{aligned}
\end{equation*}
The sequence $(\pi^{(2k)})_{k \ge   0}$ converges to the transpose of the
stationary probability of $P_{II}$ when the limit $\lim_{k\rightarrow \infty}
S_w^{(k)}$ exists, see~\cite{plemmons} for more details.
To solve Equation~\eqref{eq-LS-sing-algo7}, decompose $\Idt - P_{II}
= D - L - U$. Then, starting from an initial 
approximation $v^{(0)}\in\R^{I}$, a SOR smoothing step consists in~:     
\begin{align*}
 v^{(k)} &\, = \, (\Idt - {\bf 1} \mu) \, (S_w \,v^{(k-1)}
 \,+ \, M^{-1}(r_I - \eta_I)) 
\end{align*}
where ${\bf 1} = (1 \dots 1)^T \in \R^I$, and $\mu \in \R^I$ is a row vector
such that $\mu_{i}=1$ for $i\in S\cap I$, and $\mu_i =0$ otherwise. 
The sequence $(v^{(k)})_{k\ge 0}$ converges to the  
solution of Equation~\eqref{eq-LS-sing-algo7} 
when the $\lim_{k\rightarrow \infty} S_w^{(k)}$ exists,
see~\cite{plemmons} for more details.

For the numerical tests of Section~\ref{section-numeric-pursuit}, we
used the LU solver of the package~\cite{superlu99} in both cases.

\end{document}